\newtheorem{theorem}{Theorem}[section]
\newtheorem{proposition}[theorem]{Proposition}
\newtheorem{lemma}[theorem]{Lemma}
\newtheorem{corollary}[theorem]{Corollary}
\newtheorem{proof}{\textmd{\textit{Proof.}}}
\newtheorem{remark}[theorem]{Remark}
\newtheorem{example}[theorem]{Example}
\newtheorem{definition}[theorem]{Definition}
\newcommand{\qedd}{\hfill \Box}
\newcommand{\R}{\ensuremath{\mathbb{R}}}
\newcommand{\C}{\ensuremath{\mathbb{C}}}
\newcommand{\Sph}{\ensuremath{\mathbb{S}}}
\newcommand{\E}{\ensuremath{\mathbb{E}}}
\title{Kropina metrics and Zermelo navigation on Riemannian manifolds
\footnote{
Mathematics Subject Classification (2010)\,:\,53C60, 53C22.}
\footnote{
Keywords: Kropina spaces, flag curvature, Killing vector fields, Riemannian isometries.}
}
\author{Ryozo YOSHIKAWA $\cdot$ Sorin V. SABAU}
\date{}
\begin{document}


\maketitle
\begin{abstract}
The present paper studies globally defined Kropina metrics as solutions of the Zermelo's navigation problem. Moreover, we characterize the Kropina metrics of constant flag curvature showing that up to local isometry, there are only two model spaces of them: the Euclidean space and the odd-dimensional spheres.

\end{abstract}

\section{Introduction.}\label{intro}

Finsler metrics were introduced in order to generalize the Riemannian ones in the sense that the metric should depend not only on the point, but also on the direction. This generalization leads to quite complex computations if one wants to find conditions for a Finsler manifold to be of constant flag curvature making a classification of Finslerian spaces of constant flag curvature extremely difficult.

One of the few complete classifications of Finsler manifolds of constant flag curvature was done for a particular class of metrics, the so-called {\it Randers metrics} $F=\alpha+\beta$ (see \cite{BCS}). They belong to a large class of Finsler metrics, the {\it ($\alpha$, $\beta$)-metrics}, where $\alpha=\sqrt{a_{ij}(x)y^iy^j}$ is a Riemannian metric and $\beta=b_i(x)y^i$  a linear 1-form on $TM$. Here we denote by $M$ an $n(\ge 2)$-dimensional differential manifold and by $TM$ its tangent bundle with local coordinates $x$ and $(x, y)$, respectively.

Another remarkable class of Finsler manifolds with ($\alpha$, $\beta$)-metrics are the so-called {\it Kropina metrics} $F=\alpha^2/\beta$ introduced in \cite{K} (one can see in \cite{AIM} an interesting discussion on the role of this metric in thermodynamics). 

Being computationally friendly, Randers spaces were recently studied by many researchers. We mention 
Zhongmin Shen who regarded Randers spaces from a new perspective, identifying these metrics with the solution of the Zermelo's navigation problem on some Riemannian space in \cite{Sh1} and characterized a Randers metric  by a new Riemannian metric $h$ and a vector field $W$ on $(M, h)$ with $|W|<1$. Moreover, in 2004, David Bao, Colleen Robles and Zhongmin Shen have obtained the necessary and sufficient conditions for a Randers space to be of constant flag curvature and completed the classification of strongly convex Randers metrics of constant flag curvature (\cite{BRS}).
Finally, in 2005,  Colleen Robles investigated the geodesics of a Randers space of constant flag curvature (\cite{R}) obtaining the characterization of long geodesics, conjugate points and cut locus of such a metric.

On the other hand, the results on Kropina metrics are quite few. In 1978, Choko Shibata studied some basic local geometrical properties of Kropina spaces (\cite{Shi}).
In 1991, Makoto Matsumoto obtained a set of necessary and sufficient conditions for a Kropina space to be of constant (flag) curvature (\cite{M2}). Based on these results, the first author together with Katsumi Okubo characterized a Kropina metric $F=\alpha^2/\beta$ by means of a new Riemannian metric $h$ and a  unit length vector field $W$ (\cite{YO1}) obtaining a minimal set of necessary and sufficient local conditions for a Kropina space to be of constant curvature (\cite{YO1} and \cite{YO2}). 

Since topology is precisely the mathematical field that allows the passage from local to global, in the present paper we will study the existence of different types of Kropina structures taking into account the topological restrictions involved.

Especially, we are concerned with the existence of such structures globally defined putting in evidence the characteristics of Kropina metrics, namely the fact that unlikely Randers metrics, they cannot be defined on the entire tangent bundle, but only on a conic domain of it. We have found the concept of conic Finsler metrics introduced and studied by M. A. Javaloyes and M. S\'anchez (\cite{JS}) extremely useful. 

Moreover, we will classify globally defined Kropina metrics of constant flag curvature pointing out the differences with the Randers spaces of constant flag curvature.

Here is the content of our paper. 

In Section \ref{section 1} we characterize a class of Kropina metrics, the U-Kropina metrics defined as Kropina metrics  with a unit vector field, as a solution of the Zermelo's navigation problem on Riemannian manifolds, other than those found and studied in \cite{BRS}. We were led in this way to the description of these metrics as conic Finsler metrics (Section \ref{section 2}), a relatively new notion introduced in \cite{JS}. In Section \ref{section 3} we apply the results in \cite{YO2} obtaining a characterization theorem for U-Kropina metrics with constant flag curvature (we call these metrics CC-Kropina metrics). 

However, since our Kropina metrics are defined globally on the manifold $M$, strong topological restrictions to the existence of such structures appear. We study in Sections \ref{section 4} and \ref{section 5} the existence of three globally defined Kropina structures: the U-Kropina, UK-Kropina and CC-Kropina metrics, respectively (see Definition \ref{definition 5.1} for the precise definitions). In particular, we show that CC-Kropina metrics can be constructed only on odd dimensional spheres or (locally) Euclidean spaces (see Theorem \ref{Theorem 4.20}). Moreover, such Kropina metrics are unique up to isometry (Section \ref{section 6}).

Finally, we study the projective flatness of U-Kropina metrics (Section \ref{section 7}) showing that a Beltrami type theorem holds only for the flat case. 

The paper is illustrated with several examples.

Other topics as geodesics behavior, conjugate points, cut points, etc. will be studied in a forthcoming paper. 

\acknowledgement{ We thank to Professors K. Okubo and H. Shimada for encouraging us in the research of this topic.}

\section{Another solution to the Zermelo's navigation problem.}\label{section 1}

In 1931, E. Zermelo studied the following problem (see \cite{C}):

{\it Suppose a ship sails the sea and a wind comes up. How must the captain steer the ship in order to reach a given destination in the shortest time?
}

The problem was solved by Zermelo himself for the Euclidean flat plane and by D. Bao, C. Robles and Z. Shen (\cite{BRS}) in the case when the sea is a Riemannian manifold $(M,h)$ under the assumption that the wind $W$ is a time-independent mild breeze, i.e. $h(W,W)<1$. In the case when $W$ is a time-independent wind, they have found out that the path minimizing travel-time are exactly the geodesics of a Randers metric
\begin{equation*}
F(x,y)=\alpha(x,y)+\beta(x,y)=\frac{\sqrt{\lambda\cdot |y|^2+W_0^2}}{\lambda}-\frac{W_0}{\lambda},
\end{equation*}
where $W=W^i\frac{\partial}{\partial x^i}$ is the wind velocity, $|y|^2=h(y,y)$, $\lambda=1-|W|^2$ and
$W_0=h(W,y)$. 

The Randers metric $F$ is said {\it to solve the Zermelo's navigation problem} in the case of a mild breeze. The condition $h(W,W)<1$ ensures that $F$ is a positive-definite Finsler metric (see \cite{BRS}, \cite{BCS}).

We are going to show that Zermelo's navigation problem has another solution in the case when the wind becomes stronger. Imagine a ship with an engine of 20 knots top speed and a wind of about the same strength. By normalizing, we can consider an open sea represented by a Riemannian manifold $(M,h)$ and a wind $W=W^i\frac{\partial}{\partial x^i}$ such that $h(W,W)=1$. 

If we denote by $u$ the velocity of the ship in the absence of the wind, then in the windy conditions the ship velocity will be given by the composed vector $v=u+W$. Considering the ship sailing with Riemannian unit speed, i.e. $h(u,u)=1$, then our setting is a special case of Zermelo's navigation problem. Before going any further let us remark some basic facts:
\begin{enumerate}
\item Obviously, in windy conditions, the Riemannian metric $h$ no longer gives the travel time along vectors, but a new function $F$ should be introduced on $TM$.
\item Since the ship velocity $|u|$ and the wind strength $|W|$ are equal, clearly, unlike the Randers case,  the ship cannot sail anymore against the wind. In other words, there is a preferential direction, namely $u=-W$, where the resultant vector $v$ vanishes. The implications of this fact are described in the following sections of the paper.
\item Geometrically, one can easily see that in each tangent space $T_xM$, the unit sphere $S_x$ of the new metric $F$ is the $W$-translate of the Riemannian $h$-unit sphere $I_x$. Again, unlike the Randers case, $S_x$ passes through the origin of $T_xM$ and therefore the new metric $F$ cannot be a Finsler metric in the classsic sense. 
\end{enumerate}

We are going to compute this new function $F$ as follows. Start with a Riemannian manifold $(M,h)$ and a vector field $W=W^i\frac{\partial}{\partial x^i}$ on $M$ of $h$-unit length. Then the metric $F$ we are interested in is given by the solution of the equation
\begin{equation}\label{eq for F}
|\frac{y}{F(x,y)}-W|=1.
\end{equation}

From the definition of the inner product it follows
\begin{equation*}
F(x,y)=\frac{|y|^2}{2h(y,W)},
\end{equation*}
provided $W\neq 0$, condition guaranteed by the initial choice of $W$ such that $|W|=1$ everywhere, and $y\neq 0$.

By putting
\begin{equation}\label{a_ij, b_i def}
a_{ij}(x):=e^{-k(x)}h_{ij},\qquad b_i(x):=2e^{-k(x)}W_i,
\end{equation}
we obtain the metric
\begin{equation}\label{Kropina metric def}
F(x,y)=\frac{\alpha^2(x,y)}{\beta(x,y)},
\end{equation}
where $k(x)$ is some smooth function on $M$, $\alpha(x,y)=\sqrt{{a}_{ij}(x)y^iy^j}$ and 
$\beta=b_i(x)y^i$. One can remark that with these notations $b^2:=a^{ij}b_ib_j=4e^{-k(x)}$.

The metric $F$ in \eqref{Kropina metric def} is known as the {\it Kropina metric} and its geometry will be studied in the following sections. 

\begin{remark}
\begin{enumerate}
\item
Let us reflect for a moment at the formulas \eqref{a_ij, b_i def}. From \eqref{eq for F} one can see that $a_{ij}$ and $b_i$ could be introduced without the conformal factor $e^{-k(x)}$. However, in this case $b^2$ would be constant and therefore the Kropina metrics we obtain are subject to this constraint. 
\item It is clear that $a_{ij}(x)$ is a Riemannian metric on $M$.
\end{enumerate}
\end{remark}

Conversely, let us start with a Riemannian structure $(M,a)$ and a 1-form $\beta=b_i(x)y^i$ of Riemannian length $b^2$ not necessarily constant. Then, if we put
\begin{equation}\label{2.5}
h_{ij}:=e^{k(x)}a_{ij},\qquad W_i(x):=\frac{1}{2}e^{k(x)}b_i,
\end{equation}
where 
\begin{equation}\label{2.5'}
k(x)=\log\frac{4}{b^2(x)},
\end{equation} 
we obtain the initial Zermelo's navigation problem in terms of $h$ and $W$ whose solution is precisely the Kropina metric \eqref{Kropina metric def}.

\begin{remark}
\begin{enumerate}
\item For a given Riemannian structure $(M,a)$ and a 1-form $\beta=b_i(x)y^i$ one can see that the norm  \eqref{Kropina metric def} is not defined on all $TM$, but only on a domain $A=\{(x,y)\in TM:\beta> 0\}$.
\item The description given here as the solution of the Zermelo's navigation problem is correct only in the case $|W|=1$, i.e. on $M$ it must exist a vector field nowhere vanishing. This requirement imposes immediately topological restriction on the manifolds admitting well defined Kropina metrics. 
\end{enumerate}
\end{remark}

The remark above leads to the following definition.

\begin{definition}
Let $(M, h)$ be an $n$-dimensional Riemannian space, $W$ a unit vector field globally defined on $M$, and let us consider the $a_{ij}$ and $b_i$ given in \eqref{a_ij, b_i def}. We denote by $F$ the Kropina metric obtained using these $\alpha$ and $\beta$ as explained above. 

Then $F$ will be called {\it Kropina metric with unit vector field}, or {\it U-Kropina metric}.
\end{definition}

Summarizing, we obtain
\begin{proposition}
A metric $F$ is of U-Kropina type if and only if it is solution of the Zermelo's navigation problem on the Riemannian manifold $(M,h)$ under the influence of an $h$-unit wind $W$.
\end{proposition}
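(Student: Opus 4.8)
The plan is to prove the biconditional by showing that the defining relation \eqref{eq for F} of the Zermelo solution under an $h$-unit wind is, after the substitution \eqref{a_ij, b_i def}, exactly equivalent to the Kropina form \eqref{Kropina metric def}; both implications then collapse to a single algebraic identity together with the observation that the dictionary \eqref{2.5}--\eqref{2.5'} inverts \eqref{a_ij, b_i def}. Concretely, I would first record the computation already sketched in the text: expanding $|y/F - W|^2 = 1$ with $h(W,W)=1$ and cancelling yields $F(x,y) = |y|^2 / \bigl(2h(y,W)\bigr)$, valid wherever $h(y,W)>0$ and $y\neq 0$. This formula is the hinge of the whole argument.

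For the direction asserting that a Zermelo solution is U-Kropina, I would start from an arbitrary $(M,h)$ and $h$-unit field $W$ whose travel-time function $F$ solves the navigation problem, so that $F$ satisfies \eqref{eq for F} and hence the displayed formula above. Choosing any smooth $k$ (for definiteness $k\equiv 0$) and setting $a_{ij}=e^{-k}h_{ij}$, $b_i = 2e^{-k}W_i$ as in \eqref{a_ij, b_i def}, a direct substitution gives $\alpha^2 = e^{-k}|y|^2$ and $\beta = 2e^{-k}h(y,W)$, whence $F = \alpha^2/\beta$ with $\beta$ arising from the unit field $W$; thus $F$ is of U-Kropina type by definition. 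Notice that the conformal factor $e^{-k}$ cancels in the ratio, so $F$ depends only on the pair $(h,W)$ and not on the auxiliary $k$ --- a point worth stating explicitly, since it shows the construction is well posed.

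For the converse I would take a U-Kropina metric, i.e.\ $F=\alpha^2/\beta$ built from $(h,W)$ with $|W|_h=1$ via \eqref{a_ij, b_i def}, and run the same substitution backwards to recover $F = |y|^2/\bigl(2h(y,W)\bigr)$; reversing the expansion then shows $|y/F - W|_h = 1$, i.e.\ \eqref{eq for F} holds, so the $F$-indicatrix $S_x$ is the $W$-translate of the $h$-unit sphere $I_x$ and $F$ is precisely the Zermelo solution for $(h,W)$. To keep the two substitutions genuinely mutually inverse I would verify, using $k=\log(4/b^2)$ from \eqref{2.5'}, the identity $|W|_h^2 = h^{ij}W_iW_j = \tfrac14 e^{k}b^2 = 1$, which confirms that the unit-length hypothesis on the wind is equivalent to the consistency relation $b^2 = 4e^{-k}$ already noted after \eqref{Kropina metric def}.

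The argument is essentially a careful transcription of the construction preceding the statement, so I do not expect a genuine obstacle in the algebra. The one point demanding attention is the \emph{domain matching}: the Zermelo solution is meaningful only for navigable directions, whereas the Kropina metric lives on $A=\{(x,y):\beta>0\}$. I would close the proof by checking that $\beta = 2e^{-k}h(y,W)>0$ singles out exactly the half-space of directions $y$ with $h(y,W)>0$, i.e.\ those the ship can actually reach given that it cannot sail against a wind of equal strength ($u=-W$ being the forbidden direction where the resultant $v$ vanishes). This reconciles the conic domain of $F$ with the set of admissible travel directions and justifies treating $F$ as a conic Finsler metric rather than a classical one.
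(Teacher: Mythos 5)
Your proposal is correct and follows essentially the same route as the paper, whose ``proof'' of this proposition is precisely the derivation recorded just before it: expanding $|y/F-W|=1$ with $h(W,W)=1$ to obtain $F=|y|^2/\bigl(2h(y,W)\bigr)$, passing to Kropina form via \eqref{a_ij, b_i def}, and inverting through \eqref{2.5}--\eqref{2.5'} with the consistency check $|W|_h^2=\tfrac14 e^{k}b^2=1$. Your explicit remarks that the conformal factor $e^{-k}$ cancels in $\alpha^2/\beta$ and that the conic domain $\{\beta>0\}$ matches the navigable half-space $h(y,W)>0$ are sound and consistent with the paper's own remarks.
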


This is not a classical Finsler metric, but a conic one (see next section for details). 
\section{Conic Finsler metrics.}\label{section 2}
This is an introductory chapter that follows closely (\cite{JS}).
\subsection{Minkowski conic norms.}\label{subsection 2.1}

Let $V$ be a real vector space and $A$ be a {\it conic domain} of $V$, i.e. $A$ is an open, non-empty subset of $V$ such that if $v\in A$, then $\lambda v \in A$ for all $\lambda >0$.
In particular, it is worth mentioning that the origin of $V$ does not belong to $A$, except the case $A=V$.

One can now define Minkowski norms on such a conic domain as follows.

\begin{definition}(\cite{JS})\label{Minkowski conic norm}
Let $V$ be a real vector space and $A$ be a conic domain of $V$. A {\it Minkowski conic norm} $|| \cdot||$ on $V$ is a map
\[ || \cdot|| : A \longrightarrow (0, \infty),\]
which satisfies the conditions:
\begin{description}
\item{(i)} {\bf strictly positivity} : $||v||> 0$ for any $v\in A$,
\item{(ii)} {\bf positively homogeneity} : $||\lambda v||=\lambda ||v||$ for all $\lambda > 0$,
\item{(iii)} {\bf positive definite Hessian} :\\
       (c1)  $||\cdot||$ is smooth on $A$, so that the {\it fundamental tensor field} $g$ of $||\cdot||$ on $A$ can be defined as the Hessian of $\frac{1}{2}||\cdot||^2$,\\
   (c2) $g$ is pointwise positive definite on $A$ .
\end{description}
\end{definition}

It follows that for a given Minkowski conic norm $||\cdot||$ and $v \in A$, the fundamental tensor $g_v$ is given as :
\[   g_v(u, w):=\frac{\partial^2}{\partial t \partial s}G(v+t u+s w)\bigg|_{t=s=0},  \]
where  $v+t u+s w\in  A$ and $G=|| \cdot||^2/2$.
Moreover, $v \mapsto g_v$ is positively homogeneous of degree 0 (that is, $g_{\lambda v}=g_v$ for $\lambda >0$) and it satisfies 
\[ g_v(v, v)=||v||^2, \hspace{0.2in} g_v(v, w)=\frac{\partial}{\partial s}G(v+s w)\bigg|_{s=0},\]
where $v+s w \in A$.

\begin{proposition}{\rm (\cite{JS})}\label{Proposition 1.2}
Let $|| \cdot || \longrightarrow (0, \infty)$ be a Minkowski conic norm. Then, the unit sphere
\[   S:=\{v\in A  :  ||v||=1\}     \]
is a hypersurface embedded in $A$ as a closed subset, and the position vector at each point is transverse to $S$.
\end{proposition}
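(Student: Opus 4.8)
The plan is to realise $S$ as a regular level set of the smooth function $G=||\cdot||^2/2$ and then to read off all three assertions from the regular value theorem. First I would note that $||v||=1$ is equivalent to $G(v)=1/2$, so that $S=G^{-1}(1/2)$. By condition (c1) the function $G$ is smooth, hence continuous, on $A$; therefore $S$, being the preimage of the closed singleton $\{1/2\}$, is automatically closed \emph{in $A$}. This is exactly the closedness that is claimed, and it causes no trouble that a conic domain $A\subsetneq V$ need not itself be closed in $V$.

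The key point is to show that $1/2$ is a regular value of $G$, i.e. that the differential $dG_v$, defined by $dG_v(w)=\frac{\partial}{\partial s}G(v+sw)\big|_{s=0}$, is nonzero for every $v\in S$. Here I would invoke the identity recorded immediately before the proposition, $dG_v(w)=g_v(v,w)$, which specialises to $dG_v(v)=g_v(v,v)=||v||^2$; equivalently this is just Euler's relation for the degree-two positively homogeneous function $G$. Since every $v\in S$ satisfies $||v||=1$, and in particular $v\neq 0$, we get $dG_v(v)=1\neq 0$, so $dG_v$ is a nonzero linear functional at each point of $S$. Thus $1/2$ is a regular value, and the implicit function theorem shows that $S$ is an embedded hypersurface of $A$.

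For the transversality statement I would identify the position vector at $v\in S$ with $v$ itself (under the canonical identification $T_vV\cong V$) and recall that for a regular level set $T_vS=\ker dG_v$. Because $dG_v(v)=||v||^2=1\neq 0$, the vector $v$ does not lie in $\ker dG_v=T_vS$; as $S$ has codimension one, this means precisely that $\R v$ and $T_vS$ together span $V$, which is the asserted transversality of the position vector.

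The whole argument is essentially immediate once one has the homogeneity relation $dG_v(v)=||v||^2$, so there is no genuine obstacle to overcome. The only points deserving a little attention are that the positive definiteness of $g$ (condition (c2)) is in fact \emph{not} needed for either the hypersurface or the transversality claim — only smoothness and homogeneity enter — and that one should keep track of the fact that ``closed'' is meant relative to $A$ rather than to $V$, so that the conic (rather than linear) nature of the domain is harmless.
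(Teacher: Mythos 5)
Your proof is correct. The paper itself contains no proof of this proposition (it is quoted verbatim from \cite{JS}), and your argument --- realising $S$ as the regular level set $G^{-1}(1/2)$ and using the homogeneity identity $dG_v(v)=g_v(v,v)=||v||^2=1$ on $S$ to get both the embedded-hypersurface claim and the transversality of the position vector --- is precisely the standard regular-value argument of that reference, with your side remark that condition (c2) is never used also being accurate.
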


The unit sphere  $S$ defined in Proposition \ref{Proposition 1.2} will be called the {\it indicatrix} of a Minkowski conic norm $|| \cdot||$.

Similarly to the classical Finslerian case, the unit balls of Minkowski conic norms have special properties. Indeed, one has

\begin{proposition}{\rm(\cite{JS})}\label{unit ball prop}
Let $|| \cdot|| : A \longrightarrow (0, \infty)$ be a Minkowski conic norm. Then
\begin{description}
\item (1) The ball defined by
  \[B:=\{ v\in A :  ||v||\le 1\}          \]
   is a closed subset of $A$ which intersects all the directions $D_v:=\{\lambda v | \lambda >0\}$, where $v\in A$.
\item(2) $B$ is starshaped from the origin, i.e., $v\in B$ implies $\lambda v \in B$ for all $\lambda \in (0, 1)$.
\item(3) The boundary $S$ of $B$ in $A$ is a smooth hypersurface and a closed subset of $A$ such that the position vector at each $v\in S$ is transversal.
\item(4) For each $v\in B$ there exists a (necessarily unique) $\lambda >0$ such that $v/\lambda\in S$.
\item(5) $S$ is homeomorphic to an open subset of the usual sphere.
\end{description}
\end{proposition}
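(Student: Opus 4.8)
The plan is to derive all five assertions from the positive $1$-homogeneity of $\|\cdot\|$, its continuity, and the structural statement already proved in Proposition \ref{Proposition 1.2}. The guiding observation is that $\|\cdot\|$ grows linearly along each ray $D_v$, so the ball $B$ is entirely governed by where each ray crosses the level set $\{\|\cdot\|=1\}$; properties (1), (2) and (4) are then elementary, while (3) and (5) require a short topological argument. I expect that the only real care needed is in keeping every rescaled vector inside the conic domain $A$, the set where $\|\cdot\|$ is defined.

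First I would dispatch (1), (2) and (4), which are immediate. For closedness, since $\|\cdot\|\colon A\to(0,\infty)$ is continuous and $B=\|\cdot\|^{-1}\bigl((0,1]\bigr)$ is the preimage of a set closed in $(0,\infty)$, the ball $B$ is closed in $A$. For the intersection with directions, given $v\in A$ the ray $D_v$ lies in $A$ by conicity, and $\lambda:=1/\|v\|>0$ yields $\|\lambda v\|=\lambda\|v\|=1$, so $\lambda v\in B\cap D_v$. Star-shapedness (2) is the same computation: if $\|v\|\le 1$ and $0<\lambda<1$, then $\lambda v\in A$ and $\|\lambda v\|=\lambda\|v\|\le\lambda<1$. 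For (4), given $v\in B$ put $\lambda:=\|v\|>0$; then $v/\lambda\in A$ by conicity and $\|v/\lambda\|=\|v\|/\lambda=1$, while any $\mu>0$ with $v/\mu\in S$ forces $\mu=\|v\|$, giving uniqueness.

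The only parts needing a genuine argument are (3) and (5). For (3), I would show that the topological boundary of $B$ in $A$ equals the unit sphere $S=\{\,v\in A:\|v\|=1\,\}$. The set $\{\|\cdot\|<1\}$ is open and contained in $B$, hence lies in $\mathrm{int}\,B$; conversely, for $v$ with $\|v\|=1$ the points $(1+\varepsilon)v\in A$ satisfy $\|(1+\varepsilon)v\|=1+\varepsilon>1$ and converge to $v$ as $\varepsilon\downarrow 0$, so $v\notin\mathrm{int}\,B$. Thus $\partial B=S$, and the smooth-hypersurface, closedness, and transversality claims are then exactly the content of Proposition \ref{Proposition 1.2}.

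Finally, for (5) I would fix an auxiliary Euclidean norm $|\cdot|_0$ on $V$ with unit sphere $\Sph\subset V$ and consider the radial projection $\pi\colon v\mapsto v/|v|_0$. Since $A$ is open and conic, $\pi(A)$ is an open subset of $\Sph$ contained in $A$, and I claim $\pi|_S\colon S\to\pi(A)$ is a homeomorphism. By $1$-homogeneity each ray meets $S$ exactly once (this is (4) read along $D_v$), so $\pi|_S$ is a continuous bijection onto $\pi(A)$; its inverse is the map $d\mapsto d/\|d\|$, which is continuous because $\|\cdot\|$ is continuous and positive on $\pi(A)\subseteq A$. Hence $S$ is homeomorphic to the open subset $\pi(A)$ of the usual sphere. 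The main obstacle is more presentational than mathematical: one must consistently verify that all the scaled vectors $\lambda v$, $v/|v|_0$ and $d/\|d\|$ remain in $A$, and that the identification $\partial B=S$ in (3) is performed in the subspace topology of $A$ rather than of $V$; once these domain issues are settled, (3) and (5) reduce to Proposition \ref{Proposition 1.2} and a standard radial-projection argument.
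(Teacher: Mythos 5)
Your proof is correct. Note, however, that the paper itself offers no argument for this proposition: it is quoted verbatim from \cite{JS} and used as a black box, so there is no in-paper proof to compare against. Your argument is the natural one and essentially the standard route: (1), (2), (4) are immediate from continuity and positive $1$-homogeneity; for (3) you correctly identify the boundary of $B$ in $A$ with the level set $\{\,v\in A:\|v\|=1\,\}$ (the two-sided check --- that $\{\|\cdot\|<1\}$ is open, and that $(1+\varepsilon)v\in A\setminus B$ approximates any norm-one $v$ --- is exactly what is needed, and the omitted case $\|v\|>1$ is trivially excluded since $\{\|\cdot\|>1\}$ is open and disjoint from $B$), after which smoothness, closedness in $A$ and transversality of the position vector are precisely Proposition \ref{Proposition 1.2}; and for (5) the radial projection onto an auxiliary Euclidean sphere, with inverse $d\mapsto d/\|d\|$, is the standard argument, using that $\pi(A)=A\cap\Sph$ is open in $\Sph$ because $A$ is open and conic. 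Your care in checking that every rescaled vector stays in $A$, and that interiors and boundaries are taken in $A$ (which, since $A$ is open in $V$, agree with those in $V$), addresses the only genuinely delicate points of the conic setting.
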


Conversely, if $A$ is a conic domain in $V$, and $B\subset A$ is a subset that satisfies the properties in Proposition \ref{unit ball prop}, then the map
\begin{equation*}
||\cdot||_B:A\to \R,\qquad v\mapsto \inf\{\alpha\geq 0:\frac{v}{\alpha}\in B\}
\end{equation*}
is a Minkowski conic norm and its closed unit ball coincides with $B$.


\subsection{Conic Finsler metrics.}\label{subsection 2.2}

Minkowski conic norms allow to define  conic Finsler metrics.

\begin{definition}(\cite{JS})
Let $M$ be an $n$-dimensional differentiable manifold and $A\subset TM$ be an open subset of the tangent bundle $TM$ such that $\pi(A)=M$, where $\pi : TM \longrightarrow M$ is the natural projection, and let $F : A \longrightarrow [0, \infty)$ be a continuous function. 
Assume that $(A, F)$ satisfies:
\begin{description}
  \item{(i)} $A$ is {\it conic} in $TM$, i.e.  for each $x\in M$, $A_x:=A\cap T_xM$ is a conic domain in $T_xM$.   
 \item{(ii)} $F$ is smooth on $A$.
\end{description}


One says that  $(A, F)$, or simply $F$, is a {\it conic Finsler metric} if each $F_x$ is a Minkowski conic norm, i.e. the fundamental tensor $g$ on $A$  induced by all the fundamental tensor field $g_x$  on $A_x$ is positive definite for each $x\in M$. A Finsler space with a conic Finsler metric is called a {\it conic Finsler space}.
\end{definition}

Following \cite{JS}, let us observe that the fundamental tensor $g$ can be thought as a section of a fiber bundle over $A$. To be more precise, denote the restriction to $A$ of the natural projection 
$\pi : TM \longrightarrow M$ 
by
$\pi_A : A \longrightarrow M$
and define 
$\pi^* : \pi_A^*(TM) \longrightarrow A$
to be the fiber bundle obtained as the pulled-back bundle of $\pi : TM \longrightarrow M$ thorough $\pi : A \longrightarrow M$, namely we have
\begin{eqnarray*}
 \pi_A^*(TM)    \hspace{0.2in}          \longrightarrow \hspace{0.2in}                TM           \nonumber\\
          \pi^*     \downarrow     \hspace{1in}     \downarrow \pi   \label{diagram} \\
A \hspace{0.4in}  \longrightarrow  \hspace{0.2in}       M.            \nonumber
\end{eqnarray*}

Then $g$ is a smooth symmetric section of the fiber bundle $\pi_A^*(TM^*)\otimes \pi_A^*(TM^*)$ over $A$, where $\pi_A^*(TM^*)$ is the dual of $\pi_A^*(TM)$. Let us remark that if we fix a vector $v\in A$, then $g_v$ is a symmetric bilinear form on $T_{\pi{(v)}}M$.

Geometrically, over each $(x,y)\in A$ we erect a copy of $T_xM$ endowed with the inner product $g_{ij}(x,y)dy^i\otimes dy^j$, $(x,y)\in A$. The resulting vector bundle of fiber dimension $n$ is the pullback bundle over $A\subset \widetilde TM$ denoted as $(\pi_A^*(TM),  \pi_A^*, A)$. 

\begin{remark}
We remark that, due to its $0$-homogeneity, $g_{ij}(x,y)$ is constant along each ray contained in $A_x$ that emanates from the origin of $T_xM$. Therefore, at each $x\in M$, one can consider the ray
\begin{equation*}
(x,[y]):=\{(x,\lambda y)\in A: \lambda>0\},
\end{equation*}
and denote the set of such elements by $S_AM$. 

Over each ray $(x,[y])\in S_AM$ one can now erect a copy of $T_xM$ and introduce the inner product $g_{ij}(x,y)dy^i\otimes dy^j$ there. We obtain in this way a $(2n-1)$ dimensional manifold $S_AM$ and a vector bundle of fiber dimension $n$ constructed over $S_AM$ with the fiber metric $g$ given above. However, due to computational issues, it is more convenient to work with the affine coordinates of $A\subset \widetilde{TM}$.
\end{remark}

As in the classical case, there is a global section of $\pi_A^*(TM)$ given by
\begin{equation*}
l_A(x,y):=\frac{y^i}{F(x,y)}\frac{\partial}{\partial x^i},\quad (x,y)\in A
\end{equation*}
called the {\it canonical section}. 

Likely, the dual vector bundle $\pi_A^*(TM^*)$ also has a distinguished global section 
\begin{equation*}
\omega:=\frac{\partial F}{\partial y^i}dx^i,
\end{equation*}
the {\it Hilbert form} of $F$. 

On $A\subset \widetilde{TM}$ we can introduce the Christoffel symbols of the second kind
\begin{equation*}
\gamma_{jk}^i:=\frac{1}{2}g^{is}
\bigl( \frac{\partial g_{sj}}{\partial x^k}- \frac{\partial g_{jk}}{\partial x^s}+
\frac{\partial g_{ks}}{\partial x^j} \bigr)
\end{equation*}
and the canonical spray coefficients
\begin{equation*}
G^i(x,y):=\frac{1}{2}\gamma_{jk}^i(x,y)y^jy^k,\quad (x,y)\in A.
\end{equation*}

Then the nonlinear connection has the coefficients
\begin{equation*}
N_j^i(x,y):=\frac{\partial G^i(x,y)}{\partial y^j},\quad (x,y)\in A
\end{equation*}
and using it one can change the natural basis $\{\frac{\partial}{\partial x^i}, \frac{\partial }{\partial y^i}\}$  of $T_uA$ to the adapted basis $\{\frac{\delta}{\delta x^i}:=\frac{\partial}{\partial x^i}-{N_i}^j\frac{\partial} {\partial y^j}, \frac{\partial }{\partial y^i}\}$.

The Berwald connection coefficients are given by 
\begin{equation*}
\Gamma_{jk}^i(x,y):=\frac{\partial N_j^i}{\partial y^k}=\frac{\partial^2 G^i}{\partial y^j\partial y^k}
\end{equation*}
and using it one has the Berwald $h$-curvature tensor
\begin{equation*}
R_{j\ kl}^{\ i}:=\frac{\delta \Gamma^i_{jk}}{\delta x^l}+\Gamma_{jk}^r\Gamma_{rl}^i-\textrm{terms with $k$, $l$ interchanged}.
\end{equation*}


\begin{definition} 
Let $(M, F)$ be an $n(\ge 2)$-dimensional conic Finsler space, where $F : A\longrightarrow (0, \infty)$ is a conic Finsler metric. 

The flag curvature $K(x, y, X)$ of a conic Finsler space is defined by
\begin{equation*}\label{flag curvature}
	K(x, y, X):=\frac{R_{hijk}y^hX^iy^jX^k}{(g_{hj}g_{ik}-g_{hk}g_{ij})y^hX^iy^jX^k},
\end{equation*}
where
\[\mathcal{Y}=y^i\frac{\partial }{\partial x^i}, \hspace{0.1in} \mathcal{X}=X^i(x, y)\frac{\partial }{\partial x^i}\in A. \]

If $K(x, y, X)$ is independent of $X$, the conic Finsler space is said to be of {\it scalar flag  curvature}. 
Furthermore,  if $K(x, y, X)$ is constant for any $x$, $y$ and $X$, the conic Finsler space is called to be of {\it constant flag curvature}.
\end{definition}

If a conic Finsler space $(M, F)$, where $F : A \longrightarrow (0, \infty)$, is of scalar flag curvature, then, similarly with the classical Finsler geometry, we get

\begin{proposition}\label{Proposition 3.1}
  The necessary and sufficient condition for a conic Finsler space $(M, F)$,  with the Finsler conic metric 
  $F : A \longrightarrow (0, \infty)$, to be of scalar flag curvature $K$ is that the equation
 \begin{equation}\label{scalar curvature}
	 {{R_0}^i}_{0l}=KF^2 {h^i}_l
\end{equation}
holds, 
where 
\[{h^i}_l={\delta^i}_l-l^il_l, \hspace{0.1in} l^i=\frac{y^i}{F}, \hspace{0.1in} l_i=\frac{\partial F}{\partial y^i}.  \] 
\end{proposition}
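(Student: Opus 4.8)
The plan is to reduce the defining ratio for $K(x,y,X)$ to an identity of quadratic forms in $X$ and then to strip off the arbitrary flagpole transverse vector $X$.

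First I would simplify the denominator. Grouping the four contractions and using the two homogeneity relations $g_{hj}y^hy^j=F^2$ (which is $g_v(v,v)=\|v\|^2$) and $g_{ij}y^j=Fl_i$ (which follows from $g_v(v,w)=\partial_s G(v+sw)|_{s=0}$ with $G=F^2/2$, since $\partial_s\tfrac12 F^2(y+sw)|_0=F\,\partial_{y^i}F\,w^i$), I obtain
\[(g_{hj}g_{ik}-g_{hk}g_{ij})y^hX^iy^jX^k=F^2 g_{ik}X^iX^k-F^2(l_iX^i)^2=F^2 h_{ik}X^iX^k,\]
where $h_{ik}:=g_{ik}-l_il_k$ is the angular metric, whose raised form is exactly $h^i_l=\delta^i_l-l^il_l$. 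Note that $h_{ik}X^iX^k>0$ whenever $X$ is not proportional to $y$, by positive-definiteness of $g$, so the denominator vanishes precisely on the degenerate flags $X\parallel y$ and the ratio $K(x,y,X)$ is well defined off that locus.

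Next I would rewrite the numerator as a quadratic form in $X$. Setting $\cR_{ik}:=R_{hijk}y^hy^j$, the numerator is $\cR_{ik}X^iX^k$, and $\cR_{ik}$ is symmetric (self-adjointness of the predecessor/Jacobi endomorphism, a standard fact inherited in the conic setting). Lowering the free upper index of the twofold $y$-contracted Berwald $h$-curvature via the usual convention gives the bridge identity $\cR_{il}=g_{im}\,{R_0^m}_{0l}$, where ${R_0^m}_{0l}=R_{j\ kl}^{\ m}y^jy^k$ is exactly the object appearing in \eqref{scalar curvature}. With both reductions in hand, scalar flag curvature $K$ means $\cR_{ik}X^iX^k=KF^2h_{ik}X^iX^k$ for every $X$ with $X\not\parallel y$. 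Since both sides are quadratic polynomials in $X$ agreeing on a nonempty open set, they agree identically; polarizing and invoking the symmetry of $\cR$ and of $h$ upgrades this to the tensor identity $\cR_{ik}=KF^2h_{ik}$. Raising the index $i$ with $g^{\,\cdot\,}$ and using $g^{im}\cR_{mk}={R_0^i}_{0k}$ together with $g^{im}h_{mk}=h^i_k$ yields ${R_0^i}_{0l}=KF^2h^i_l$, which is \eqref{scalar curvature}. The converse is immediate: substituting \eqref{scalar curvature} back into the definition of $K(x,y,X)$ and contracting with $y^hX^iy^jX^k$ reproduces the ratio $K$, independent of $X$.

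The main obstacle I anticipate is bookkeeping rather than conceptual. One must pin down the precise index-lowering identity $\cR_{il}=g_{im}{R_0^m}_{0l}$ relating the fully covariant curvature in the flag-curvature definition to the Berwald $h$-curvature $R_{j\ kl}^{\ i}$ of the conic connection, and confirm that the twofold $y$-contraction $\cR_{ik}$ is genuinely symmetric — this symmetry is exactly what licenses passing from the scalar identity (valid only when contracted with $X\otimes X$) to the tensor identity. The only specifically conic point to verify is that restricting $X$ to the open cone $A_x$ rather than to all of $T_xM$ costs nothing, and this is harmless because a quadratic form that vanishes on a nonempty open subset of $T_xM$ vanishes identically.
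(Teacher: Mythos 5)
Your proof is correct and follows exactly the classical argument that the paper itself invokes: the paper states Proposition \ref{Proposition 3.1} without an explicit proof, appealing only to ``similarly with the classical Finsler geometry,'' and your write-up supplies precisely that argument — factoring the denominator into $F^2h_{ik}X^iX^k$ via the Euler relations $g_{hj}y^hy^j=F^2$ and $g_{ij}y^j=Fl_i$, using the symmetry of $\cR_{ik}=R_{hijk}y^hy^j$ to polarize the quadratic-form identity into the tensor identity $\cR_{ik}=KF^2h_{ik}$, and raising an index. Your one genuinely conic observation — that two quadratic forms in $X$ agreeing on the nonempty open cone $A_x$ agree on all of $T_xM$ — is exactly the point that makes the classical argument transfer verbatim, and is the detail the paper leaves implicit when it restricts $y$ and $X$ to $A$.
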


The geometrical quantities in the equation (\ref{scalar curvature}) depend on $x$ and $y$. 
We must notice that $y$ is restricted by $y\in A_x$ for any $x\in M$.
This is the main difference with the classical theory of Finsler spaces.


\section{Kropina metrics of constant flag curvature.}\label{section 3}

\subsection{Kropina metrics as conic Finsler metrics.}

Let $M$ be an $n(\ge 2)$-dimensional differential  manifold endowed with a Riemannian metric $a=(a_{ij}(x))$, and recall that a Kropina space $(M,F)$ is the Finsler space with the norm $F(x, y):=\alpha^2/\beta$, where $\alpha^2=a_{ij}(x)y^iy^j$ and $\beta=b_i(x)y^i$.

For a Kropina metric $F$, at each $x\in M$ we
define
\begin{equation}\label{domain-b}
     A_x:=\{ y=y^{i}\frac{\partial}{\partial y^{i}}\in T_x M  | b_i(x)y^i>0\},
\end{equation}
which is a conic domain of $T_x M$ whose boundary is the hyperplane $\{ y=y^{i}\frac{\partial}{\partial y^{i}}\in T_x M  | b_i(x)y^i=0\}$.

\begin{proposition}
 The Kropina metric $F:A\to (0,\infty)$ is a conic Finsler metric, where $A_x\subset T_xM$ is the conic  domain \eqref{domain-b}, for any $x\in M$.
\end{proposition}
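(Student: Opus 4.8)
The conic structure of $A$ is already built into the definition \eqref{domain-b}, so what remains is to verify the three defining properties of a Minkowski conic norm for each $F_x$, together with smoothness, and the plan is to dispatch everything except positive definiteness of the fundamental tensor at once. Since $a$ is Riemannian, $\alpha^2(x,y)=a_{ij}(x)y^iy^j>0$ for every $y\neq0$, while on $A_x$ one has $\beta(x,y)=b_i(x)y^i>0$; in particular $y\neq0$ there, so $F=\alpha^2/\beta>0$, which gives strict positivity (i). Homogeneity (ii) is immediate because the numerator is homogeneous of degree $2$ and the denominator of degree $1$, so $F(x,\lambda y)=\lambda F(x,y)$ for $\lambda>0$, and $A_x$ is invariant under such scalings. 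Smoothness on $A$ holds because $\beta$ is nowhere zero on $A$ and $a_{ij},b_i$ are smooth, so no denominator degenerates.

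The substance of the proposition is condition (iii). Writing $\ell_i:=a_{ij}y^j$ and differentiating $F^2=\alpha^4/\beta^2$ twice, I would bring the fundamental tensor $g_{ij}=\tfrac12\,\partial^2(F^2)/\partial y^i\partial y^j$ into the form
\begin{equation*}
\beta^4 g_{ij}=2\alpha^2\beta^2 a_{ij}+4\beta^2\ell_i\ell_j-4\alpha^2\beta(\ell_i b_j+b_i\ell_j)+3\alpha^4 b_ib_j.
\end{equation*}
Because $\beta>0$, positive definiteness of $g$ is equivalent to that of the matrix $M_{ij}:=\beta^4 g_{ij}$. The key structural observation is that $M$ is the positive multiple $2\alpha^2\beta^2 a_{ij}$ perturbed by a symmetric term of rank at most two, supported on the plane $\Pi:=\mathrm{span}\{y,b^\sharp\}$, where $b^\sharp$ is the $a$-dual of $\beta$ with components $b^i:=a^{ij}b_j$. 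Indeed, contracting $M$ with an arbitrary $\xi$ produces, besides $2\alpha^2\beta^2 a_{ij}\xi^i\xi^j$, only the two scalars $P:=\ell_i\xi^i$ and $Q:=b_i\xi^i$, both of which vanish when $\xi$ is $a$-orthogonal to $\Pi$.

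I would therefore decompose $\xi=\xi_\parallel+\xi_\perp$ with $\xi_\parallel\in\Pi$ and $\xi_\perp$ $a$-orthogonal to $\Pi$. The cross terms decouple, so $M(\xi,\xi)=2\alpha^2\beta^2|\xi_\perp|_a^2+\Psi(\xi_\parallel)$, reducing the whole problem to showing that the restriction $\Psi$ to the two-dimensional space $\Pi$ is positive definite. Choosing an $a$-orthonormal basis $e_1=y/\alpha$, $e_2$ of $\Pi$ and writing $b^\sharp=(\beta/\alpha)e_1+c\,e_2$ with $c^2=b^2-\beta^2/\alpha^2\geq0$ (the Cauchy--Schwarz defect, nonnegative since $\beta=\langle y,b^\sharp\rangle_a$), the form $\Psi$ becomes explicit in the coordinates $(u,v)$ of $\xi_\parallel$; its $2\times2$ matrix has positive leading entry $\alpha^2\beta^2$ and determinant $2\alpha^4\beta^2(\beta^2+\alpha^2 c^2)$, which is strictly positive precisely because $\beta>0$. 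This closes the argument.

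The main obstacle is exactly this final step. Since $M$ differs from $2\alpha^2\beta^2 a$ by an \emph{indefinite} rank-two correction — the $(P,Q)$-quadratic $4\beta^2P^2-8\alpha^2\beta PQ+3\alpha^4Q^2$ has positive discriminant $16\alpha^4\beta^2$ and so takes negative values — positivity is not automatic and must be extracted from the combined behaviour of all the terms on $\Pi$. It is worth emphasizing that the determinant degenerates to $0$ exactly as $\beta\to0$, which is the analytic counterpart of the fact noted in Section \ref{section 1} that the Kropina indicatrix passes through the origin and that $F$ admits no extension to a genuine Finsler metric across the boundary $\{\beta=0\}$ of $A$.
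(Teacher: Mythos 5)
Your proof is correct, and the decisive step takes a genuinely different route from the paper's. Both arguments start from the same Hessian (your $\beta^4 g_{ij}=2\alpha^2\beta^2 a_{ij}+4\beta^2\ell_i\ell_j-4\alpha^2\beta(\ell_i b_j+b_i\ell_j)+3\alpha^4 b_ib_j$ is exactly the paper's formula cleared of denominators), but the paper then contracts with $v$, applies Cauchy--Schwarz in the form $\alpha^2 a_{ij}v^iv^j\ge X^2$ with $X=a_{0i}v^i$, $Y=b_iv^i$, and completes the square to get $g(v,v)\ge \frac{3\alpha^4}{\beta^4}\bigl[(Y-\frac{4}{3}\frac{\beta}{\alpha^2}X)^2+\frac{2}{9}\frac{\beta^2}{\alpha^4}X^2\bigr]\ge 0$, finishing with an equality-case analysis. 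You instead block-diagonalize exactly: the cross terms between $\xi_\perp$ and $\Pi=\mathrm{span}\{y,b^\sharp\}$ vanish because both scalars $P$ and $Q$ annihilate $\xi_\perp$, and Sylvester's criterion on the restriction to $\Pi$ closes the argument; I verified your $2\times 2$ data (leading entry $\alpha^2\beta^2$, off-diagonal $-\alpha^3\beta c$, determinant $2\alpha^4\beta^2(\beta^2+\alpha^2c^2)$), so the computation is sound. What your route buys: strict definiteness falls out directly, with no inequality until the end and no separate equality discussion --- a real advantage, since the paper's equality analysis is terse (vanishing of its lower bound also requires equality in Cauchy--Schwarz, i.e.\ $v\parallel y$, which with $X=0$ forces $v=0$; the paper's ``differentiate with respect to $y^i$'' step is not justified for a fixed $(x,y)$, whereas your exact formula needs no such repair) --- and your determinant formula additionally quantifies how $g$ degenerates as $\beta\to 0$, explaining the failure of convexity at the boundary of the conic domain. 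What the paper's route buys is brevity: one inequality and one completed square, with no choice of basis and no degenerate case to track. The only point you should flag explicitly is the case $c=0$, when $y$ and $b^\sharp$ are parallel and $\Pi$ collapses to a line: your argument still works by taking any $a$-orthonormal extension $e_2$ of $e_1=y/\alpha$ (the determinant is then $2\alpha^4\beta^4>0$), but as written the phrase ``the two-dimensional space $\Pi$'' silently assumes linear independence.
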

\begin{proof}
A straightforward computation shows that the Hessian matrix of the Kropina metric $F=\alpha^2/\beta$ can be written as
\begin{equation*}
g_{ij}(x,y)=\frac{2\alpha^2}{\beta^2}a_{ij}+\frac{3\alpha^4}{\beta^4}b_ib_j-\frac{4\alpha^2}{\beta^3}(a_{0i}b_j+a_{0j}b_i)+\frac{4}{\beta^2}a_{0i}a_{0j},
\end{equation*}
where $a_{0i}=a_{ji}y^j$. 

It follows	
\begin{equation*}
g_{ij}(x,y)v^iv^j=\frac{2}{\beta^2}\alpha^2a_{ij}v^iv^j+\frac{3\alpha^4}{\beta^4}(b_iv^i)^2-\frac{8\alpha^2}{\beta^3}(a_{0i}v^i)(b_jv^j)+\frac{4}{\beta^2}(a_{0i}v^i)^2,
\end{equation*}
where $V=v^i(x)\frac{\partial}{\partial x^i}$ is a section of $\pi^*_A(TM)$.  

Taking now into account the Cauchy-Schwartz inequality
\begin{equation*}
\sqrt{a_{ij}y^iy^j}\sqrt{a_{ij}v^iv^j}\geq a_{ij}y^iv^j,
\end{equation*}
we obtain
\begin{equation*}
\begin{split}
g_{ij}(x,y)v^iv^j& \geq \frac{2}{\beta^2}X^2+\frac{3\alpha^4}{\beta^4}Y^2-\frac{8\alpha^2}{\beta^3}XY+\frac{4}{\beta^2}X^2\\
& =\frac{3\alpha^4}{\beta^4}\Bigl[(Y-\frac{4}{3}\frac{\beta}{\alpha^2}X)^2+\frac{2}{9}\frac{\beta^2}{\alpha^4}X^2   \Bigr]\geq 0,
\end{split}
\end{equation*}
where we denote for simplicity $X:=a_{0i}v^i$, $Y:=b_iv^i$. 

Obviously, the equality holds for 
\begin{equation*}
X=0, \qquad Y=0, 
\end{equation*}
i.e.
\begin{equation*}
a_{ji}v^iy^j=0,\qquad b_iv^i=0.
\end{equation*}

By taking the derivative with respect to $y^i$ of the first equality and taking into account that $a_{ij}$ is positive definite it follows $v^i=0$, i.e. $g_{ij}$ is positive definite on $A$. 

$\qedd$
\end{proof}


\subsection{Admissible curves and geodesics.}\label{Admissible curve}
 Let us consider a smooth curve $c:[a,b]\to M$ with velocity $\dot{c}$ in a conic Finsler space $(M,F)$ with the conic Finsler metric $F:A\to (0,\infty)$. Since the expression $F(c, \dot{c})$ does not always make sense, one needs restrict to curves where it does. Following \cite{JS} we define:

\begin{definition}(\cite{JS})
Let $F:A \longrightarrow (0, \infty)$ be a conic Finsler metric on $M$ and    $a=t_0< \cdots<t_k=b$ be a partition of $[a, b]$. 
A piecewise smooth curve $c : [a, b] \longrightarrow M$ is called {\it $F$-admissible} if
\begin{enumerate}
\item for  $t \in (t_{i-1}, t_i )$,  $(i=1, \cdots, k)$, the derivative $\dot{c}(t)$  belongs to $A$,     
\item the right derivative $\dot{c}(t_{i-1}^+)$ and the left derivative $\dot{c}(t_i^-)$ belong to $A_{c(t_{i-1})}$  and $A_{c(t_i)}$ $(i=1, \cdots, k)$, respectively.
\end{enumerate}
 The $F$-{\it length} of an $F$-admissible curve $c$ is defined by
\begin{equation*}\label{length}
    \mathcal{L}_F(c)=\sum_{i=1}^{k}\int_{t_{i-1}}^{t_i} F(\dot{c}(t))d t.  
\end{equation*}
\end{definition}

\begin{definition}(\cite{JS})
Let $F:A \longrightarrow (0, \infty)$ be a conic Finsler metric on $M$ and  $c : [a, b] \longrightarrow M$ be a piecewise smooth $F$-admissible curve   between two fixed points $p=c(a)$, $q=c(b)\in M$. Let  $a=t_0< \cdots<t_k=b$ be a partition of $[a, b]$. 
A {\it geodesic} of $F$ that joins points $p$ and $q$ is a critical curve of constant speed of the length  functional
\begin{equation*}\label{energy functional}
E_F(c)=\sum_{i=1}^k\int_{t_{i-1}}^{t_i} F(\dot{c}(t))d t.
\end{equation*}
\end{definition}

A detailed study of the geodesics of a conic Finsler metrics of Kropina type is going to be done elsewhere. 

Pursuing our attempt toward a description of U-Kropina metrics as solutions of Zermelo's navigation problem, we point out that under the influence of the unit length wind $W$, the time minimizing travel path is no longer a Riemannian $h$-geodesic, but a geodesic of the U-Kropina metric $F$ in the sense described above.


\subsection{Constant flag curvature.}

We are going to give in the following the description of Kropina spaces $(M, F=\alpha^2/\beta)$ of constant flag curvature $K$ by means of Zermelo's navigation problem. As mentioned above, a Kropina metric is a conic Finsler metric, and therefore in the constant flag curvature case it can be characterized as in Proposition \ref{Proposition 3.1}, where $K$ is constant.

Restricting to U-Kropina metrics, 
the necessary and sufficient conditions for a Kropina space to be of constant flag curvature $K$  can be easily obtained. Taking into account that $(M,F)$ is a conic Finsler metric defined on the conic domain $A_x$ for each $x\in M$, by restricting the computations in \cite{YO1} and \cite{YO2} to $A_x$
 we have
 
\begin{theorem}{\rm (\cite{YO1}, \cite{YO2})}\label{Theorem 3.2}

Let $(M, F=\alpha^2/\beta)$ be an $n(\ge 2)$-dimensional U-Kropina space and let  $h_{ij}(x)$,  $W=W^i(\partial/\partial x^i)$ be given as in \eqref{2.5}, provided \eqref{2.5'}.

Then, the Kropina space $(M, F)$ is  of constant flag curvature $K$ if and only if the following conditions hold:
 
\begin{enumerate}
\item $W=W^i(\partial/\partial x^i)$ is a Killing vector field on $(M,h)$, that is $W_{i||j}+W_{j||i}=0$, where $"_{||}"$ represents the covariant derivative with respect to $h$.
\item The Riemannian space $(M, h)$ is of constant sectional curvature $K$.
\end{enumerate}
\end{theorem}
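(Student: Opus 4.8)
The plan is to read the constant-flag-curvature condition off Proposition~\ref{Proposition 3.1} and translate it, through the Zermelo navigation dictionary, into Riemannian conditions on $(M,h)$ and $W$. Since the U-Kropina metric $F=\alpha^2/\beta$ is the navigation solution of $(h,W)$ with $h(W,W)=1$, and $a_{ij},b_i$ are tied to $h_{ij},W_i$ by the conformal relations \eqref{2.5}--\eqref{2.5'} with $k=\log(4/b^2)$, the first task is to express the spray coefficients $G^i$ of $F$ in terms of the Levi-Civita spray $\wh G^i$ of $(M,h)$ together with correction terms built from $W$ and its $h$-covariant derivative $W_{i||j}$. Substituting this into the scalar-curvature equation \eqref{scalar curvature} with $K$ held constant, and expanding the identity ${{R_0}^i}_{0l}=KF^2{h^i}_l$ over $y\in A_x$, I would then separate the contribution of the metric curvature of $h$ from that of the wind by splitting $W_{i||j}$ into its symmetric (deformation) and antisymmetric (rotation) parts.

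For the necessity direction the rotation part $W_{[i||j]}$ enters the spray like a magnetic term and is compatible with scalar flag curvature, whereas the symmetric part $W_{(i||j)}$ is the genuine obstruction. Matching coefficients in the expanded identity (this is exactly where the explicit local computations of \cite{M2} and of \cite{YO1},\cite{YO2}, recast from the $(a,b)$-variables into the $(h,W)$-variables, are used) forces the deformation tensor to be pure trace, $W_{i||j}+W_{j||i}=2\sigma\,h_{ij}$ for some function $\sigma$, i.e. $W$ is an infinitesimal homothety of $h$. Here the unit-length hypothesis does the decisive work: contracting with $W^iW^j$ and using that $h(W,W)=1$ is constant gives $2W^iW^jW_{i||j}=(h(W,W))_{||j}W^j=0$, hence $2\sigma=2\sigma\,h_{ij}W^iW^j=0$, so $\sigma\equiv 0$ and $W$ is Killing. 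With $W$ Killing the remaining part of the identity collapses to $\wh R_{ijkl}=K(h_{ik}h_{jl}-h_{il}h_{jk})$, which is constant sectional curvature $K$.

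For the sufficiency direction I would argue conceptually through the navigation flow. If $W$ is Killing, its flow $\phi_t$ consists of $h$-isometries, and the Zermelo picture identifies each $F$-geodesic with the $\phi_t$-advected image of an $h$-geodesic (the ship's true path is its intended $h$-geodesic carried along by the wind). Since an $h$-isometry preserves sectional curvature and $(M,h)$ has constant curvature $K$, the flag curvature computed from \eqref{scalar curvature} reduces to the constant $K$; the constraint $h(W,W)=1$ is precisely what rules out the homothetic shift ($\sigma\neq 0$) that would otherwise displace $K_{\mathrm{flag}}$ from $K_{\mathrm{sect}}$, as happens in the Randers navigation of \cite{BRS}.

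The main obstacle is the conformal bookkeeping linking the two sides. Because $k=\log(4/b^2)$ is itself determined by the $a$-norm of $\beta$, the Levi-Civita connections of $a$ and of $h=e^{k}a$ differ by $\partial k$-terms that are coupled to $b$, and one must check that when the $a$-covariant derivative of $b_i$ is rewritten through $W_{i||j}$ these conformal corrections recombine so that the symmetric part reduces cleanly to the Killing equation with no residual $\partial k$ contamination. A second, genuinely non-classical point is that every polynomial-in-$y$ identity above lives only on the conic domain $A_x=\{\beta>0\}$; I would verify that $A_x$ is a full open half-space of directions, so that vanishing there still forces the vanishing of each homogeneous coefficient and the coefficient-matching argument remains valid despite the absence of the usual $y\mapsto-y$ symmetry.
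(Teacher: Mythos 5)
Your necessity direction reconstructs, in outline, exactly the route the paper relies on: the paper gives no proof of Theorem~\ref{Theorem 3.2} beyond citing the local computations of \cite{YO1}, \cite{YO2} (themselves resting on \cite{M2}) and observing that they survive restriction to the conic domain $A_x$. Within that outline, two of your points are genuinely right and well placed: the observation that the unit-length hypothesis kills any homothetic trace part --- from $W_{(i||j)}=\sigma h_{ij}$ one gets $2\sigma=2W^iW^jW_{i||j}=W^j\bigl(h(W,W)\bigr)_{||j}=0$ --- is precisely the mechanism separating the Kropina classification from the Randers one of \cite{BRS}, where $\sigma$ survives as a constant; and your remark that polynomial-in-$y$ identities holding only on the open half-space $A_x=\{\beta>0\}$ still force each homogeneous coefficient to vanish is exactly the caveat the paper flags after Proposition~\ref{Proposition 3.1}. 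Note, though, that the decisive step of necessity --- that coefficient matching in \eqref{scalar curvature} forces $W_{(i||j)}$ to be pure trace --- is asserted, not derived; like the paper, you delegate it wholesale to \cite{M2}, \cite{YO1}, \cite{YO2}, and that delegation is the entire computational content of the theorem.

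The genuine gap is your sufficiency argument. The advection picture (the flow $\phi_t$ of a Killing field $W$ consists of $h$-isometries with $\phi_{t*}W=W$, and carries $h$-geodesics to $F$-geodesics) yields at most a correspondence of geodesics; flag curvature is a second-order invariant governed by Jacobi fields, and the sentence ``an $h$-isometry preserves sectional curvature, hence the flag curvature reduces to $K$'' is a non sequitur: the flag curvature of $F$ is computed from the Berwald curvature of the $F$-spray via \eqref{scalar curvature}, not from the curvature of $h$, and the two are related only through the very spray expansion you deferred. To close the gap you must do one of two things: either (a) carry out the direct verification --- substitute $W_{i||j}$ antisymmetric and $R^h_{ijkl}=K(h_{ik}h_{jl}-h_{il}h_{jk})$ into the expression of $G^i$ in terms of the Levi-Civita spray of $h$, compute ${{R_0}^i}_{0l}$, and check \eqref{scalar curvature} on $A_x$, which is what \cite{YO1}, \cite{YO2} actually do and what the paper invokes; or (b) state and prove the nontrivial lemma that isometric advection maps $h$-Jacobi fields along $\gamma$ to $F$-Jacobi fields along $t\mapsto\phi_t(\gamma(t))$ with matching index data, so that constancy of curvature transfers. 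As written, your sufficiency paragraph is a plausibility argument, not a proof, and it is the one place where your proposal falls short of the computational proof the paper points to.
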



\section{On the existence of Kropina spaces.}\label{section 4}

In this section, we shall discuss the existence of globally defined Kropina spaces  on  an $n(\ge 2)$-dimensional differential manifold $M$ and give some examples.

Let us remark that the characterization of the Kropina space $(M,F=\frac{\alpha^2}{\beta})$ in terms
 of the Riemannian metric $h$ and the vector field $W$ used in  Theorem \ref{Theorem 3.2} requires the existence of a unit length vector field globally defined on $(M,h)$. This requirement imposes already topological restrictions on the manifold $M$. 
 
 Besides the notion of U-Kropina metric introduced already, the following definition is natural.

\begin{definition}\label{definition 5.1}
Let $(M, h)$ be an $n(\ge 2)$-dimensional Riemannian space and $W$ be a vector field globally defined on $M$. We denote by $F$ the Kropina metric constructed by means of these $h$ and $W$ as explained in Section \ref{section 1}. 

If $W$ is a unit length Killing vector field on $(M,h)$, then $F$ will be called {\it Kropina metric with unit Killing vector field}, or {\it UK-Kropina metric}.

Moreover, if  $W$ is a unit length Killing vector field on the Riemannian space $(M,h)$ of constant sectional curvature, then $F$ will be called {\it constant flag curvature Kropina metric}, or {\it CC-Kropina metric}.
\end{definition}

Obviously, the set of  U-Kropina metrics includes the set of UK-Kropina metrics. Similarly, the set of UK-Kropina metrics includes the set of CC-Kropina metrics.

We start by studying the existence of Kropina metrics that are represented by a Riemannian metric $h$ and a unit Killing vector field $W$ on $M$. These are the allowable spaces for 
UK-Kropina spaces and furthermore for 
CC-Kropina spaces.


\subsection{Globally defined U-Kropina metrics.}\label{subsection 4.1}

By definition it follows that 
any Riemannian manifold $(M,h)$ that admits a globally defined nowhere vanishing vector field $V$ can be endowed with a globally defined U-Kropina metric.

In order to see this, remark that for a Riemannian metric $h$ and a vector field $V$ on $M$ without zeros, it is enough to normalize $V$, i.e. we define
\begin{equation*}
W:=\frac{1}{\sqrt{h(V,V)}}V.
\end{equation*}
One can construct now a U-Kropina metric using $h$ and $W$ as described in Section \ref{section 1}.

The study of differential manifolds that admits nowhere vanishing vector fields is an old and basic topic in differential topology and we do not enter here in details. However, among known results we mention some that are connected with the examples given in this paper.

\begin{proposition}
Let $(M,h)$ be a Riemannian manifold that satisfies one of the following conditions
\begin{enumerate}
\item $M$ is connected, non compact,
\item $M$ is an odd dimensional sphere,
\item $M$ is compact and orientable with zero Euler-Poincar\'e characteristic.
\end{enumerate}
Then $M$ admits globally defined U-Kropina metrics.
\end{proposition}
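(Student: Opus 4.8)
The plan is to reduce the statement to a purely topological existence question and then invoke classical results. As observed just above, the construction of Section \ref{section 1} turns any nowhere-vanishing vector field into a globally defined U-Kropina metric: if $V$ is a vector field on $M$ without zeros, one normalizes $W:=V/\sqrt{h(V,V)}$ to obtain a globally defined unit vector field, and the pair $(h,W)$ then yields a U-Kropina metric on all of $M$. Hence it suffices to show that each of the three hypotheses guarantees the existence of a nowhere-vanishing vector field on $M$, and no Finslerian computation is needed beyond this normalization step.

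The central tool for the compact cases is the Poincar\'e--Hopf theorem: a closed manifold $M$ admits a nowhere-vanishing vector field if and only if its Euler-Poincar\'e characteristic $\chi(M)$ vanishes. Case (3) is then immediate, since $\chi(M)=0$ is assumed. For case (2), one recalls that any closed odd-dimensional manifold has $\chi(M)=0$ by Poincar\'e duality, so the odd sphere $\Sph^{2m-1}$ again falls under the same theorem; alternatively, one may exhibit an explicit field by viewing $\Sph^{2m-1}\subset \R^{2m}\cong \C^m$ and setting $V(z):=iz$, which is tangent to the sphere (being $h$-orthogonal to the position vector, since $\mathrm{Re}\langle iz,z\rangle=0$) and of constant unit length, hence nowhere zero.

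Case (1) is the one requiring a separate argument, since the Poincar\'e--Hopf theorem as stated above applies only to closed manifolds. Here I would use obstruction theory: the single obstruction to constructing a nowhere-vanishing section of $TM$ over the top cell lives in the top cohomology group $H^n(M)$, with $n=\dim M$ and with appropriate (possibly orientation-twisted) integer coefficients. For a connected non-compact $n$-manifold this group vanishes --- equivalently, such an $M$ has the homotopy type of a CW complex of dimension at most $n-1$ --- so the obstruction disappears and the desired section exists. A more hands-on alternative is to take a proper Morse function on $M$ and cancel, or push off to infinity, its isolated critical points, producing a nowhere-vanishing gradient-type field.

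The main obstacle is precisely this non-compact case: all of the actual work lies in justifying that the top-dimensional obstruction vanishes, i.e. that $H^n(M)=0$ for a connected non-compact $n$-manifold, whereas cases (2) and (3) are direct applications of a single classical theorem. Once this topological input is granted, the geometric conclusion follows from the normalization step with no further argument.
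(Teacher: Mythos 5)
Your proposal is correct and follows the same route as the paper: the paper's entire argument is exactly your reduction step (normalize a nowhere-vanishing field $V$ to the unit field $W=V/\sqrt{h(V,V)}$ and feed the pair $(h,W)$ into the Section \ref{section 1} construction), after which it simply cites the existence of nowhere-vanishing vector fields in the three cases as classical differential topology, without proof. Your added justifications --- Hopf's theorem for cases (2) and (3), the explicit field $V(z)=iz$ on $\Sph^{2m-1}\subset\C^m$ (which is precisely the field the paper itself uses later, in Section \ref{section 5}), and the vanishing of the top obstruction in $H^n(M)$ for connected non-compact $M$ --- are all standard and sound, so you have merely filled in details the paper deliberately omits.
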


\begin{remark}
One can directly construct concrete examples of U-Kropina metrics on such manifolds.
\end{remark}

\subsection{Globally defined UK-Kropina metrics.}\label{subsection 4.2}

In the following we will construct globally defined  UK-Kropina metrics on  Riemannian manifolds that admit a unit Killing vector field. 

Let us recall two important results on Killing vector fields of unit length that will be useful later on. 

\begin{lemma}{\rm (\cite{BN2})}\label{Lemma 4.4}
Let $W$ be a Killing vector field on a Riemannian space $(M, h)$. Then, the following conditions are equivalent:
\begin{enumerate}
\item $W$ has constant length on $M$,
\item $W$ is parallel with respect to the Levi-Civita connection of $h$,
\item every integral curve of the vector field $W$ is a geodesic in $(M, h)$.
\end{enumerate}
\end{lemma}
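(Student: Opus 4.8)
The unifying object is the $(1,1)$-tensor field $A:=\nabla W$, defined by $AX=\nabla_X W$. The Killing equation $W_{i||j}+W_{j||i}=0$ is precisely the statement that $A$ is skew-symmetric with respect to $h$, i.e. $h(AX,Y)=-h(X,AY)$ for all vector fields $X,Y$. From this I would extract the single scalar identity that drives the whole lemma: since $\tfrac12\,X\!\left(h(W,W)\right)=h(\nabla_X W,W)=h(AX,W)=-h(X,AW)$ for every $X$, the gradient of $\tfrac12\,h(W,W)$ equals $-\nabla_W W$. Consequently, on the connected manifold $M$, the function $h(W,W)$ is constant if and only if $\nabla_W W=0$, and by skew-symmetry this is in turn equivalent to $AW=0$.

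Given this identity, the plan is first to dispatch the implications that require no curvature. Condition $(2)$, namely $\nabla W=0$, trivially forces both $X\!\left(h(W,W)\right)=2h(\nabla_X W,W)=0$ and $\nabla_W W=0$, so $(2)\Rightarrow(1)$ and $(2)\Rightarrow(3)$ are immediate. For the core equivalence $(1)\iff(3)$ I would argue directly: an integral curve $\gamma$ of $W$ satisfies $\dot\gamma=W\circ\gamma$, hence $\nabla_{\dot\gamma}\dot\gamma=(\nabla_W W)\circ\gamma$; thus every integral curve is a geodesic if and only if $\nabla_W W$ vanishes along each of them, i.e. if and only if $\nabla_W W\equiv0$ on the set where $W\neq0$ (on the zero set both $\nabla_W W$ and the constancy condition hold automatically). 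Combining this with the scalar identity of the first paragraph yields $(3)\iff\nabla_W W=0\iff(1)$, which is the clean, elementary heart of the statement.

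The remaining and genuinely delicate link is the passage to $(2)$, deducing that $W$ is \emph{parallel} from constant length; this is where I expect the main obstacle. The first-order information $AW=0$ is by itself too weak to force $A\equiv0$, so the plan is to differentiate: I would invoke the second-order Killing identity $(\nabla_X A)Y=R(W,X)Y$ (equivalently $\nabla_X\nabla_Y W-\nabla_{\nabla_X Y}W=R(W,X)Y$, up to the sign convention for $R$) and differentiate the relation $h(AX,W)=0$, using skew-symmetry of $A$ together with the curvature identity to control $\nabla A$. The subtle point, and the real content of the cited result, is that upgrading $\nabla_W W=0$ to $\nabla W=0$ cannot follow from constancy of $|W|$ alone in full generality; one must feed in the interplay between $A$, its covariant derivative, and the curvature operator $R(W,\cdot)$. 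I would therefore treat this implication with the greatest care, isolating exactly which curvature-theoretic input is responsible for the conclusion, since the equivalence $(1)\iff(3)$ is already secured by the elementary argument above.
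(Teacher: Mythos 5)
Your elementary part is correct and is, in fact, the entire true content of the cited result: the identity $\tfrac12\,X\bigl(h(W,W)\bigr)=h(AX,W)=-h(X,\nabla_W W)$ for the skew-symmetric $A=\nabla W$ shows that constant length, $\nabla_W W=0$, and the geodesic property of the integral curves are mutually equivalent, and $(2)$ trivially implies all of these. This matches the statement actually proved in the Berestovskii--Nikonorov papers, where the three equivalent conditions are: constant length, $\nabla_W W=0$, and integral curves are geodesics.

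The implication you deferred, $(1)\Rightarrow(2)$, is not merely delicate --- it is false, and your plan to rescue it via the second-order Killing identity $(\nabla_X A)Y=R(W,X)Y$ cannot succeed. The Hopf vector field $W_z=(iz^1,\dots,iz^m)$ on the round sphere $\Sph^{2m-1}$, $m\ge 2$, is a unit length Killing field that is not parallel; the present paper itself verifies this in Section \ref{section 7}, where imposing $W_{i||j}=0$ on the spherical Killing fields forces $Q=0$ and $C=0$, contradicting $\mathbf{C}\cdot\mathbf{C}=K>0$ (this is exactly why Theorem \ref{Theorem 7.12} holds). So Lemma \ref{Lemma 4.4} as printed is a misquotation of \cite{BN2}: condition $(2)$ should read ``$\nabla_W W=0$'' (equivalently, $W$ is a geodesic vector field), not ``$W$ is parallel.'' Parallelism follows from constant length only under extra hypotheses, e.g.\ in dimension two (Lemma \ref{Lemma 4.5}) or when $\Ric(W,W)\equiv 0$ (as used in the proof of Theorem \ref{Theorem 4.18}). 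Note that the paper only ever invokes the true part $(1)\Rightarrow(3)$ (Examples \ref{cylinder} and \ref{torus}), so nothing downstream is affected; your proof, with item $(2)$ corrected, is complete.
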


\begin{lemma}{\rm (\cite{BN2})}\label{Lemma 4.5}
Each two-dimensional Riemannian manifold $M$ with a unit Killing vector field $W$ is locally Euclidean. Thus $M$ is isometric to the Euclidean plane or to one of the flat complete surfaces: the cylinder, the torus, the M\"obius band and the Klein bottle. 

Moreover, the vector field $W$ is parallel. 
\end{lemma}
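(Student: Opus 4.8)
The plan is to prove Lemma \ref{Lemma 4.5} by combining the two equivalent characterizations from Lemma \ref{Lemma 4.4} with the low-dimensional rigidity of Killing fields. First I would invoke Lemma \ref{Lemma 4.4}: since $W$ has constant (unit) length on $M$, it is automatically parallel with respect to the Levi-Civita connection of $h$, and every integral curve of $W$ is an $h$-geodesic. This immediately settles the last sentence of the statement, so the real content is the local flatness assertion together with the classification of complete flat surfaces.

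\medskip

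\noindent
For the local flatness, the key observation is that on a surface a unit parallel vector field $W$ determines, together with its $h$-orthogonal unit field $W^{\perp}$ (which exists locally, and globally where $M$ is orientable), a local $h$-orthonormal frame $\{W, W^{\perp}\}$. Because $W$ is parallel, one computes that $W^{\perp}$ is parallel as well: the connection $1$-form $\omega$ of the frame satisfies $\nabla W = \omega \otimes W^{\perp}$ and $\nabla W^{\perp} = -\omega \otimes W$, and parallelism of $W$ forces $\omega \equiv 0$. Hence both frame fields are parallel, the frame is covariantly constant, and the curvature two-form $d\omega$ vanishes. Since in dimension two the Gaussian curvature is read off precisely from $d\omega$, we conclude $K \equiv 0$, so $(M,h)$ is flat, i.e. locally isometric to the Euclidean plane. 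This is the step where I expect the main obstacle to lie: one must be careful that $W^{\perp}$ need not be globally defined if $M$ is non-orientable, so the argument is genuinely \emph{local}, and the global conclusion must be reached separately through the completeness classification rather than by globalizing the frame.

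\medskip

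\noindent
Finally, to obtain the explicit list of complete flat surfaces, I would appeal to the classical classification of complete flat two-dimensional Riemannian manifolds (a Bieberbach-type / quotient-of-$\mathbb{R}^2$ result): any complete flat surface is a quotient of the Euclidean plane $\E^2$ by a discrete group of isometries acting freely and properly discontinuously, and such quotients are exactly the plane, the cylinder, the torus, the M\"obius band, and the Klein bottle. To match this against the hypothesis, I would check that each of these five surfaces does in fact carry a unit Killing field---the translational field descending from $\E^2$ provides one in every case---and conversely that no other complete flat surface admits one. In practice the presence of the parallel unit field $W$ singles out a preferred translation direction in the universal cover $\E^2$, and the deck group must preserve it up to sign, which is exactly the constraint producing these five models and excluding none of them. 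Thus the lemma follows, with the flatness argument above as its analytic core and the Bieberbach classification supplying the enumeration.
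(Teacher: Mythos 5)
The paper gives no proof of Lemma \ref{Lemma 4.5} at all---it is quoted verbatim from \cite{BN2}---so your argument stands or falls on its own merits, and it has one genuine gap, sitting exactly at the analytic heart of the lemma: the claim that $W$ is parallel. You obtain parallelism by invoking the equivalence $(1)\Leftrightarrow(2)$ of Lemma \ref{Lemma 4.4}, but that equivalence, as stated in the paper, is false in general and is a misquotation of \cite{BN2}: the correct general statement is that a Killing field has constant length if and only if its integral curves are geodesics, i.e. $\nabla_W W=0$, which is strictly weaker than $\nabla W=0$. The paper's own constructions refute $(1)\Rightarrow(2)$: the unit Killing (Hopf-type) fields on $\Sph^3$ and $\Sph^{2m-1}$ in Sections \ref{section 4} and \ref{section 5} are not parallel---if they were, $R(\cdot,\cdot)W=0$ would force every plane containing $W$ to have zero sectional curvature, contradicting curvature $1$. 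So your first step assumes precisely the point that makes Lemma \ref{Lemma 4.5} nontrivial; parallelism must instead be derived from two-dimensionality, which your proof never uses for this step.

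The fix is short. Set $A(X):=\nabla_X W$. The Killing equation makes $\langle A(X),Y\rangle$ skew-symmetric in $X,Y$, and constancy of $|W|$ gives $\langle A(X),W\rangle=0$ for all $X$. Locally choose a unit field $E\perp W$, so that $A(X)=\mu(X)E$ for some $1$-form $\mu$; then skew-symmetry yields
\begin{equation*}
\mu(W)=\langle A(W),E\rangle=-\langle A(E),W\rangle=0,
\qquad
\mu(E)=\langle A(E),E\rangle=0,
\end{equation*}
whence $A\equiv 0$ and $W$ is parallel---and this is where $\dim M=2$ enters irreplaceably. With that repaired, the rest of your proof is sound: the connection-form computation correctly gives $\omega\equiv 0$, hence $d\omega=0$ and Gaussian curvature $K\equiv 0$ (alternatively, $R(\cdot,\cdot)W=0$ forces $K=0$ directly); your caution that $W^{\perp}$ is only locally defined on non-orientable surfaces is well placed; and the enumeration via the classification of complete flat surfaces, together with the translation field descending to each quotient (it survives the glide reflection defining the M\"obius band and the Klein bottle, even though the orthogonal field does not), is correct. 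Two minor points of hygiene: the ``isometric to'' clause of the lemma tacitly assumes completeness, as in \cite{BN2}, and since the five listed surfaces exhaust all complete flat surfaces, the converse check you propose (``no other complete flat surface admits one'') is vacuous rather than a needed step.
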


\begin{remark}
In the last two cases (i.e. the M\"obius band and the Klein bottle) the unit Killing vector field $W$ is 
quasiregular, i.e. there exits integral curves of $W$ of different length) and has the unique singular circle trajectory. One can see that $W$ is defined up to multiplication by -1. 

In all other cases, the vector field $W$ is regular, i.e. all its integral curves are closed and have one and the same length, and $W$ may have any direction.
\end{remark}

Without trying to list up all Riemannian manifolds admitting a nowhere zero  Killing vector field, we mention the most important cases.

Firstly, in {\bf low dimension}, we have

\begin{theorem} \label{Theorem 4.17}
If $M$ is one of the following
\begin{enumerate}
\item a locally Euclidean Riemannian surface, or
\item a compact connected simply connected three-dimensional Riemannian space,
\end{enumerate}
then $M$ admits a unit length Killing vector field, and hence a globally defined UK-Kropina metric. 
\end{theorem}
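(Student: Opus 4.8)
The plan is to treat the two cases separately, in each producing a Riemannian metric $h$ on $M$ together with a unit length Killing vector field $W$; the associated UK-Kropina metric is then furnished automatically by the construction of Section \ref{section 1} and Definition \ref{definition 5.1}. In both cases the engine is Lemma \ref{Lemma 4.4}, which tells us that on a flat manifold a parallel vector field is automatically of constant length and Killing, so that it suffices to exhibit a parallel unit field; in the three-dimensional case I will instead produce directly an isometric circle action whose generator is the desired Killing field.

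For case (1), I would appeal to the classification of complete flat surfaces recorded in Lemma \ref{Lemma 4.5}: such a surface is isometric to the Euclidean plane, the cylinder, the flat torus, the M\"obius band, or the Klein bottle, each presented as a quotient $\R^2/\Gamma$ with $\Gamma$ a group of Euclidean isometries acting freely. A constant vector field $v_0$ on $\R^2$ descends to $M$ precisely when $v_0$ is fixed by the linear parts of all elements of $\Gamma$. For the plane, the cylinder and the torus these linear parts are trivial, so any coordinate field such as $\partial/\partial x$ serves; for the M\"obius band and the Klein bottle the linear holonomy is generated by a reflection, and I take $v_0$ in its fixed direction. In every case the resulting field is constant, hence parallel and of unit length, and therefore Killing by Lemma \ref{Lemma 4.4}. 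In the two non-orientable cases this field is unique up to sign, in accordance with the Remark following Lemma \ref{Lemma 4.5}.

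For case (2), the decisive step is to identify the underlying manifold: by the Poincar\'e conjecture a compact, connected, simply connected three-manifold is diffeomorphic to the sphere $S^3$. I would then equip $S^3$ with its round metric and invoke the Hopf fibration $S^3 \to S^2$, whose fibres are the orbits of a free isometric action of $S^1$. The infinitesimal generator of this action is a nowhere vanishing Killing field of constant length, which for the round metric is already of unit length; transporting the resulting pair $(h,W)$ back along the diffeomorphism yields the required structure on $M$.

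I expect the main obstacle to be precisely this identification in case (2): the passage from the abstract topological hypotheses to the concrete model $S^3$ rests on the resolution of the Poincar\'e conjecture, which is the only deep external input. Once $M\cong S^3$ is granted, the Hopf field supplies the Killing field with no further difficulty. By comparison case (1) is routine; the single point demanding attention is the non-orientable subcase, where one must check that the chosen constant field is genuinely invariant under the deck group, so that it descends to a globally defined (rather than merely locally defined) unit Killing vector field.
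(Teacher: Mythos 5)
Your proposal is correct and follows essentially the same route as the paper: for case (1) the paper simply invokes the classification of complete flat surfaces behind Lemma \ref{Lemma 4.5}, and your covering-space argument (descending a constant field invariant under the linear holonomy of the deck group, including the reflection direction for the M\"obius band and Klein bottle) just makes that step explicit, while for case (2) the paper likewise uses Perelman's resolution of the Poincar\'e conjecture to identify $M$ with $\Sph^3$ and then takes the unit Killing field generating the Hopf circle action on the round metric. The only cosmetic slip is attributing ``parallel implies Killing'' to Lemma \ref{Lemma 4.4}, which presupposes that $W$ is already Killing; the implication you need is immediate anyway, since $W_{i||j}=0$ trivially gives $W_{i||j}+W_{j||i}=0$, so nothing in the argument is affected.
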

\begin{proof}
\begin{enumerate}
\item It follows from Lemma \ref{Lemma 4.5}.
\item The proof is based on the famous  {\it Poincar\'e conjecture} recently proved by G. Perelman (see \cite{KL} for a detailed review as well as \cite{BN1}, p. 6), namely that any arbitrary compact connected simply connected metrizable topological $3$-dimensional manifold $M$ with the second countability axiom is homeomorphic to the $3$-sphere $\Sph^3$.

Using this and taking into account  the geometry of $\Sph^3$ with the canonical Riemannian metric of sectional curvature 1, one can easily see that it admits a Killing vector field of unit length.

\end{enumerate}

$\qedd$
\end{proof}


\begin{example}(Euclidean plane)\label{UK-Kropina on E^2}

On the {\it Euclidean plane} $\E^2=\{(x^{1}, x^{2})|x^{1}, x^{2} \in \R\}$ endowed with the canonical Euclidean metric $h_{ij}=\delta_{ij}$,
 a  unit Killing vector field can be obtained by the parallel translation by a  given unit vector $v=(h^{1}, h^{2})$, 
where $(h^{1})^2+(h^{2})^2=1$. We define the $1$-parameter isometry group $\{\phi_t\}$ by
 \[	\phi_t : (x^{1}, x^{2})\longrightarrow (x^{1}+h^{1}t, x^{2}+h^{2}t)  \]
for every $t\in \R$.
The vector field $W$ defined by 
\[	W_{(x^{1}, x^{2})}:=\frac{d}{dt}\phi_t(x^{1}, x^{2})\bigg|_{t=0}=(h^{1}, h^{2})  \]
is Killing and of constant unit length. Actually this vector field is a parallel one.

Since $W_{(x^1, x^2)}=h^1\frac{\partial}{\partial x^1}+h^2\frac{\partial}{\partial x^1}$, the conic domain 
$A_{(x^1, x^2)}$ 
of the UK-Kropina metric 
induced by the navigation data $(h,W)$ is
\[A_{(x^1, x^2)}=\{(y^1, y^2)\in T_{(x^1, x^2)}\E^2|h^1y^1+h^2y^2>0\}.\]
\end{example}


\begin{example}(The cylinder)\label{cylinder}
 
On the $cylinder$ $S^1\times \R$, where $\R$ is the set of real numbers, endowed with the metric induced from the canonical Euclidean metric of $\E^3$, the 
 $\Sph^1$-action $\eta : \Sph^1\times (\Sph^1\times {\R})\longrightarrow \Sph^1\times {\R}$ is defined by
 \[	\eta (e^{ti}, (w, k))=(e^{ti}w, k),\hspace{0.2in} e^{ti}\in \Sph^1,\hspace{0.2in} (w, k)\in \Sph^1\times \R.  \]
 Denoting $(e^{ti}w, k)$  by  $\eta_t(w, k)$, it results that $\{ \eta_t \}$ is a $1$-parameter isometry group.
Hence, the vector field $W$ is defined by
\[	W_{(w, k)}:=\frac{d}{dt}\eta_{t}(w, k)\bigg|_{t=0}
	                  =(iw, 0)  \]
is Killing.
Since the metric $h$ on the cylinder $\Sph^1\times \R$ is given by
\[	h((w, u), (w, u))=|w|^2+u^2,  \]
we have
\[	h(W_{(w, k)}, W_{(w, k)})=h((iw, 0), (iw, 0))  =|iw|^2   =1.      \]
Therefore, $W$ is a unit Killing vector field.
 Furthermore, since all integral curves of a constant length Killing vector field on a Riemannian manifold are geodesics (see Lemma \ref{Lemma 4.4})
 , it follows that the closed curve $\eta_t(w, k)=(e^{ti}w, k)$ is a geodesic passing through the point $(w, k)$ on the cylinder  $S^1\times \R$.

In order to describe the conic domain of the UK-Kropina metric $F=\alpha^{2}/\beta$ induced by $(h,W)$ we make use of the real coordinates. Indeed, by putting $w:=\cos\theta+i\sin\theta\in\Sph^1$ it results
\begin{equation*}
W_{(w,k)}=(-\sin\theta,\cos\theta,0)=\frac{\partial}{\partial \theta},
\end{equation*}
and therefore in the basis $(\dfrac{\partial}{\partial\theta},\dfrac{\partial}{\partial k})$ of 
$T_{(w,k)}(\Sph^1\times\R)$ we have $W^1=1$, $W^2=0$. The conic domain reads
\begin{equation*}
A_{(w,k)}=\{(y^1,y^2)\in T_{(w,k)}(\Sph^1\times\R)|y^1>0\}.
\end{equation*}

 \end{example}
 

\begin{example}(The torus)\label{torus}

On the $torus$ $T^1=\Sph^1\times \Sph^1$ endowed with the metric induced by the canonical Hermitian metric on $\C^2$,  the $\Sph^1$-action $\eta : \Sph^1\times (\Sph^1\times \Sph^1)\longrightarrow \Sph^1\times \Sph^1$ is defined by
 \[	\eta (e^{it}, (w, \tilde{w}))=(e^{it}w, e^{ti}\tilde{w}),
	\hspace{0.2in}e^{it}\in \Sph^1, \hspace{0.2in}(w, \tilde{w})\in \Sph^1\times \Sph^1.  \]
 Denoting $(e^{it}w, e^{it}\tilde{w})$  by   $\eta_t(w, \tilde{w})$, it results again that $\{ \eta_t \}$ is a $1$-parameter isometry group, hence the vector field $X$ defined by
\[	X_{(w, \tilde{w})}:=\frac{d}{dt} (e^{it}w, e^{it}\tilde{w})\bigg|_{t=0}    =(iw, i\tilde{w})  \]
is Killing.    Since the metric $h$ on the torus $\Sph^1\times \Sph^1$ is given by
\[	h((w, \tilde{w}), (w, \tilde{w}))=|w|^2+|\tilde{w}|^2,  \]
we have
\[	h(X_{(w, \tilde{w})}, X_{(w, \tilde{w})})=h((iw, i\tilde{w}), (iw, i\tilde{w}))    =|iw|^2+|i\tilde{w}|^2    =2.   \]
Thus, the new vector field $W$  defined by
\[	W:=\frac{1}{\sqrt{2}}X  \]
is a unit length Killing vector field.
By the same reason as above it follows that the curve $(e^{it}w, e^{it}\tilde{w})$ is a closed geodesic passing through the point $(w, \tilde{w})$.

By putting 
\begin{equation*}
w:=\cos\theta^1+i\sin\theta^1,\qquad
\tilde w:=\cos\theta^2+i\sin\theta^2
\end{equation*}
it follows
\begin{equation*}
W_{(w, \tilde{w})}=\frac{1}{\sqrt{2}}(-\sin\theta^1,\cos\theta^1,-\sin\theta^2,\cos\theta^2)
=\frac{1}{\sqrt{2}}(\frac{\partial}{\partial \theta^1}+\frac{\partial}{\partial \theta^2})
\end{equation*}
and therefore in the basis $(\frac{\partial}{\partial \theta^1},\frac{\partial}{\partial \theta^2})$ we have $W^1=W^2=\frac{1}{\sqrt{2}}$. The conic domain of the UK-Kropina metric induced by $(h,W)$ is
\begin{equation*}
A_{(w, \tilde{w})}=\{(y^1,y^2)\in T_{(w, \tilde{w})}(\Sph^1\times\Sph^1)|y^1+y^2>0\}.
\end{equation*}


\end{example}


In {\bf higher dimension}, Lie groups are manifolds that admit unit length Killing vector fields. 
\begin{theorem}
If $M$ is one of the following
\begin{enumerate}
\item a compact Lie group,
\item a Lie group with a bi-invariant metric,
\end{enumerate}
then $M$ can be endowed with a globally defined UK-Kropina metric. 
\end{theorem}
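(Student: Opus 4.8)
The plan is to reduce the existence of a UK-Kropina metric to the construction of a globally defined unit length Killing vector field $W$, since by Definition \ref{definition 5.1} the pair $(h,W)$ then produces the desired metric via the navigation construction of Section \ref{section 1}. Both cases will be handled by exhibiting a suitable left-invariant vector field on the group, so the first observation is that case (1) reduces to case (2): every compact Lie group $G$ carries a bi-invariant Riemannian metric, obtained by averaging any left-invariant inner product on the Lie algebra $\mathfrak{g}=T_eG$ over $G$ with respect to the (bi-invariant) Haar measure, compactness guaranteeing that the integral converges. Hence it suffices to treat a Lie group $G$ equipped with a bi-invariant metric $h$.

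For such a $G$, I would fix a nonzero vector $v\in\mathfrak{g}$ normalized so that $h_e(v,v)=1$, and let $W$ be the left-invariant vector field with $W_e=v$, i.e. $W_g=(dL_g)_e v$. Two properties must then be checked. First, $W$ has constant unit length: since $h$ is left-invariant, $h_g(W_g,W_g)=h_e(v,v)=1$ for every $g\in G$, so in particular $W$ is nowhere vanishing. Second, $W$ is a Killing field. Here the key computation is that the flow of a left-invariant vector field is given by right translations, $\phi_t(g)=g\exp(tv)=R_{\exp(tv)}(g)$; because $h$ is bi-invariant, and in particular right-invariant, each $\phi_t$ is an isometry, so the flow of $W$ consists of isometries and $\mathcal{L}_W h=0$.

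With $W$ a globally defined unit length Killing field on $(G,h)$, applying the construction of Section \ref{section 1} to the navigation data $(h,W)$ yields a globally defined UK-Kropina metric, which settles both cases. One could optionally record, via Lemma \ref{Lemma 4.4}, that this $W$ is automatically parallel and that its integral curves are geodesics of $(G,h)$.

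I expect the only genuinely delicate point to be the verification that left-invariant vector fields are Killing for a bi-invariant metric, which hinges on correctly identifying the flow of a left-invariant field with \emph{right} translation rather than left translation; one must keep careful track of which invariance of $h$ is being invoked, since left-invariance is what supplies the constant length of $W$ while right-invariance is what supplies the Killing property. Everything else amounts to standard Lie-theoretic bookkeeping together with the classical averaging argument for the existence of a bi-invariant metric in the compact case.
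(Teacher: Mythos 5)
Your proposal is correct and follows essentially the same route as the paper: both arguments take a left-invariant vector field $W$ with $h_e(W_e,W_e)=1$, observe that its flow is by right translations so that bi-invariance makes it a Killing field of constant unit length, and then feed the pair $(h,W)$ into the navigation construction. The only cosmetic difference is that you reduce the compact case to the bi-invariant case via Haar-measure averaging, whereas the paper cites the existence of a conjugation-invariant inner product on $T_eG$ directly and then remarks that the same proof covers the bi-invariant case.
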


\begin{proof}
\begin{enumerate}
\item
Let $G$ be a compact Lie group, and let $L_h:G\longrightarrow G$ and $R_h\longrightarrow G$ denote left and right  multiplication, respectively. 

It is known that a compact Lie group $G$ admits a bi-invariant metric $g$ defined by
\[	g_p:=L_{p^{-1}}^*g_e, \]
where $g_e$ is a conjugation invariant metric on $T_e G$, $p\in G$ and $e$ is the identity (see for eg. \cite{V}). 

Choose a vector $v\in T_eG$ such that $g_e(v, v)=1$. Then, there exist a left-invariant vector field $W$ such that $W_e=v$.
If $\phi_t$ is the local flow of $W$ through $e$, then 
\[	W_h=\frac{\partial R_{\phi_t}(h)}{\partial t}\bigg|_{t=0}. \]

On the other hand, suppose that $Y$ and $Z$ are left-invariant vector fields.
Since $g$ is a bi-invariant metric, we have
\[	g(Y, Z)=g(R_{\phi_t*}L_{\phi_{-t}*}Y, R_{\phi_t*}L_{\phi_{-t}*}Z).   \]
Using the left invariance of $Y$ and $Z$, we get
\[	g(Y, Z)=g(R_{\phi_t*}Y, R_{\phi_t*}Z).  \]
This means that $R_{\phi_t*}$ is an isometry of $G$. Hence, it follows that the left invariant vector field $W$ is a 
Killing vector field and satisfies the condition $g_h(W_h, W_h)=g_e(v, v)=1$. 

\item If $M$ is a Lie group that admits a bi-invariant metric, then the same proof as in the compact case applies. 
\end{enumerate}

$\qedd$

\end{proof}

\begin{remark}
These are enough for our purpose, but many other spaces (for example symmetric and homogeneous Riemannian spaces) that admit globally defined UK-Kropina metrics can be obtained from \cite{BN1}, \cite{BN2}.
\end{remark}

On the other hand, taking into account the basis properties of Killing vector fields of constant length, we obtain the following rigidity result.

\begin{theorem}\label{Theorem 4.18}
If $M$ is one of the following
\begin{enumerate}
\item a compact even-dimensional Riemannian space with positive sectional curvature, 
\item a Riemannian space with negative Ricci tensor,
\item a two-dimensional Riemannian space whose sectional curvature at a point is negative,
\item an $n(\ge 3)$-dimensional Riemannian space of constant negative curvature,
\end{enumerate}
then $M$ does not admit a globally defined unit Killing field, nor globally defined UK-Kropina metrics. 
\end{theorem}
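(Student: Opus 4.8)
The plan is to reduce everything to the non-existence of a globally defined \emph{unit} Killing field, since by Definition \ref{definition 5.1} a UK-Kropina metric is built from exactly such a field. The main tool I would use is the pointwise Bochner identity for a Killing field $W$,
\begin{equation*}
\tfrac{1}{2}\Delta|W|^2=|\nabla W|^2-\Ric(W,W),
\end{equation*}
where $\Delta=\div\,\mathrm{grad}$. If $W$ has unit length then $|W|^2\equiv 1$, the left-hand side vanishes, and one obtains the pointwise relation
\begin{equation*}
\Ric(W,W)=|\nabla W|^2\ge 0 .
\end{equation*}
I would also record, via Lemma \ref{Lemma 4.4}, that a unit Killing field has geodesic integral curves, i.e. $\nabla_W W=0$; this is only auxiliary for the last case.

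With this inequality in hand, three of the four cases fall immediately. In case (2) the hypothesis $\Ric<0$ forces $\Ric(W,W)<0$ at every point for any unit field $W$, contradicting $\Ric(W,W)\ge 0$; hence no unit Killing field can exist. Case (4) is a special instance of this: a space of constant curvature $c<0$ in dimension $n\ge 3$ has $\Ric=(n-1)c\,h<0$, so the same contradiction applies. For case (3) I would simply quote Lemma \ref{Lemma 4.5}: a two-dimensional manifold carrying a unit Killing field is locally Euclidean, hence has identically vanishing Gauss (= sectional) curvature, which is incompatible with the curvature being negative at some point. Equivalently, in dimension two $\Ric(W,W)=K|W|^2=K$, so the Bochner inequality already gives $K\ge 0$ everywhere.

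The delicate case is (1), and this is where I expect the main obstacle, because a compact even-dimensional space of positive sectional curvature has positive Ricci curvature, which is perfectly consistent with $\Ric(W,W)\ge 0$; the Bochner argument yields nothing. Here I would instead invoke the classical theorem of Berger: on a compact even-dimensional Riemannian manifold of positive sectional curvature, every Killing field vanishes at some point. A unit-length field is nowhere zero, so none can exist. The even-dimensionality is genuinely essential and cannot be argued away, since the Hopf field on an odd-dimensional round sphere is a nowhere-zero unit Killing field on a positively curved space. Berger's result is itself a Synge-type second-variation argument: one takes a point where $|W|^2$ is minimal, observes that the $W$-orbit through it is a closed geodesic (as $\nabla_W W=0$ there), and uses even-dimensionality to produce a parallel normal field along this geodesic along which positive curvature strictly decreases the energy, contradicting minimality unless $W$ has a zero.

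In every case we conclude that $(M,h)$ carries no globally defined unit Killing field. Since by Definition \ref{definition 5.1} a UK-Kropina metric is constructed precisely from a globally defined unit Killing field, it follows at once that none of these manifolds admits a globally defined UK-Kropina metric, which is the assertion of the theorem.
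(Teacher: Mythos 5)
Your proof is correct and follows essentially the same route as the paper: Berger's theorem for the compact even-dimensional case, and nonnegativity of $\Ric(W,W)$ for a constant-length Killing field in the remaining cases. The only cosmetic differences are that you derive the inequality $\Ric(W,W)=|\nabla W|^2\ge 0$ from the Bochner formula where the paper cites it from [BN1], and you settle cases (3) and (4) via Lemma \ref{Lemma 4.5} and a reduction to the Ricci case, whereas the paper invokes the pointwise sectional-curvature inequality \eqref{curv_lemma} --- equivalent facts in those two situations.
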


\begin{proof}
\begin{enumerate}
\item It follows from the following {\it Theorem of Berger's} (\cite{Ber}):
every Killing vector field on a compact even-dimensional Riemannian space $(M, g)$ with positive sectional curvature vanishes at some point in $M$.
\item 
If $X$ is a  Killing vector field of unit length on an $n$-dimensional Riemannian space $(M, h)$, 
then the Ricci tensor $Ric$ of the space $(M, h)$ must satisfy the condition $Ric(X, X)\ge 0$.
Moreover, the equality $Ric(X, X)\equiv 0$ is equivalent to the parallelism of the vector field $X$ (see \cite{BN1} for details) and the statement follows.
\item It follows from the following property (see \cite{BN1} for details):
If $h$ and $X$ are as above, and denoting by $K$ and $R$ the sectional curvature and the curvature tensor of $h$, respectively, then 
 for any point $x\in T_M$, we have 
 \begin{equation}\label{curv_lemma}
	K(w, X_x)=g(R(w, X_x)X_x, w)\ge 0,   
	\end{equation}
where $w\in T_xM$ is unit length and it satisfies the condition $w\perp X_p$.
\item It follows immediately from \eqref{curv_lemma}.
\end{enumerate}
$\qedd$

\end{proof}


\begin{example}(The three dimensional sphere $\Sph^3$)

We consider $\Sph^3$ as a subset of $\E^4$ with the naturally induced metric, namely 
\[\Sph^3:=\{(x^1, x^2,x^3,x^4)\in \E^4|(x^1)^2+(x^2)^2+(x^3)^2+(x^4)^2=1\} \hookrightarrow \E^{4}
\]
with the parametrization 
\[x^1=\cos{u^3}\cos{u^1}, \hspace{0.1in}  x^2=\cos{u^3}\sin{u^1}, \hspace{0.1in}  x^3=\sin{u^3}\cos{u^2}, \hspace{0.1in}  x^4=\sin{u^3}\sin{u^2},\]
where $0\le u^1<2\pi$, $0\le u^2<2\pi$ and $0\le u^3\leq\pi/2$.   

The Riemannian metric $h$ on $\Sph^{3}$ reads
\[ds^2=\cos^2{u^3}(du^1)^2+\sin^2{u^3}(du^2)^2+(du^3)^2.\]

For $t\in [0,2\pi)$, we define the action  $\varphi_t: \Sph^3\longrightarrow \Sph^3$ 
as follows:
\begin{equation*}
\begin{split}
\varphi_t:&(\cos{u^3}\cos{u^1}, \cos{u^3}\sin{u^1}, \sin{u^3}\cos{u^2},\sin{u^3}\sin{u^2}) \\
 & \longmapsto (\cos{u^3}\cos{(u^1+t)}, \cos{u^3}\sin{(u^1+t)}, \sin{u^3}\cos{(u^2+t)},\sin{u^3}\sin{(u^2+t)}),
\end{split}
\end{equation*}
and put
\begin{eqnarray*}
  W_x:&=&\frac{\partial \varphi_t}{\partial t}\bigg|_{t=0}\\
            &=&-\cos{u^3}\sin{u^1}\frac{\partial}{\partial x^1}+\cos{u^3}\cos{u^1}\frac{\partial}{\partial x^2}
                                              -\sin{u^3}\sin{u^2}\frac{\partial}{\partial x^3}+\sin{u^3}\cos{u^2}\frac{\partial}{\partial x^4}\\
           &=&\frac{\partial}{\partial u^1}+\frac{\partial}{\partial u^2},
\end{eqnarray*}   
where $x=(x^1, x^2,x^3,x^4)\in \E^{4}$.
Hence, if we define
\[W_{x}:=W^1\frac{\partial}{\partial u^1}+W^2\frac{\partial}{\partial u^2}+W^3\frac{\partial}{\partial u^3},\]
we have $W^1=W^2=1$ and $W^3=0$.

It follows $|W|=\sqrt{h_{11}(W^1)^2+h_{22}(W^2)^2+h_{33}(W^3)^2}=\sqrt{\cos^2{u^3}+\sin^2{u^3}}=1$, i.e. $W$ is a unit vector field on $\Sph^{3}$.

A straightforward computation shows that $W$ is in fact a unit length Killing vector field on $\Sph^{3}$.
Moreover, the conic domain of the UK-Kropina metric $F$ induced by the navigation data $(h,W)$ is  
$A_x=\{(v^1, v^2, v^3)\in T_x\Sph^3|v^1+v^2>0\}$.

\end{example}


\section{Kropina spaces of constant curvature.}\label{section 5}

From
Theorem \ref{Theorem 3.2} it follows that CC-Kropina structures can be constructed only on Riemannian spaces of constant sectional curvature.

\begin{remark}
Let $(M, h)$ be one of such Riemannian spaces. Then $(M, h)$ is locally isometric to a Riemannian space form (see Corollary 2.2 in Chapter 8 of \cite{Ca}). Namely, for any point $p\in M$ there exists a coordinate neighborhood $U$ of $p$ which is isometric to a coordinate neighborhood $\tilde{U}$ of a space form. Denoting the isometry between $U$ and $\tilde{U}$ by $\phi$, from Lemma 7.3 which is shown later, $\phi$ lifts to a conic isometry between the Kropina metric on $U$ constructed by means of the unit Killing vector field $W$
and the Kropina metric on $\tilde{U}$ constructed by means of $\phi_*(W)$.
Hence, there is no harm in considering the Kropina spaces on Riemannian space forms.
\end{remark}

It is well known that, up to local Riemannian isometry, the universal covering of a Riemannian space 
$(M,h)$ of constant sectional curvature $K$
 is one of the 
model spaces
 \begin{enumerate}
\item $\mathbb H^n$, if $K=-1$,
\item $\E^n$, if $K=0$,
\item $\Sph^n$, if $K=1$.
\end{enumerate}

\begin{remark}
It is known that in order to obtain a Riemannian metric of constant sectional curvature $K$ it is enough to multiply one of the above models' metrics by $\frac{1}{K}$.
\end{remark}

Remark next that from Theorem \ref{Theorem 4.18}  it follows that $\mathbb H^n$ and $\Sph^{2m}$ does not admit a Killing vector field of constant length.
Hence, globally defined UK-Kropina metrics cannot be constructed on them.

On the other hand, we can easily construct Killing vector fields of constant length on $\E^n$ and $\Sph^{2m-1}$.

We have studied the case of $\E^2$ in Example \ref{UK-Kropina on E^2}. It can be extended to arbitrary dimension as follows.
 Choose any unit vector $\textbf{a}=(a^1, \cdots, a^n)\in \Sph^{n-1}$.
For any element ${\bf x}=(x^1, \cdots, x^n)\in \E^n$, define a mapping $\varphi_t : \E^n \longrightarrow \E^n$ by 
\[	\varphi_t : {\bf x}=(x^1, \cdots, x^n) \longmapsto {\bf x}+t\textbf{a}=(x^1+t a^1, \cdots, x^n+t a^n). \]

Since $\{\varphi_t\}$ is a 1-parameter isometry group of $\E^n$, the vector field $W$ defined by
\[	  W_{{\bf x}}:=\frac{d}{dt}\varphi_t({\bf x}) \bigg|_{t=0}={\bf a}  \]
is a unit length Killing vector field.

Next, we consider the case of $\Sph^{2m-1}$ regarded as the subset of $\mathbb{C}^m$ given by
\[	\Sph^{2m-1}=\{(z^1, \cdots, z^m)| z^i\in \mathbb{C} , 1\le i \le m, \ |z^1|^2+\cdots +|z^m|^2=1\}.\]
Putting $s=e^{it}\in \Sph^1$ and defining an $\Sph^1$-action $\varphi_t$ on $\Sph^{2m-1}$ by
\[	\varphi_t : (z^1, \cdots, z^m) \longmapsto (sz^1, \cdots, sz^m)=(e^{it}z^1, \cdots, e^{it}z^m), \]
it follows that $\{\varphi_t\}$ is a 1-parameter isometry group of $\Sph^{2m-1}$.
The vector field $W$ defined by
\[	W_z:=\frac{d}{dt}\varphi_t(z)  \bigg|_{t=0}=(iz^1, \cdots, iz^m),  \]
where $z=(z^1, \cdots, z^m)$, is a   Killing vector field of unit length because of 
\[	|W_z|^2=|iz^1|^2+ \cdots +|iz^m|^2=|z^1|^2+ \cdots +|z^m|^2=1.\]

Therefore, we get

\begin{theorem}\label{Theorem 4.20}
The only manifolds (up to Riemannian local isometry) that admits CC-Kropina structures are the Euclidean space $\E^n$, $n\geq 2$  and odd dimensional spheres $S^{2m-1}$, $m\geq 2$.
\end{theorem}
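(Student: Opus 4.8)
The plan is to assemble the statement from structural results already established, since the genuine analytic content has been front-loaded into Theorem~\ref{Theorem 4.18}. By Definition~\ref{definition 5.1} a CC-Kropina metric is precisely one built from a unit-length Killing vector field $W$ on a Riemannian space $(M,h)$ of constant sectional curvature, and by Theorem~\ref{Theorem 3.2} these are exactly the navigation data producing a Kropina metric of constant flag curvature. I would therefore reduce the classification to one question: which constant-curvature spaces carry a globally defined unit Killing field?

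First I would invoke the classification recalled above, together with the opening remark of this section, which allows me to work directly on the simply connected space-form models: up to local Riemannian isometry a space of constant curvature $K$ is $\mathbb H^n$ (if $K<0$), $\E^n$ (if $K=0$), or $\Sph^n$ (if $K>0$), and any Riemannian isometry lifts to a conic isometry of the associated Kropina metrics. Next I would eliminate the inadmissible models using Theorem~\ref{Theorem 4.18}. Its items (3) and (4) show that a space of constant negative curvature admits no unit Killing field (the two-dimensional case being covered by (3), the case $n\ge 3$ by (4)), which rules out $K<0$. Its item (1), Berger's theorem, shows that every Killing field on a compact even-dimensional space of positive sectional curvature vanishes somewhere, excluding the even spheres $\Sph^{2m}$. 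This leaves exactly $K=0$, giving $\E^n$, and $K>0$ with odd dimension, giving $\Sph^{2m-1}$.

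It then remains to verify that each surviving model genuinely supports such a structure, and this is precisely what the explicit constructions immediately preceding the statement supply. On $\E^n$ the parallel field $W_{\bf x}={\bf a}$ determined by a unit vector ${\bf a}\in\Sph^{n-1}$ is a unit Killing field, while on $\Sph^{2m-1}\subset\C^m$ the field $W_z=(iz^1,\dots,iz^m)$ generated by the action $z\mapsto e^{it}z$ has unit length and is Killing. Feeding either datum into the navigation construction of Section~\ref{section 1} produces the required CC-Kropina metric. The dimensional restrictions $n\ge 2$ and $m\ge 2$ merely record that a Kropina manifold must have dimension at least two, which discards the one-dimensional sphere $\Sph^1$.

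I do not anticipate a serious obstacle, as the rigidity theorem has already absorbed the curvature-theoretic work---Berger's theorem together with the Ricci and sectional-curvature estimates for constant-length Killing fields---so the present argument is essentially the synthesis of the classification with those obstructions and the explicit examples. The only point deserving genuine care is the passage ``up to local isometry'': one must ensure that replacing $(M,h)$ by its space-form model neither creates nor destroys a unit Killing field, which is exactly what the conic-isometry lifting in the opening remark guarantees.
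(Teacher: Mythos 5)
Your proposal is correct and follows essentially the same route as the paper: reduction via Theorem \ref{Theorem 3.2} to the existence of a unit Killing field on a constant-curvature space form, exclusion of $\mathbb H^n$ and $\Sph^{2m}$ via Theorem \ref{Theorem 4.18}, and existence on $\E^n$ and $\Sph^{2m-1}$ via the same explicit parallel and Hopf-type constructions given just before the theorem. If anything, you are slightly more careful than the paper in pinpointing which items of Theorem \ref{Theorem 4.18} handle the negative-curvature cases and in flagging the conic-isometry lifting needed for the ``up to local isometry'' clause.
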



Inspired by \cite{BRS} we will attempt a classification of CC-Kropina structures. We start by recalling the following important result:

\begin{lemma}{\rm (\cite{BRS})}\label{Basic Lemma}
Let $P_i=P_i(x)$ be solutions of the following system:
\begin{equation*}\label{equation 1.1}
	\frac{\partial P_i}{\partial x^j}+\frac{\partial P_j}{\partial x^i}=0.  
\end{equation*}
Then
\[	P_i=Q_{ij}x^j+C_i,  \]
where $(C_i)$ is an arbitrary constant row vector and $Q=(Q_{ij})$ is an arbitrary constant skew-symmetric matrix, i.e.,  $Q_{ij}=-Q_{ji}$.
\end{lemma}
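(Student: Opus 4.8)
The plan is to recognize the system $\partial P_i/\partial x^j + \partial P_j/\partial x^i = 0$ as the Killing equation for the flat Euclidean metric, whose solutions are the infinitesimal isometries; concretely, I would show that each component $P_i$ is necessarily an affine function of $x$, and then simply read off the skew-symmetry constraint. Writing $A_{ij} := \partial P_i/\partial x^j$, the hypothesis says precisely that $A_{ij} + A_{ji} = 0$, i.e. the Jacobian matrix $A$ is pointwise skew-symmetric. The whole content of the lemma is to upgrade this to the stronger statement that $A$ is in fact \emph{constant}.

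To do that, the key step is a standard cyclic-differentiation trick. First I would write down the hypothesis three times, for the index pairs $(i,j)$, $(i,k)$ and $(j,k)$, and differentiate these once more with respect to $x^k$, $x^j$ and $x^i$ respectively, obtaining the three relations
\begin{equation*}
\partial_k\partial_j P_i + \partial_k\partial_i P_j = 0,\qquad
\partial_j\partial_k P_i + \partial_j\partial_i P_k = 0,\qquad
\partial_i\partial_k P_j + \partial_i\partial_j P_k = 0.
\end{equation*}
Using the commutativity of second partial derivatives $\partial_a\partial_b = \partial_b\partial_a$, I would then form the combination (first)~$+$~(second)~$-$~(third). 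A short bookkeeping shows that every term cancels except one, leaving $2\,\partial_j\partial_k P_i = 0$. Hence every second partial derivative of every $P_i$ vanishes identically.

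Having established $\partial_j\partial_k P_i \equiv 0$ for all indices, each $P_i$ is an affine function, so there exist constants $Q_{ij}$ and $C_i$ with $P_i = Q_{ij}x^j + C_i$. Substituting back into the original system gives $\partial_j P_i = Q_{ij}$, so the hypothesis reduces to $Q_{ij} + Q_{ji} = 0$; that is, $Q = (Q_{ij})$ is skew-symmetric, while the $C_i$ remain entirely unconstrained. This is exactly the claimed form, and conversely any such affine $P_i$ clearly solves the system, so the description is complete.

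I expect the only genuinely delicate point to be the index bookkeeping in the cyclic combination of the second step, where it is easy to drop a sign or to mismatch a pair of terms; but this is purely mechanical once the three differentiated equations are laid out side by side. Everything else, namely the passage from vanishing second derivatives to the affine form and the back-substitution recovering skew-symmetry, is routine.
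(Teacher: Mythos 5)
Your proof is correct. The paper itself gives no proof of this lemma --- it is quoted verbatim from \cite{BRS} --- and your argument is exactly the standard one used there: pointwise skew-symmetry of the Jacobian, the cyclic second-differentiation trick yielding $2\,\partial_j\partial_k P_i=0$ (your sign bookkeeping checks out: after commuting second partials, the combination (first)$+$(second)$-$(third) does collapse to $2\,\partial_j\partial_k P_i$), hence affinity, and back-substitution forcing $Q_{ij}=-Q_{ji}$. The only implicit hypotheses worth flagging are that the $P_i$ are $C^2$ and defined on a connected domain, so that vanishing second derivatives give a single global affine expression; both are satisfied in every application the paper makes of the lemma.
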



\subsection{The Euclidean case.}
The first Riemannian space form we consider is the standard Euclidean space.
The admissible vector fields $W$ for CC-Kropina structures are described in the following proposition.

\begin{proposition}\label{Proposition 6.3}
Let $(\E^n, F=\alpha^2/\beta)$ be the Kropina space induced by the navigation data made of the flat metric $h=(\delta_{ij})$  and a vector field $W=W^i(\partial/\partial x^i)$ of unit length  on $\E^n$, where $(x^i)$ are the standard coordinates of $\E^n$.
Then, $(\E^n, F)$ is of constant flag curvature $K=0$ if and only if $W$ has the form
\[	W^i(x)=C^i,  \]
where  $(C^i)$ is a  column vector of unit length.
\end{proposition}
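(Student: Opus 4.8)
The plan is to combine the curvature characterization of Theorem \ref{Theorem 3.2} with the explicit description of Euclidean Killing fields furnished by Lemma \ref{Basic Lemma}, and then to cut this description down using the unit-length hypothesis. Since the flat metric $h=(\delta_{ij})$ already has constant sectional curvature $K=0$, the second condition of Theorem \ref{Theorem 3.2} is automatically satisfied with $K=0$. Hence $(\E^n,F)$ is of constant flag curvature $K=0$ precisely when $W$ is a Killing vector field on $(\E^n,\delta)$ (it is a unit field by hypothesis), and no value of $K$ other than $0$ can occur, because the underlying Riemannian space is flat.

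First I would rewrite the Killing equation in the standard Cartesian coordinates. For the flat metric the Levi-Civita connection has vanishing Christoffel symbols, so covariant derivatives coincide with ordinary partial derivatives and $W_i=\delta_{ij}W^j=W^i$. The condition $W_{i||j}+W_{j||i}=0$ therefore reduces to
\[
\frac{\partial W^i}{\partial x^j}+\frac{\partial W^j}{\partial x^i}=0,
\]
which is exactly the system treated in Lemma \ref{Basic Lemma}. Applying that lemma yields $W^i(x)=Q_{ij}x^j+C^i$, where $(C^i)$ is a constant vector and $Q=(Q_{ij})$ is a constant skew-symmetric matrix.

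The decisive step is then to substitute this form into the unit-length requirement $|W|^2=\delta_{ij}W^iW^j=1$ and to use that the identity must hold for every $x\in\E^n$. Expanding $\sum_i (Q_{ij}x^j+C^i)^2$ and grouping by degree in $x$, the coefficient of the purely quadratic part is $\sum_i Q_{ij}Q_{ik}=(Q^{\mathsf T}Q)_{jk}$, which must vanish for all $j,k$. Taking the trace forces $\sum_{i,j}Q_{ij}^2=0$, whence $Q=0$; the remaining constant term then gives $\sum_i(C^i)^2=1$, i.e. $|C|=1$. This collapse of the rotational part $Q$ is the crux of the argument, and I expect it to be the main (though brief) obstacle, since it is exactly the place where skew-symmetry and the positive-definiteness of the Euclidean norm cooperate to eliminate every term but the constant one. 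We conclude that $W^i(x)=C^i$ with $(C^i)$ of unit length.

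Finally, for the converse it suffices to observe that a constant unit vector field $W^i=C^i$ trivially satisfies $\partial_j W^i+\partial_i W^j=0$ and has unit length, so it is a unit Killing field on the flat space $(\E^n,\delta)$. Theorem \ref{Theorem 3.2} then guarantees that the induced Kropina metric is of constant flag curvature $K=0$, which establishes both implications and completes the proof.
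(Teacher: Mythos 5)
Your proposal is correct and follows essentially the same route as the paper: reduce constant flag curvature $K=0$ to the unit Killing condition via Theorem \ref{Theorem 3.2}, solve the Killing equation with Lemma \ref{Basic Lemma} to get $W_i=Q_{ij}x^j+C_i$, and kill $Q$ by comparing degrees in $x$ in the identity $|W|^2\equiv 1$. Your trace argument $(Q^{\mathsf T}Q)_{jk}=0\Rightarrow\sum_{i,j}Q_{ij}^2=0\Rightarrow Q=0$ merely makes explicit a step the paper states without justification ("all ${Q^i}_r$ must vanish"), which is a welcome touch of rigor rather than a deviation.
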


\begin{proof}
Obviously $(\E^n, F)$ is of constant flag curvature $K=0$ if and only if $W=W^i(\partial/\partial x^i)$ is a Killing vector field of unit length on $(\E^n,h)$, that is
\begin{equation}\label{condition 1}
W_{i\vert\vert j}+W_{j\vert\vert i}=0,
\end{equation}
where the indices are lowered by $h_{ij}=\delta_{ij}$,  the symbol $_{||}$ represents the covariant derivative with respect to the Euclidean metric  $h$, and 
\begin{equation}\label{condition 2}
\sum_{i=1}^n(W_i)^2=1.
\end{equation}

First, we consider the condition (\ref{condition 1}).
Keeping in mind that in this case the covariant derivative  $_{||}$ is simply partial differentiation, we have
\[\frac{\partial W_i}{\partial x^j}+\frac{\partial W_j}{\partial x^i}=0.   \]
From Lemma \ref{Basic Lemma}, it follows that  $W$ is of the form
\begin{equation}\label{equation 1.2}
	W_i=Q_{ij}x^j+C_i,
\end{equation}
where $(C_i)$ is an arbitrary constant row vector and $Q=(Q_{ij})$ is an arbitrary constant skew-symmetric matrix, that is,  $Q_{ij}=-Q_{ji}$.
Therefore, $W$ is a Killing vector field if and only if $W$ is in the form (\ref{equation 1.2}).

Next, we consider the condition (\ref{condition 2}). Substituting (\ref{equation 1.2}) in (\ref{condition 2}), we have
\[	\sum_{i=1}^n({Q^i}_jx^j+C^i)^2=1,   \]
that is,
\[	\sum_{i=1}^n {Q^i}_r{Q^i}_sx^rx^s+2\sum_{i=1}^n {Q^i}_rx^rC^i  + \sum_{i=1}^n(C^i)^2=1,  \]
where ${Q^i}_j=\delta^{ir}Q_{rj}$.
Since the above equation must  hold good for any $(x^i)$, it follows that all ${Q^i}_r$ must vanish and $\sum_{i=1}^n(C^i)^2=1$.
Hence, the conclusion  follows.

$\qedd$
\end{proof}


\subsection{The spherical case.}\label{subsection spherical case}
\subsubsection{The projective coordinate system on a unit  $n$-sphere.}

Consider the $(n+1)$-dimensional Euclidean space $\E^{n+1}$ and an $n$-sphere
\[	\Sph^n:=\{(x^0, x^1, \cdots, x^n) | (x^0)^2+(x^1)^2+\cdots+(x^n)^2=1\}.  \]
We define the eastern and western hemispheres by
\[	\Sph^n_+=\{(x^0, x^1, \cdots, x^n) |   (x^0, x^1, \cdots, x^n)\in \Sph^n,\hspace{0.2in} x^0>0\}  \]
and 
\[	\Sph^n_-=\{(x^0, x^1, \cdots, x^n) |  (x^0, x^1, \cdots, x^n)\in \Sph^n,\hspace{0.2in} x^0<0\} , \]
respectively.
We call the points $p_+:=(1, 0, \cdots, 0)$ and $p_-:=(-1, 0, \cdots, 0)$ the eastern pole and the western pole, respectively.

We consider the tangent spaces $T\Sph^n_+ :=\{  (1, x^1, x^2, \cdots, x^n)\} \simeq \E^n$ and  $T\Sph^n_- :=\{  (-1, x^1, x^2, \cdots, x^n)\} \simeq \E^n$ at the eastern pole and   the western pole, respectively.     The projection 
\[ \varphi_{\pm} :    T\Sph^n_{\pm}\simeq \E^n    \longrightarrow \Sph^n_{\pm}  \]
is defined by  
\[\varphi _{\pm}: \textbf{x}      \longmapsto          \bigg(        \frac{\pm 1}{\sqrt{1+\textbf{x}\cdot \textbf{x}}},    \hspace{0.1in}  \frac{1}{\sqrt{1+\textbf{x}\cdot \textbf{x}}}\textbf{x}    \bigg),  \]	 
where the notation $"\cdot"$ stands for the standard inner product on $\E^n$.

For any curve $\textbf{x}(t)$  on $T\Sph^n_{\pm}\simeq \E^n$, we get the curve 
\[c_{\pm}(t):=\bigg(  \frac{\pm1}{\sqrt{1+\textbf{x}(t)\cdot \textbf{x}(t)}},          \hspace{0.1in}   \frac{1}{\sqrt{1+\textbf{x}(t)\cdot \textbf{x}(t)}}\textbf{x}(t)          \bigg)  \]
on $\Sph^n_{\pm}$,
and therefore
\begin{eqnarray}\label{equation 1.3}
     &&c'_{\pm}(t)\\
       &:=&\bigg(  \frac{ \mp \textbf{x}(t)\cdot \textbf{x}'(t)}{(\sqrt{1+\textbf{x}(t)\cdot \textbf{x}(t)})^3 },           \hspace{0,2in} 
                           -\frac{\textbf{x}(t)\cdot \textbf{x}'(t)}{(\sqrt{1+\textbf{x}(t)\cdot \textbf{x}(t)})^3 }\textbf{x}(t)
	                                     +\frac{1}{\sqrt{1+\textbf{x}(t)\cdot \textbf{x}(t)}}\textbf{x}'(t)
	         	           \bigg)  \nonumber
\end{eqnarray}

Denoting the metric on $\Sph^n$ by $h$ and putting $\textbf{x}'(t)=\textbf{y}(t)$, we get
\begin{eqnarray*}
	&&h(c'_\pm(t), c'_\pm(t))\\
	&=&  \frac{(\textbf{x}(t)\cdot \textbf{y}(t))^2}{(1+\textbf{x}(t)\cdot \textbf{x}(t))^3 }
                    + \frac{(\textbf{x}(t)\cdot \textbf{y}(t))^2      (\textbf{x}(t)\cdot \textbf{x}(t))}{(1+\textbf{x}(t)\cdot \textbf{x}(t))^3 }
	            -\frac{2(\textbf{x}(t)\cdot \textbf{y}(t))^2}{(1+\textbf{x}(t)\cdot \textbf{x}(t))^2}
	            +  \frac{\textbf{y}(t)\cdot \textbf{y}(t)}{1+\textbf{x}(t)\cdot \textbf{x}(t)}              \\
           &=& \frac{\textbf{y}(t)\cdot \textbf{y}(t)}{1+\textbf{x}(t)\cdot \textbf{x}(t)}
             -\frac{(\textbf{x}(t)\cdot \textbf{y}(t))^2}{(1+\textbf{x}(t)\cdot \textbf{x}(t))^2},
\end{eqnarray*}
that is,
\[	h(c'_\pm(t), c'_\pm(t))=\frac{(\textbf{y}(t)\cdot \textbf{y}(t))(1+\textbf{x}(t)\cdot \textbf{x}(t)) 
	                -(\textbf{x}(t)\cdot \textbf{y}(t))^2  }{(1+\textbf{x}(t)\cdot \textbf{x}(t))^{2}}  \]
Hence, the length of the tangent vector $\textbf{y}\in T_{\textbf{x}}\E^n$ with respect to the metric $h$ is given by
\begin{equation*}
	h(\textbf{y}, \textbf{y})=\frac{(\textbf{y}\cdot \textbf{y})(1+\textbf{x}\cdot \textbf{x}) 
	                -(\textbf{x}\cdot \textbf{y})^2  }{(1+\textbf{x}\cdot \textbf{x})^{2}}.
\end{equation*}


\subsubsection{The $(2m-1)$-sphere. }\label{sec.6.2.2}

Let $(M, h)$ be a $(2m-1)$-sphere endowed with the standard Riemannian metric of constant sectional curvature $K(>0)$. This is the only sphere admitting a Killing vector field of unit length, as already shown. Multiplying the standard Riemannian metric on $\Sph^{2m-1}$ 
by $1/{K}$  we have
\[	h(\textbf{y}, \textbf{y})=\frac{1}{K}\frac{(\textbf{y}\cdot \textbf{y})(1+\textbf{x}\cdot \textbf{x}) 
	                -(\textbf{x}\cdot \textbf{y})^2  }{(1+\textbf{x}\cdot \textbf{x})^{2}},  \]
or, equivalently, by putting $\textbf{x}=(x^1, \cdots, x^{2m-1})$ and $\textbf{y}=(y^1, \cdots, y^{2m-1})$, we get
\[	h(\textbf{y}, \textbf{y})=\frac{1}{K}\frac{(1+\textbf{x}\cdot \textbf{x})\sum_{i=1}^{2m-1}(y^i)^2-\sum_{i,j=1}^{2m-1}x_ix_jy^iy^j}
	                {(1+\textbf{x}\cdot \textbf{x})^{2}},   \]
where $x_i=\delta_{ij}x^j$, $i,j=1,\dots,2m-1$ and thus
\begin{equation}\label{equation 1.5}
	h_{ij}=\frac{1}{K}\bigg(\frac{\delta_{ij}}{(1+\textbf{x}\cdot \textbf{x})}-\frac{x_ix_j}{(1+\textbf{x}\cdot \textbf{x})^2}\bigg),\quad i,j=1,\dots,2m-1.
\end{equation}
From the  equation (\ref{equation 1.5}),  we have
\begin{equation}\label{equation 1.6}
	h^{ij}=K(1+\textbf{x}\cdot \textbf{x})(\delta ^{ij}+x^ix^j),
\end{equation}
and from here
\begin{eqnarray*}
	\frac{\partial h_{ij}}{\partial x^k}
	=\frac{1}{K}\bigg\{-\frac{2x_k\delta_{ij}}{(1+\textbf{x}\cdot \textbf{x})^2}
	       -\frac{\delta_{ik}x_j+\delta_{jk}x_i}{(1+\textbf{x}\cdot \textbf{x})^2}
	  +\frac{4x_ix_jx_k}{(1+\textbf{x}\cdot \textbf{x})^3}\bigg\}
\end{eqnarray*}
and 
\begin{eqnarray*}
	^h\gamma_{jrk}&=&\frac{1}{2}\bigg\{\frac{\partial h_{kr}}{\partial x^j}
	                +\frac{\partial h_{jr}}{\partial x^k}
                       -\frac{\partial h_{jk}}{\partial x^r}\bigg\}\\
           &=&\frac{1}{2K}\bigg\{
               -\frac{2x_j\delta_{kr}}{(1+\textbf{x}\cdot \textbf{x})^2}
	          -\frac{2x_k\delta_{jr}}{(1+\textbf{x}\cdot \textbf{x})^2} 
	         +\frac{4x_kx_rx_j}{(1+\textbf{x}\cdot \textbf{x})^3}  \bigg\}\\
           &=&-\frac{1}{K(1+\textbf{x}\cdot \textbf{x})^2}\bigg(
               x_j\delta_{kr}	          +x_k\delta_{jr}
	         -\frac{2x_kx_rx_j}{1+\textbf{x}\cdot \textbf{x}}  \bigg).
\end{eqnarray*}
Using (\ref{equation 1.6}), we get
\begin{eqnarray*}
	^h{{\gamma_j}^i}_k&=&-\frac{1}{1+\textbf{x}\cdot \textbf{x}}(\delta ^{ir}+x^ix^r)
	                                \bigg(   x_j\delta_{kr}	          +x_k\delta_{jr}
	                            -\frac{2x_kx_rx_j}{1+\textbf{x}\cdot \textbf{x}}  \bigg) \\
	&=&-\frac{1}{1+\textbf{x}\cdot \textbf{x}}
	         \bigg(x_j {\delta^i}_k   +x_k{\delta^i}_j \bigg).	
\end{eqnarray*}

Therefore, we have
\begin{equation*}
W_{i\vert\vert j}=\frac{\partial W_i}{\partial x^j}-{W_r}^h{{\gamma_i}^r}_j
	         =\frac{\partial W_i}{\partial x^j}+\frac{1}{1+\textbf{x}\cdot \textbf{x}}
                                (x_i W_j   +x_jW_i ).
\end{equation*}

The equation of the Killing vector field  can be now written as follows:
\begin{equation}\label{equation 1.7}
	\frac{\partial W_i}{\partial x^j}+\frac{\partial W_j}{\partial x^i}	+\frac{2}{1+\textbf{x}\cdot \textbf{x}}  (x_i W_j   +x_jW_i )=0. 
\end{equation}
Putting $P_i:=K(1+\textbf{x}\cdot \textbf{x})W_i$, we have
\[	\frac{\partial P_i}{\partial x^j}
=2Kx_jW_i+K(1+\textbf{x}\cdot \textbf{x})\frac{\partial W_i}{\partial x^j}.  \]
Then, the equation (\ref{equation 1.7}) is rewritten as
\[	\frac{\partial P_i}{\partial x^j}+\frac{\partial P_j}{\partial x^i}=0,  \]
and from Lemma \ref{Basic Lemma}, we get
\begin{eqnarray*}
	P_i=Q_{ij}x^j+C_i,
\end{eqnarray*}
where $(C_i)$ is an arbitrary constant row vector and $Q=(Q_{ij})$ is an arbitrary constant skew-symmetric matrix.
Therefore, we obtain
\begin{eqnarray*}
	W_i=\frac{Q_{ij}x^j+C_i}{K(1+\textbf{x}\cdot \textbf{x})}.
\end{eqnarray*}
From (\ref{equation 1.6}), we get
\begin{equation*}
W^i=(\delta ^{ij}+x^ix^j)(Q_{jr}x^r+C_j)
	  	  ={Q^i}_rx^r+C^i+(\textbf{x} \cdot \textbf{C})x^i,
\end{equation*}
where ${Q^i}_j:=\delta^{is}Q_{sj}$ and $C^i:=\delta^{is}C_s$.

Using the condition $|W|^2:=h_{ij}W^iW^j=1$ we compute
\begin{equation*}
\begin{split}
|W|^2:&=\frac{1}{K}\bigg(\frac{\delta_{ij}}{(1+\textbf{x}\cdot \textbf{x})}-\frac{x_ix_j}{(1+\textbf{x}\cdot \textbf{x})^2}\bigg)\\
	   &    \qquad \qquad \qquad \bigg({Q^i}_rx^r+C^i+(\textbf{x} \cdot \textbf{C})x^i\bigg)
             \bigg({Q^j}_sx^s+C^j+(\textbf{x} \cdot \textbf{C})x^j\bigg)\\
     &=\frac{1}{K(1+\textbf{x}\cdot \textbf{x})}\bigg(Q_{jr}x^r+C_j\bigg) \bigg({Q^j}_sx^s+C^j+(\textbf{x} \cdot \textbf{C})x^j\bigg)\\
            &=\frac{1}{K(1+\textbf{x}\cdot \textbf{x})}
         \bigg(Q_{jr}x^r{Q^j}_sx^s+2Q_{jr}x^rC^j
                           + (\textbf{C} \cdot \textbf{C}) +(\textbf{x} \cdot \textbf{C})^2 \bigg),
\end{split}
\end{equation*}
and thus we have
\begin{equation}\label{equation 1.8}
	Q_{jr}x^r{Q^j}_sx^s+2Q_{jr}x^rC^j
                           + (\textbf{C} \cdot \textbf{C}) +(\textbf{x} \cdot \textbf{C})^2
    =K(1+\textbf{x}\cdot \textbf{x}).
\end{equation}

Hence, it follows that the equation (\ref{equation 1.8}) holds for any $\textbf{x}\in \E^{2m-1}$ if and only if the equations
\begin{equation}\label{equation 1.9}
	Q_{jr}{Q^j}_s+C_rC_s=K\delta_{rs}, \hspace{0.2in}Q_{jr}C^j=0,  \hspace{0.2in}\textbf{C} \cdot \textbf{C}=K.
\end{equation}
hold. Therefore we get

\begin{proposition}
Suppose that  a metric on a $(2m-1)$-sphere $\Sph^{2m-1}$ of constant sectional curvature $K>0$ is given by multiplying a standard Riemannian metric by $1/K$. 
Denote the eastern hemisphere and the western hemispheres of $\Sph^{2m-1}$ by  $\Sph^{2m-1}_+$ and $\Sph^{2m-1}_-$, and the tangent spaces at the eastern pole and the western pole by $T\Sph^{2m-1}_+$ and $T\Sph^{2m-1}_-$, respectively. 

Then, using the projective coordinate system  $(x^i)$ on $\Sph^{2m-1}_+$ (or $\Sph^{2m-1}_-$) which is mapped from $T\Sph^{2m-1}_+$ (or $T\Sph^{2m-1}_-$),  a Killing vector filed $W$ of unit  length on $\Sph^{2m-1}_+$ (or $\Sph^{2m-1}_-$) can be written as
\[W^i={Q^i}_rx^r+C^i+(\textbf{x} \cdot \textbf{C})x^i,\]
where ${Q^i}_r$ and $C^i$ satisfy the conditions (\ref{equation 1.9}).
\end{proposition}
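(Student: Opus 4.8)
The plan is to solve the unit-length Killing equation directly in the projective coordinates $(x^i)$, exploiting the explicit metric \eqref{equation 1.5}--\eqref{equation 1.6}. First I would write the Killing condition $W_{i\vert\vert j}+W_{j\vert\vert i}=0$ in these coordinates. Using the Christoffel symbols computed above, which collapse to the simple form $^h{{\gamma_j}^i}_k=-\frac{1}{1+\textbf{x}\cdot\textbf{x}}(x_j{\delta^i}_k+x_k{\delta^i}_j)$, the condition becomes exactly \eqref{equation 1.7}: a flat symmetrized-derivative term plus a degree-zero-homogeneous correction proportional to $W$.

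The key device is the substitution $P_i:=K(1+\textbf{x}\cdot\textbf{x})W_i$. Differentiating gives $\partial_j P_i=2Kx_jW_i+K(1+\textbf{x}\cdot\textbf{x})\partial_j W_i$, so the symmetrized quantity $\partial_j P_i+\partial_i P_j$ equals precisely $K(1+\textbf{x}\cdot\textbf{x})$ times the left-hand side of \eqref{equation 1.7}. Thus the conformally weighted Killing equation is linearized into the flat system $\partial_j P_i+\partial_i P_j=0$, to which Lemma \ref{Basic Lemma} applies verbatim, yielding $P_i=Q_{ij}x^j+C_i$ with $Q$ constant skew-symmetric and $(C_i)$ constant. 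Dividing back recovers $W_i$, and raising the index by \eqref{equation 1.6} produces the asserted form; here the skew-symmetry of $Q$ is exactly what annihilates the would-be cubic term $x^iQ_{jr}x^jx^r$, leaving $W^i={Q^i}_rx^r+C^i+(\textbf{x}\cdot\textbf{C})x^i$.

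It remains to encode unit length. Substituting this $W^i$ into $|W|^2=h_{ij}W^iW^j$ with $h_{ij}$ from \eqref{equation 1.5}, the factor $1+\textbf{x}\cdot\textbf{x}$ cancels and the requirement $|W|^2=1$ reduces to the polynomial identity \eqref{equation 1.8}. Since this must hold for every $\textbf{x}\in\E^{2m-1}$, I would match coefficients by homogeneous degree: the constant terms give $\textbf{C}\cdot\textbf{C}=K$; the linear terms force $Q_{jr}C^j=0$; and the quadratic terms, after writing $(\textbf{x}\cdot\textbf{C})^2=C_rC_sx^rx^s$ and noting that $Q_{jr}{Q^j}_s$ is already symmetric in $r,s$, give $Q_{jr}{Q^j}_s+C_rC_s=K\delta_{rs}$. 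These are exactly the conditions \eqref{equation 1.9}, and since every step is reversible the converse representation follows as well.

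The main obstacle is recognizing the linearizing substitution $P_i=K(1+\textbf{x}\cdot\textbf{x})W_i$: once one sees that the conformal weight of the metric is precisely what converts the curved Killing equation into a flat one covered by Lemma \ref{Basic Lemma}, the rest is organized bookkeeping. The only other points demanding care are verifying that the cubic term in the index-raising step vanishes by the skew-symmetry of $Q$, and that the degree-two matching in \eqref{equation 1.8} is carried out on the symmetric part of the coefficients, so that no information is lost in passing to \eqref{equation 1.9}.
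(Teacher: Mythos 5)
Your proposal is correct and follows essentially the same route as the paper's own argument: the conformal linearization $P_i=K(1+\textbf{x}\cdot\textbf{x})W_i$ reducing the curved Killing equation \eqref{equation 1.7} to the flat system of Lemma \ref{Basic Lemma}, the index raising via \eqref{equation 1.6} with $\textbf{x}\cdot Q\textbf{x}=0$ killing the cubic term, and degree-by-degree matching in \eqref{equation 1.8} to obtain \eqref{equation 1.9}. Your explicit remark that the quadratic matching is legitimate because $Q_{jr}{Q^j}_s$ is already symmetric in $r,s$ is a point the paper leaves implicit, but otherwise the two proofs coincide.
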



\subsection{Classification of Kropina spaces with constant curvature.}\label{Classification of CC-K}
\subsubsection{The classification theorem.}
\begin{theorem}\label{Theorem 1}
Let $(M,F=\alpha^2/\beta$)
 be a Kropina space on a smooth manifold $M$ of dimension $n\ge 2$, where $\alpha=\sqrt{a_{ij}(x)y^iy^j}$ and $\beta=b_i(x)y^i$.
Then $(M, F)$ is of constant flag curvature $K$ if and only if the following conditions are satisfied.
\begin{enumerate}
\item The Riemannian manifold $(M,h)$ is a Riemannian space of constant sectional curvature and the vector field $W=W^{i}(x)\dfrac{\partial}{\partial x^{i}}$ is a unit length Killing vector field with respect to $h$, where the Riemannian metric $h=(h_{ij})$ and the functions $W^{i}(x)$ are given in \eqref{2.5}, provided \eqref{2.5'}.
\item 
Up to local isometry, the constant sectional curvature Riemannian metric $h$ and the vector field $W$ must belong to one of the following two families.

$(+)$ When $K>0$ : $h$ is $\frac{1}{K}$ times the standard metric on the unit $(2m-1)$-sphere $\Sph^{2m-1}$ in projective coordinates, and $W=W^i(\partial/\partial x^i)$ is a unit length Killing vector field, where
\[	W^i={Q^i}_rx^r+C^i+(\textbf{x} \cdot \textbf{C})x^i,  \]
with
\[	Q_{jr}{Q^j}_s+C_rC_s=K\delta_{rs}, \hspace{0.2in}Q_{jr}C^j=0,  \hspace{0.2in}\textbf{C} \cdot \textbf{C}=K.  \]
In these coordinates, the quadratic form of $h$, evaluated on $y\in T_x\Sph^{2m-1}$, satisfies
\[	h(\textbf{y}, \textbf{y})=\frac{1}{K}\bigg\{\frac{(\textbf{y}\cdot \textbf{y})(1+\textbf{x}\cdot \textbf{x}) 
	                -(\textbf{x}\cdot \textbf{y})^2  }{(1+\textbf{x}\cdot \textbf{x})^2}\bigg\}. \]

$(0)$  When $K=0$ : $h$ is the Euclidean metric $\delta_{ij}$ on $\E^n$ and $W=W^i(\partial/\partial x^i)$ 
is a unit length Killing vector field, where
\begin{equation*}
	W^i=C^i,
\end{equation*}
that is, $\sum_{i=1}(C^i)^2=1$.
\end{enumerate}
\end{theorem}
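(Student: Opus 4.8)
The plan is to obtain Theorem \ref{Theorem 1} by assembling the results already established, organized as a clean case analysis governed by the sign of $K$. The starting point is Theorem \ref{Theorem 3.2}: a Kropina space $(M,F)$ is of constant flag curvature $K$ if and only if the navigation data $(h,W)$ of \eqref{2.5}, \eqref{2.5'} consist of a Riemannian metric $h$ of constant sectional curvature $K$ together with a unit length Killing vector field $W$ on $(M,h)$. This equivalence is precisely condition (1), so the entire content of the ``only if'' direction reduces to identifying, up to local isometry, which constant curvature spaces carry a unit Killing field and what form such a field must take.

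For the first reduction I would invoke the trichotomy for constant curvature recalled in Section \ref{section 5}: locally $(M,h)$ is isometric to $\mathbb H^n$, $\E^n$ or $\Sph^n$ according to the sign of $K$, and by the Remark of Section \ref{section 5} this local isometry lifts to a conic isometry of the associated Kropina metrics, so there is no loss in working on the standard space forms. Theorem \ref{Theorem 4.18} then eliminates two of the three models: neither a space of negative curvature nor an even dimensional sphere $\Sph^{2m}$ admits a unit Killing field. Consequently $K<0$ is impossible and $K>0$ forces the odd dimensional sphere $\Sph^{2m-1}$; this is exactly the conclusion of Theorem \ref{Theorem 4.20}. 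Only the Euclidean case $K=0$ and the odd sphere case $K>0$ survive.

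It remains to pin down $W$ in each surviving case. For $K=0$, Proposition \ref{Proposition 6.3} shows that the Killing equation together with the unit length constraint on $(\E^n,\delta_{ij})$ forces $W^i=C^i$ with $\sum_i(C^i)^2=1$, which is family $(0)$. For $K>0$ I would use the projective coordinate system on $\Sph^{2m-1}$ and the metric \eqref{equation 1.5}; substituting the explicit Christoffel symbols into the Killing equation and applying Lemma \ref{Basic Lemma} to the auxiliary field $P_i:=K(1+\textbf{x}\cdot\textbf{x})W_i$ yields the stated expression $W^i={Q^i}_rx^r+C^i+(\textbf{x}\cdot\textbf{C})x^i$, while the unit length condition $h_{ij}W^iW^j=1$ expands into the algebraic constraints \eqref{equation 1.9}, as computed in Section \ref{sec.6.2.2}. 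This is family $(+)$. The converse is immediate: each listed pair $(h,W)$ is by construction a unit length Killing field on a space of constant curvature $K$, so Theorem \ref{Theorem 3.2} returns constant flag curvature $K$.

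The main obstacle is not any single computation, since these are all carried out in the preceding propositions, but rather ensuring the case distinction is exhaustive and correctly rules out negative curvature and the even dimensional spheres. The delicate point is the use of Theorem \ref{Theorem 4.18} and Theorem \ref{Theorem 4.20} to guarantee that the hyperbolic models and the spheres $\Sph^{2m}$ genuinely cannot carry a unit length Killing field, so that the classification collapses to exactly the two families $(+)$ and $(0)$; everything else is a matter of transcribing Theorem \ref{Theorem 3.2}, Proposition \ref{Proposition 6.3}, and the spherical computation of Section \ref{sec.6.2.2} into the statement.
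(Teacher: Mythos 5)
Your proposal is correct and follows essentially the same route as the paper: the paper proves Theorem \ref{Theorem 1} by exactly this assembly, namely Theorem \ref{Theorem 3.2} for condition (1), the space-form trichotomy together with the conic-isometry lifting remark, Theorem \ref{Theorem 4.18} (hence Theorem \ref{Theorem 4.20}) to eliminate $\mathbb{H}^n$ and $\Sph^{2m}$, Proposition \ref{Proposition 6.3} for the flat family $(0)$, and the projective-coordinate Killing computation of Subsection \ref{sec.6.2.2} (via $P_i=K(1+\textbf{x}\cdot\textbf{x})W_i$ and Lemma \ref{Basic Lemma}) for the family $(+)$, with the converse returned by Theorem \ref{Theorem 3.2}. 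Nothing essential is missing.
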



\subsubsection{Globally defined Killing vector fields on the standard sphere $\Sph^{2m-1}$.} 
Let $W$ be a Killing vector field on the tangent space $T\Sph^{2m-1}_+ \simeq \E^{2m-1}$ at the eastern pole. Projecting $W$ to the eastern hemisphere by $\varphi_{+*}$ and using the equation (\ref{equation 1.3}), we get
\begin{eqnarray*}
     \varphi_{+ *}(W)
       &=&\begin{pmatrix} -\frac{  \textbf{x}\cdot W}{(\sqrt{1+\textbf{x}\cdot \textbf{x}})^3 }  \cr
                           -\frac{\textbf{x}\cdot W}{(\sqrt{1+\textbf{x}\cdot \textbf{x}})^3 }\textbf{x}
	                                     +\frac{1}{\sqrt{1+\textbf{x}\cdot \textbf{x}}}W        	 \end{pmatrix}  
        =\begin{pmatrix}  -\frac{  \{\textbf{x}\cdot Q \textbf{x}+  \textbf{x}  \cdot C +(\textbf{x}  \cdot C )(\textbf{x} \cdot \textbf{x})\}}{(\sqrt{1+\textbf{x}\cdot \textbf{x}})^3 } \cr
                    -\frac{ \textbf{x}\cdot Q \textbf{x}  }{(\sqrt{1+\textbf{x}\cdot \textbf{x}})^3 }\textbf{x}
	                                     +\frac{Q  {\bf x}+C     }{\sqrt{1+\textbf{x}\cdot \textbf{x}}}        \end{pmatrix}  
          =\begin{pmatrix}  -\frac{   \textbf{x}  \cdot C }{\sqrt{1+\textbf{x}\cdot \textbf{x}} }   \cr
                  	                                     \frac{Q  {\bf x}+C     }{\sqrt{1+\textbf{x}\cdot \textbf{x}}}  \end{pmatrix}  \nonumber\\
        &=&\begin{pmatrix} -\frac{   (\sum_{i=1}^n x^i C^i) }{\sqrt{1+\textbf{x}\cdot \textbf{x}} }  \cr
                  	                                     \frac{({Q^i}_jx^j+C^i)     }{\sqrt{1+\textbf{x}\cdot \textbf{x}}}    \end{pmatrix}  
       = \begin{pmatrix}   0 &  - (C^i)^t  \cr
                                            (C^i) & ({Q^i}_j)\end{pmatrix}.
                      \begin{pmatrix}       \frac{ 1}{\sqrt{1+\textbf{x}\cdot \textbf{x}}}\cr
                                     \frac{(x^j)}{\sqrt{1+\textbf{x}\cdot \textbf{x}}}  \end{pmatrix},
                          \nonumber\\
\end{eqnarray*}
where ${\bf x}$ is a column vector.
In the above equation, we have used the relation ${\bf x}\cdot Q{\bf x}=0$. So, we get
\begin{equation}\label{equation 1.12}
          [\varphi_{+ *}(W)]^t=p^t\Omega,
\end{equation}
where

\begin{equation}\label{equation 1.13}
     p=\begin{pmatrix} \frac{ 1}{\sqrt{1+\textbf{x}\cdot \textbf{x}}}\cr  \frac{1}{\sqrt{1+\textbf{x}\cdot \textbf{x}}}\textbf{x}   \end{pmatrix}, \hspace{0.2in}
     \Omega=\begin{pmatrix}   0 &   C^t  \cr
                                           - C & -Q\end{pmatrix}.
\end{equation}

We must notice that $p$ in (\ref{equation 1.13}) is a point in the eastern hemisphere. Since we have 
\[  \lim_{|{\bf x}| \longrightarrow \infty} \frac{ 1}{\sqrt{1+\textbf{x}\cdot \textbf{x}}} =0, \hspace{0.1in}
        \lim_{|{\bf x}| \longrightarrow \infty} \bigg|\frac{1}{\sqrt{1+\textbf{x}\cdot \textbf{x}}}\textbf{x} \bigg|=1,\]
for the sake of the continuity of $W$ on the whole of $\Sph^{2m-1}$ the value of the projection of $W$ at any point $p$ on the equator is defined by 
$p^t\Omega$.
Furthermore, we extend $W$ to the open western hemisphere by imposing
  \[[\varphi_{- *}(W)]^t=p^t\Omega, \hspace{0.1in}  p=\begin{pmatrix}        -\frac{ 1}{\sqrt{1+\textbf{x}\cdot \textbf{x}}} \cr \frac{1}{\sqrt{1+\textbf{x}\cdot \textbf{x}}}\textbf{x} \end{pmatrix}\]
It follows
\[W=Q{\bf x}-C-({\bf x}\cdot C){\bf x}.\]

Therefore, there exists a skew-symmetric matrix $\Omega$ in  (\ref{equation 1.12})  and   (\ref{equation 1.13}) for a Killing vector field on $\Sph^{2m-1}$. 


\subsubsection{Globally defined Killing vector fields on $\E^{n}$.} 

Since the Euclidean space $\E^{n}$ is covered by a single coordinate chart, the vector $W^{i}=C^{i}$ in 
Proposition \ref{Proposition 6.3} is a globally defined unit length Killing vector field. 

\begin{remark}
For $n=2$ the flat Riemannian metric on cylinder, torus, M\"obius band and Klein bottle together with the corresponding unit Killing vector fields 
provide examples of CC-Kropina structures on these surfaces, respectively (see Examples \ref{cylinder}, \ref{torus} for concrete constructions on cylinder and torus).
\end{remark}


\section{The moduli space $\mathcal{M}_\mathcal{K}$.}\label{section 6}
\subsection{The isometry between two conic Kropina metrics.}
\begin{definition}
Let  $(M_1, F_1)$ and $(M_2, F_2)$  be two conic Finsler spaces, where $F_i : A_i \longrightarrow (0, \infty)$ $(i=1,2)$ are conic Finsler metrics.
We say that 
$(M_1, F_1)$ and $(M_2, F_2)$   are  {\it conic isometric} if there exists a diffeomorphism $\phi : M_1 \longrightarrow M_2$ which, when lifted to a map $\widetilde \phi :A_1\subset TM_{1}\to A_2\subset TM_{2}$, satisfies $\widetilde\phi^*F_2=F_1$ (in fact $\widetilde\phi$ is just the differential map of $\phi$).
\end{definition}

\begin{remark}
We point out that in the definition above we assume $\widetilde \phi(A_{1})=\phi_{*} (A_1)= A_2$. This is always implicitly assumed when we discuss conic isometries.
\end{remark}

Let $M_1$, $M_2$ be $n(\ge 2)$-dimensional differential manifolds. Consider two Kropina metrics $F_i$ on $M_i$, where $F_i=(\alpha_i)^2/\beta_i$, $\alpha_{i}=\sqrt{(a_{i})_{jk}y^{j}y^{k}}$ and 
$\beta_{i}=(b_{i})_{j}y^{j}$, $i=1,2$. They are constructed by the pairs $(h_i, W_i)$ of a Riemannian metric $h_i$ and a unit vector field $W_i$ on $M_i$, for $i=1,2$, respectively. We get

\begin{lemma}\label{Lemma 2.1}
Let  $(M_1, F_1)$ and $(M_2, F_2)$  be two conic Finsler spaces, where $F_i : A_i \longrightarrow (0, \infty)$ $(i=1,2)$ are conic Finsler metrics.
Let $\phi : M_1 \longrightarrow M_2$ be a diffeomorphism. The following three statements are equivalent:\\
$(i)$        $\phi$ lift to a conic  isometry between $F_1$ and $F_2$.\\
$(ii)$        $\phi^*\alpha_2=e^{\frac{\tau(x)}{2}}\alpha_1$ and $\phi^*\beta_2=e^{\tau(x)}\beta_1$, where $\tau(x)=\log\frac{\phi^{*}(b_{2})^{2}}{(b_{1})^{2}}$ is a function  of position alone.\\
$(iii)$        $\phi^*h_2=h_1$ and $\phi_*W_1=W_2$.
\end{lemma}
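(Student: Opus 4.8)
The plan is to treat statement (ii) as the hinge and prove the two links $(ii)\Leftrightarrow(i)$ and $(ii)\Leftrightarrow(iii)$ separately, since (ii) records the transformation laws of $\alpha$ and $\beta$ individually, whereas (i) sees only their combination $F=\alpha^2/\beta$ and (iii) sees only the navigation data $(h,W)$. The implication $(ii)\Rightarrow(i)$ is immediate: if $\phi^*\alpha_2=e^{\tau/2}\alpha_1$ and $\phi^*\beta_2=e^{\tau}\beta_1$ then $\wt\phi^*F_2=(\phi^*\alpha_2)^2/(\phi^*\beta_2)=e^{\tau}\alpha_1^2/(e^{\tau}\beta_1)=F_1$, the conformal factor cancelling, while $e^{\tau}>0$ guarantees $\phi_*\{\beta_1>0\}=\{\beta_2>0\}$, i.e. $\wt\phi(A_1)=A_2$.

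The substantial step is $(i)\Rightarrow(ii)$, where $\alpha$ and $\beta$ must be disentangled from the single function $F$. The key point I would use is that the conic domain $A_x=\{\beta>0\}$ is intrinsically attached to the conic metric $F$ and has the hyperplane $\{\beta=0\}$ as its boundary. A conic isometry satisfies $\wt\phi(A_1)=A_2$ by hypothesis, so $\phi_*$ maps $\{\beta_1>0\}$ onto $\{\beta_2>0\}$; hence $\phi^*\beta_2$ and $\beta_1$, both linear in $y$, share the same zero hyperplane and the same positive side, forcing $\phi^*\beta_2=\lambda\,\beta_1$ for a positive smooth $\lambda=\lambda(x)$ (smooth and nonvanishing because $b_i\ne 0$). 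Substituting into $F_1=\wt\phi^*F_2$ gives $(\phi^*\alpha_2)^2=\lambda\,\alpha_1^2$, hence $\phi^*a_2=\lambda\,a_1$ as quadratic forms, so $\phi^*(a_2)^{jk}=\lambda^{-1}(a_1)^{jk}$. Combining these, $\phi^*(b_2)^2=\phi^*(a_2)^{jk}\,\phi^*(b_2)_j\,\phi^*(b_2)_k=\lambda^{-1}\lambda^2(b_1)^2=\lambda\,(b_1)^2$, so that $\lambda=\phi^*(b_2)^2/(b_1)^2=e^{\tau}$ by the very definition of $\tau$; this yields exactly (ii).

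For $(ii)\Leftrightarrow(iii)$ I would compute directly from the defining relations \eqref{a_ij, b_i def} and their inverses \eqref{2.5}, together with the identity $(b_i)^2=4e^{-k_i}$ (which holds because $|W_i|_{h_i}=1$) and which identifies $\tau=k_1-k_2\circ\phi$. Assuming (iii), one gets $\phi^*a_2=e^{-k_2\circ\phi}\phi^*h_2=e^{-k_2\circ\phi}h_1=e^{\tau}a_1$, the $\alpha$-relation; and pulling back $\beta_2=2e^{-k_2}(W_2)_jy^j$, using $\phi^*h_2=h_1$ to contract the two Jacobian factors against $(h_2)_{jl}$ and $\phi_*W_1=W_2$ to rewrite $W_2\circ\phi$, one lands on $\phi^*\beta_2=2e^{-k_2\circ\phi}(W_1)_ay^a=e^{\tau}\beta_1$. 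Conversely, assuming (ii), the same manipulations run backwards: $\phi^*h_2=e^{k_2\circ\phi}\phi^*a_2=e^{k_2\circ\phi}e^{\tau}a_1=e^{k_1}a_1=h_1$, and since $\phi$ is now an $h$-isometry it intertwines the musical isomorphisms, so the $1$-form identity $\phi^*(W_2^{\flat})=W_1^{\flat}$, read off from $\phi^*\beta_2=e^{\tau}\beta_1$ and $(W_i)_j=\tfrac12 e^{k_i}(b_i)_j$, upgrades to $\phi_*W_1=W_2$.

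The hard part is the recovery of the factorization in $(i)\Rightarrow(ii)$: the datum $F$ alone determines $(\alpha,\beta)$ only up to the rescaling $(\alpha,\beta)\mapsto(\sqrt{\mu}\,\alpha,\mu\,\beta)$, so the whole content lies in pinning the factor $\mu$ down to the correct value $\lambda=e^{\tau}$. This rests on two observations that must be handled with care: that the zero-hyperplane of $\beta$ is canonically the boundary of the conic domain, and that the normalization $(b_i)^2=4e^{-k_i}$ forced by $|W_i|=1$ is precisely what makes $\lambda=e^{\tau}$. Once these are in place, the remaining identities in $(ii)\Leftrightarrow(iii)$ reduce to the routine cancellation of the conformal factors $e^{\pm k_i}$ against $e^{\tau}$.
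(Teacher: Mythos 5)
Your proof is correct, and in its bookkeeping steps it coincides with the paper's: the cancellation in (ii)$\Rightarrow$(i), the identification $\tau=k_1-\phi^*k_2$ forced by the U-Kropina normalization $(b_i)^2=4e^{-k_i}$, and the conformal-factor cancellations establishing (ii)$\Leftrightarrow$(iii) are all exactly the paper's computations. The genuinely different move is in the key step (i)$\Rightarrow$(ii), where $(\alpha,\beta)$ must be disentangled from $F$ alone. The paper argues algebraically: it rewrites the isometry condition as $(\widetilde\phi^{*}\alpha_{2})^{2}\cdot\beta_{1}=(\alpha_{1})^{2}\cdot\widetilde\phi^{*}\beta_{2}$, views both sides as homogeneous polynomials in $y$, and uses that the positive-definite quadratic form $(\alpha_1)^2$ is not divisible by the linear form $\beta_1$ (divisibility would drop the rank of $(a_1)_{ij}$), so $\beta_1$ must divide $\widetilde\phi^{*}\beta_{2}$, yielding $\widetilde\phi^{*}\beta_{2}=f\,\beta_1$ with $f>0$ read off afterwards from $(\widetilde\phi^{*}\alpha_{2})^{2}=f\,(\alpha_1)^2$. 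You instead argue geometrically: since a conic isometry satisfies $\widetilde\phi(A_1)=A_2$ (a condition the paper itself flags in a remark as implicitly assumed in the definition), the linear forms $\phi^*\beta_2$ and $\beta_1$ have the same positive open half-space in each tangent space, hence are positively proportional. After that, both routes pin down the factor identically, via $\phi^{*}(b_2)^2=\lambda\,(b_1)^2$ and the definition of $\tau$. Your version is more elementary, dimension-free (no irreducibility of the quadratic form is invoked), and delivers positivity of $\lambda$ for free, at the cost of consuming the standing hypothesis $\widetilde\phi(A_1)=A_2$; the paper's divisibility argument needs only that the pullback makes sense and recovers the equality of conic domains as a by-product, as your own observation in (ii)$\Rightarrow$(i) shows. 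A small completeness bonus on your side: you prove (iii)$\Rightarrow$(ii) explicitly, using that an $h$-isometry intertwines the musical isomorphisms, whereas the paper's written proof covers only (i)$\Rightarrow$(ii), (ii)$\Rightarrow$(i) and (ii)$\Rightarrow$(iii), leaving the implication that closes the equivalence implicit.
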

								
\begin{proof}
(i)$\Rightarrow$(ii)

Assume (i) and remark that the isometry condition $\widetilde\phi^*F_2=F_1$ reads
\begin{equation}\label{7.1}
(\widetilde\phi^{*}\alpha_{2})^{2}\cdot\beta_{1}=(\alpha_{1})^{2}\cdot\widetilde\phi^{*}\beta_{2}.
\end{equation}

We point out that we regard the two Riemannian metrics $\alpha_{1}$, $\alpha_{2}$ as well as the linear 1-forms $\beta_{1}$, $\beta_{2}$ as mappings $TM_{i}\to \R$, $i=1,2$, respectively.

Moreover, we see that $(\alpha_{1})^{2}$, $(\widetilde\phi^{*}\alpha_{2})^{2}$ and $\beta_{1}$, $\widetilde\phi^{*}\beta_{2}$ are homogeneous polynomials of degree 2 and 1 in $y$, respectively. Since 
$(\alpha_{1})^{2}$ is not divisible by $\beta_{1}$ (otherwise the rank of the matrix $(a_{1})_{ij}$ would decrease and this is not allowed by definition for a Riemannian metric), it follows that $\widetilde\phi^{*}\beta_{2}$ must be divisible by $\beta_{1}$, i.e. we must have $\widetilde\phi^{*}\beta_{2}=f\cdot\beta_{1}$, for a function $f:M_{1}\to \R$. Substituting this in \eqref{7.1} it results 
$(\widetilde\phi^{*}\alpha_{2})^{2}=f\cdot(\alpha_{1})^{2}$ and therefore we must have $f>0$. By denoting $\tau(x):=\log f(x)$ the first two relations in (ii) are obtained. 

From here we have
\begin{equation}\label{b2}
\widetilde\phi^{*}(b_{2})^{2}=e^{\tau(x)}\cdot(b_{1})^{2}
\end{equation}
taking into account that 
\begin{equation*}
\widetilde\phi^{*}(b_{2})_{i}=e^{\tau(x)}\cdot(b_{1})_{i},\qquad 
\widetilde\phi^{*}(a_{2})_{ij}=e^{\tau(x)}\cdot(a_{1})_{ij},
\end{equation*}

Therefore the expression for $\tau(x)$ follows immediately.

The converse (ii)$\Rightarrow$(i) follows immediately.

\bigskip

(ii)$\Rightarrow$(iii)

Recall that the geometric data $(\alpha_{i},\beta_{i})$ are related to Zermelo's navigation data $(h_{i},W_{i})$ by relations \eqref{2.5} making use of the functions $k_{i}(x)$, for $i=1,2$.

If we assume (ii), then from \eqref{b2} it follows
\begin{equation*}
\tau=k_{1}-\phi^{*}k_{2}.
\end{equation*}

Relation \eqref{2.5} for $h_{ij}$ implies 
\begin{equation*}
\begin{split}
\phi^{*}(h_{2})_{ij}&=\phi^{*}(e^{k_{2}}\cdot (a_{2})_{ij})=\phi^{*}(e^{k_{2}})\cdot \phi^{*}((a_{2})_{ij})
=e^{\phi^{*}k_{2}}e^{\tau}\cdot(a_{1})_{ij}\\
&=e^{\tau-k_{	1}+\phi^{*}k_{2}}\cdot (h_{1})_{ij}=(h_{1})_{ij}
\end{split}
\end{equation*}
and similarly for $W$
\begin{equation*}
\phi^{*}(W_{2})_{i}=\frac{1}{2}e^{\phi^{*}k_{2}}\phi(b_{2})_{i}=(W_{1})_{i}.
\end{equation*}

The conclusion follows immediately taking into account that, for $W_{i}\in TM_{i}$ as vector fields, $\phi_{*}W_{1}=W_{2}$ is equivalent to $W_{1}=\phi^{*}(W_{2})$, where 
$W_{1}\equiv(W_{1})_{i}(x)y^{i}$, and similarly for $W_{2}$, are regarded as mappings on $TM_{i}$, $i=1,2$, respectively. 

$\qedd$
\end{proof}

\begin{remark}
It is interesting to remark that a Kropina isometry $\phi:(M_{1},F_{1})\to (M_{2},F_{2})$ do not induce a Riemannian isometry between Riemannian spaces $(M_{1},\alpha_{1})$ and $(M_{2},\alpha_{2})$, but it induces one between spaces $(M_{1},h_{1})$ and $(M_{2},h_{2})$.
\end{remark}


\subsection{A Lie group formalism.}
Theorem \ref{Theorem 1} characterizes 
the CC-Kropina spaces of constant flag curvature $K$ in terms of pairs $(M,h)$, where $h$ is a Riemannian metric of constant sectional curvature and $W$ a unit length Killing vector field on $(M,h)$. 
 
 Indeed, for any CC-Kropina space $(M,F)$ with the corresponding Zermelo's navigation data $(h, W)$, there exists a Riemannian local isometry $\phi$ between $(M, h)$ and  one of the two standard Riemannian space forms: the positive constant sectional curvature sphere $(\Sph^{2m-1}, h_+)$, and the flat Euclidean space $(\E^n, h_0)$.

From Lemma \ref{Lemma 2.1} it follows that $\phi$ lifts to a local Finslerian conic isometry between $(M, F)$ and the CC-Kropina spaces obtained from the Zermelo's navigation data sets $(h_{+},\phi_{*}W)$ and $(h_{0},\phi_{*}W)$ on $\Sph^{2m-1}$ and $\E^n$, respectively, where $W$ is the corresponding unit length Killing vector field listed up in Theorem \ref{Theorem 1}, and $\phi$ is an isometry of $\E^n$ and $\Sph^{2m-1}$, respectively.

This correspondence includes a certain degree of redundancy due to the presence of isometric Kropina structures. If we denote by $Isom_{h}(M)$ the isometry group of the Riemannian structure $(M,h)$, where $h$ is one of the canonical metrics $h_{+}$ or $h_{0}$, then we would like to evaluate the redundancy of the CC-Kropina metrics given by the pairs $(h,\phi_{*}W)$, where $\phi\in Isom_{h}(M)$ (see \cite{BRS} for a similar study in the case of Randers spaces). In other words we identify all isometric CC-Kropina metrics with a point and compute the dimension of the moduli space of CC-Kropina metrics obtained in this way.  

The Lie group theory apparatus used in order to do this is similar to the setting in  \cite{BRS}. Indeed, 
begin with a standard Riemannian space form $(M,h)$, 
where $M=\E^n\slash \Sph^{2m-1}$, 
$h=h_+\slash h_0$, respectively, and identify the isometry group $G=Isom_{h}(M)$ with a matrix subgroup of $GL_{n+1}\R$.
The Killing vector field $W$ of $h$ gives a representation of a matrix Lie subalgebla $\mathfrak{h}$ of $\mathfrak{gl}_{n+1}\R$, the Lie algebra of  $GL_{n+1}\R$.
The push-forword action $W \longmapsto \phi_*W:=\phi_*\circ W \circ \phi^{-1}$ on the manifold corresponds to the "adjoint action"
\[ \Omega \longmapsto Ad_g\Omega:=g\Omega g^{-1}\]
of $G$ on $\mathfrak{h}$, where
$g\in GL_{n+1}R$ is the matrix which  corresponds to the isometry map $\phi$, and $\Omega \in \mathfrak{h}$ is the matrix analog of the Killing vector field $W$. Obviously,
$Ad : \mathfrak{h} \longrightarrow \mathfrak{h}$ is well defined because the equation $\mathcal{L}_Wh=0$ becomes $\mathcal{L}_{\phi_*W}h=0$ under the action of the isometry map $\phi$.
Thus, $\phi_*W$ is an Killing vector field whenever $W$ is one.

The adjoint action $Ad$ described above partitions $\mathfrak{h}$ into orbits. Each orbit corresponds to a distinct conic isometry class of Kropina metrics with constant flag curvature $K$. For each orbit, matrix theory singles out a privileged representative $\tilde{\Omega}$, i.e.  a normal form.

 One can see that for $K>0$, the metric $h=h_+$ is $\frac{1}{K}$ times the standard metric on the unit $\Sph^{2m-1}$. The orbits are those which result from the adjoint action of the orthogonal group $O(2m)$ on its Lie algebra $\mathfrak{o}(2m)$.
 
On the other hand, for $K=0$, we have $h=h_0$, the standard flat metric on $\E^n$. The orbits come from the adjoint action of the Euclidean group $E(n)$ on its Lie algebra $\mathfrak{E}(n)$ which is made of  $O(n)$ and the additive group $\R^n$ of translations.


We recall the following proposition: 

\begin{proposition} (\cite{BRS})\label{compact case}
Let $\Omega$ be any real $l\times l$ skew-symmetric matrix. 
Then, there exists an orthogonal matrix $B\in O(l)$ such that $\tilde{\Omega}=B^{-1}\Omega B$, where the matrix $\tilde{\Omega}$ defined as follows:
\begin{description}
\item when $l$ is even,
\[ \tilde{\Omega}:=a_1J\oplus \cdots \oplus a_mJ, \hspace{0.2in}    m=\frac{l}{2},  \] 
\item      when $l$ is odd,
\[   \tilde{\Omega}:=a_1J\oplus \cdots \oplus a_mJ \oplus 0, \hspace{0.2in}    m=\frac{l-1}{2},   \] 
\end{description}
where 
\[a_1\ge a_2\ge \cdots \ge a_m\ge 0, \hspace{0.1in} and \hspace{0.1in} J=\begin{pmatrix}0 & 1\cr -1 &0\end{pmatrix}.\]
\end{proposition}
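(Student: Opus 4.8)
The plan is to prove this by induction on $l$ using the real spectral theorem applied to the symmetric positive semi-definite matrix $S:=-\Omega^2=\Omega^T\Omega$ (positive semi-definite because $v^TSv=\|\Omega v\|^2\ge 0$). The base cases $l=0$ and $l=1$ are immediate: for $l=1$ skew-symmetry forces $\Omega=0$, matching the odd-$l$ normal form with $m=0$. For the inductive step, if $\Omega=0$ we are done with all $a_j=0$; otherwise $S$ has a positive eigenvalue and we peel off a two-dimensional invariant block.

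First I would extract the top invariant $2$-plane. Let $a_1^2>0$ be the largest eigenvalue of $S$ with unit eigenvector $u$, and set $v:=\frac{1}{a_1}\Omega u$. A short computation gives $\|v\|^2=\frac{1}{a_1^2}u^TSu=1$, while $u\perp v$ because $u^T\Omega u=(u^T\Omega u)^T=-u^T\Omega u=0$ by skew-symmetry. Moreover $\Omega u=a_1 v$ and $\Omega v=\frac{1}{a_1}\Omega^2 u=-\frac{1}{a_1}Su=-a_1 u$, so $P_1:=\mathrm{span}\{u,v\}$ is $\Omega$-invariant and, in the orthonormal basis $(u,v)$, the operator $\Omega|_{P_1}$ is represented by $a_1\begin{pmatrix}0&-1\\1&0\end{pmatrix}$. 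Conjugating by the orthogonal matrix $\mathrm{diag}(1,-1)\in O(2)$ turns this into exactly $a_1 J$.

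Next I would pass to the orthogonal complement. For any $w$ with $w\perp u$ and $w\perp v$ one has $\langle\Omega w,u\rangle=-\langle w,\Omega u\rangle=-a_1\langle w,v\rangle=0$ and likewise $\langle\Omega w,v\rangle=0$, so $P_1^\perp$ is $\Omega$-invariant and $\Omega$ restricts there to a skew-symmetric operator on an $(l-2)$-dimensional space. Choosing an orthonormal basis of $P_1^\perp$ and applying the inductive hypothesis produces an orthogonal change of basis putting $\Omega|_{P_1^\perp}$ into the block form $a_2J\oplus\cdots$. Assembling the block adapted to $P_1$ together with those coming from $P_1^\perp$ yields the orthogonal matrix $B$ and the block-diagonal $\tilde\Omega$. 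Since at each stage I strip off the largest remaining eigenvalue of $S$, the ordering $a_1\ge a_2\ge\cdots\ge a_m\ge 0$ holds automatically, and the kernel of $\Omega$ supplies the trailing zero blocks; for odd $l$ this kernel is odd-dimensional because $\det\Omega=\det\Omega^T=(-1)^l\det\Omega$ forces $\det\Omega=0$, leaving precisely one unpaired $0$ after the $J$-blocks.

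The part requiring the most care is verifying that the chosen pair $(u,v)$ is genuinely orthonormal and that $\Omega$ acts on $P_1$ as a pure scaled rotation; this hinges on the identity $\Omega^2=-S$ and on $u$ being an eigenvector of $S$ rather than merely of $\Omega$ (indeed $\Omega$ itself has no real eigenvectors in these planes). Everything else is bookkeeping: tracking the descending order of the $a_j$ and matching the parity of $l$ to the number of $J$-blocks versus the single trailing zero.
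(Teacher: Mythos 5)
Your proof is correct. Note, however, that the paper gives no proof of this proposition at all: it is imported verbatim from \cite{BRS} as a known normal-form fact from matrix theory, so there is no in-paper argument to compare against, and your self-contained induction supplies exactly what is being cited. Your argument is the standard one and all the key steps check out: $S=-\Omega^2=\Omega^{T}\Omega$ is symmetric positive semi-definite; the pair $(u,v)$ with $v=\tfrac{1}{a_1}\Omega u$ is orthonormal with $\Omega u=a_1v$, $\Omega v=-a_1u$, so $\Omega\vert_{P_1}$ is $-a_1J$ in the basis $(u,v)$ and conjugation by $\mathrm{diag}(1,-1)$ (or swapping $u$ and $v$) converts it to $a_1J$; invariance of $P_1^{\perp}$ follows from skew-adjointness as you show; and the descending order of the $a_j$ holds because $P_1^{\perp}$ is also $S$-invariant and the eigenvalues of $S\vert_{P_1^{\perp}}$ are among those of $S$. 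One small overstatement in your closing remark: the identity $\det\Omega=(-1)^l\det\Omega$ shows only that the kernel is nontrivial when $l$ is odd, not that it is odd-dimensional; odd-dimensionality follows instead from the evenness of the rank of a skew-symmetric matrix, or more simply from your own induction, which strips two dimensions at a time and so makes the remaining dimension retain the parity of $l$, bottoming out at the $l=1$ base case. Since your inductive structure handles the parity bookkeeping automatically and never actually uses the claimed odd-dimensionality of the kernel, this does not affect the validity of the proof.
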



\subsection{The $(2m-1)$-sphere.}
It is known that $Isom_{h_{+}}(\Sph^{2m-1})=O(2m)$, i.e. orthogonal matrices which represent rigid rotations by right multiplying the row vectors of $\E^{2m}$.
Each Killing vector field $W$ of $(\Sph^{2m-1}, h_{+})$  corresponds to a constant  skew-symmetric $2m\times 2m$ matrix  
\[\Omega:=\begin{pmatrix}0 & C^t \cr -C & -Q\end{pmatrix}\in \mathfrak{o}(2m).\]
This correspondence between Killing vector fields of $(\Sph^{2m-1}, h)$ and elements of $\mathfrak{o}(2m)$ is a Lie algebra isomorphism.

Applying Proposition \ref{compact case}, for $l=2m$,  we see that there exists a $g\in \mathfrak{o}(2m)$ such that 
\begin{equation}\label{Omega}
 g \Omega g^{-1}=\tilde{\Omega}=a_1J\oplus \cdots \oplus a_mJ. 
\end{equation}

The matrix $\tilde{\Omega}$ represents the Killing vector field $\tilde{W}=\phi_*W$, where $\phi$ is the map which corresponds to the orthogonal matrix $g$.
According to Theorem \ref{Theorem 1}, $\tilde{W}$ has the  form $\tilde{Q}{\bf x}+\tilde{C} +({\bf x}\cdot \tilde{C}) {\bf x}$ with respect to the projective coordinates ${\bf x}$ which parametrize the eastern hemisphere.
Comparing the matrix 
\[\begin{pmatrix}0 & \tilde{C}^t \cr - \tilde{C} & -\tilde{Q}\end{pmatrix}\]
of $\tilde{W}$ with $\tilde{\Omega}$ given in (\ref{Omega}), we conclude that  
\begin{equation}\label{equation 2.1}
\tilde{C}^{t}=(a_1, 0, \cdots, 0)
\end{equation}
and
\begin{equation*}
-\tilde{Q}=0\oplus a_2J \oplus \cdots \oplus a_mJ.
\end{equation*}

From the equation (\ref{equation 1.9}), we have
\begin{eqnarray}\label{equation }
	\tilde{Q}_{jr}{\tilde{Q}^j}_s+\tilde{C}_r\tilde{C}_s&=&K\delta_{rs},     \label{equation 7.3}     \\
         \tilde{Q}_{jr}\tilde{C}^j&=&0,     \label{equation 7.4}\\
        \tilde{C} \cdot \tilde{C}&=&K  \label{equation 7.5}
\end{eqnarray}
and therefore  we get $\tilde{C}_1=a_1$ and $\tilde{C}_2=\cdots =\tilde{C}_{2m-1}=0$ from (\ref{equation 2.1}).
Substituting these in (\ref{equation 7.5}), we get
   $a_1= \sqrt{K}$.
Remark the equation (\ref{equation 7.4}) identically holds.
We have $\tilde{C}_1\tilde{C}_1=K$
and  $\tilde{Q}_{j (2r-2)}{\tilde{Q}^j}_{(2r-2)}=\tilde{Q}_{j (2r-1)}{\tilde{Q}^j}_{(2r-1)}=(a_r)^2$ $(r=2, \cdots, m)$.
Substituting the above equations in (\ref{equation 7.3}), we get
\[ (a_2)^2=(a_3)^2=\cdots =K,\]
that is, 
\begin{equation*}
 a_2=a_3=\cdots =a_m=\sqrt{K}.
\end{equation*}

Therefore, it follows
\begin{theorem}
The moduli space $\mathcal{M}_{\mathcal{K}}$ for $(2m-1)$-dimensional   Kropina metrics of constant flag curvature $K(> 0)$ consists of a single  point 
\[ (a_1, a_2, \cdots,  a_m)=(\sqrt{K}, \sqrt{K}, \cdots \sqrt{K}) \in \E^m,\]
i.e. there is only one pair $(h_{+},W)$, up to Riemannian isometry, that induces all CC-Kropina structures on $\Sph^{2m-1}$.
\end{theorem}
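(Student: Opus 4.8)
The plan is to reduce the computation of the moduli space $\mathcal{M}_{\mathcal K}$ to a finite-dimensional problem about adjoint orbits, and then to show that the unit-length constraint collapses every geometrically admissible orbit to a single point. I would first recall, from the Lie group formalism of Section \ref{section 6} together with Lemma \ref{Lemma 2.1}, that two CC-Kropina structures on $\Sph^{2m-1}$ of the same curvature $K>0$ are conic isometric exactly when their defining unit Killing fields $W$ are related by the push-forward action of $Isom_{h_+}(\Sph^{2m-1})=O(2m)$, which in matrix terms is the adjoint action $\Omega\mapsto g\Omega g^{-1}$ of $O(2m)$ on $\mathfrak{o}(2m)$. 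Consequently $\mathcal{M}_{\mathcal K}$ is the set of those adjoint orbits whose representatives $\Omega=\begin{pmatrix}0 & C^t \\ -C & -Q\end{pmatrix}$ arise from a genuine unit Killing field, i.e. satisfy the constraints \eqref{equation 1.9}.

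Next I would invoke Proposition \ref{compact case} to select, in each orbit, the normal form $\tilde{\Omega}=a_1 J\oplus\cdots\oplus a_m J$ with $a_1\ge\cdots\ge a_m\ge 0$. Reading off the blocks against $\begin{pmatrix}0 & \tilde C^t \\ -\tilde C & -\tilde Q\end{pmatrix}$ gives $\tilde C^t=(a_1,0,\dots,0)$ as in \eqref{equation 2.1} and $-\tilde Q=0\oplus a_2 J\oplus\cdots\oplus a_m J$, so that the orbit, and hence the corresponding point of $\mathcal{M}_{\mathcal K}$, is parametrized by the ordered tuple $(a_1,\dots,a_m)$. At this stage the moduli space embeds into the Weyl chamber $\{a_1\ge\cdots\ge a_m\ge 0\}\subset\E^m$, and everything reduces to determining which tuples are admissible.

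The crucial and most rigid step is to impose the unit-length equations \eqref{equation 1.9} on this normal form. The equation $\tilde C\cdot\tilde C=K$, that is \eqref{equation 7.5}, immediately forces $a_1=\sqrt K$, while $\tilde Q\tilde C=0$ (equation \eqref{equation 7.4}) holds automatically, since $\tilde Q$ has vanishing first row and column. For the remaining equation \eqref{equation 7.3} I would use $J^2=-I_2$ to compute that $-\tilde Q^2+\tilde C\tilde C^t$ is the block-diagonal matrix $\mathrm{diag}(a_1^2,a_2^2,a_2^2,\dots,a_m^2,a_m^2)$; setting this equal to $K\,I_{2m-1}$ yields $a_\ell^2=K$ for every $\ell$. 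Hence $a_1=\cdots=a_m=\sqrt K$ and the admissible orbit is unique, which is precisely the asserted single point of $\mathcal{M}_{\mathcal K}$.

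I expect the only real obstacle to be bookkeeping: keeping the index conventions consistent between the $2m\times 2m$ matrix $\Omega$ and the $(2m-1)$-dimensional data $(Q,C)$ that enter \eqref{equation 1.9}, and checking that the off-diagonal entries of $-\tilde Q^2+\tilde C\tilde C^t$ vanish identically so that \eqref{equation 7.3} is met for $r\neq s$ as well. Once this is settled, the conclusion that $\mathcal{M}_{\mathcal K}$ is the single point $(\sqrt K,\dots,\sqrt K)\in\E^m$ follows at once, and it makes explicit the expected rigidity phenomenon: unlike the Randers case studied in \cite{BRS}, the unit-length normalization of the wind $W$ leaves no residual moduli.
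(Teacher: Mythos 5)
Your proposal is correct and follows essentially the same route as the paper: reduce conic isometry classes to adjoint orbits of $O(2m)$ on $\mathfrak{o}(2m)$, normalize via the skew-symmetric normal form of Proposition \ref{compact case}, and then impose the constraints \eqref{equation 1.9} to force $a_1=\cdots=a_m=\sqrt{K}$. Your only deviation is cosmetic---you verify \eqref{equation 7.3} by the compact matrix identity $-\tilde{Q}^2+\tilde{C}\tilde{C}^t=K I_{2m-1}$ using $J^2=-I_2$, where the paper checks the same equality entrywise---so the argument matches the paper's proof.
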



\subsection{The Euclidean space.}

It is known that $E(n):=Isom_{h_{0}}(\E^{n})$  consists of rotations, reflections, and translations.
 
From Proposition \ref{Proposition 1.2} it follows that the unit Killing vector fields on $\E^{n}$ can be written as $W=C^i(\partial/\partial x^i)$, where $(C^i)$ is a unit vector.

If  $(C_1^i)$ and $(C_2^i)$ are two such unit vectors, then there exists a matrix $g\in SO(n-1)$ such that $(C_1^i)^t=(C_2^i)^t g$.
Hence,  $G$-orbit of Killing vector fields on $\E^n$ is a single point. 
Therefore, we get

\begin{theorem}
The moduli space $\mathcal{M}_{\mathcal{K}}$ for $\E^n$   Kropina metrics of constant flag curvature $K=0$ consists of a single  point 
\[ (a_1, a_2, \cdots,  a_n)=(1, 0, \cdots 0) \in \E^n.\]
\end{theorem}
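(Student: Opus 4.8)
The plan is to realize $\mathcal{M}_{\mathcal{K}}$ in the flat case as the orbit space of the admissible navigation data under the Euclidean isometry group, and then to show that this action is transitive. First I would invoke Proposition \ref{Proposition 6.3}, which identifies every CC-Kropina structure on $\E^n$ with navigation data $(h_0, W)$ in which $h_0 = (\delta_{ij})$ is the flat metric and $W = C^i(\partial/\partial x^i)$ is a \emph{constant} vector field whose coefficient row $(C^i)$ satisfies $\sum_i (C^i)^2 = 1$; that is, with a point of $\Sph^{n-1}$.

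Next I would pin down the equivalence relation coming from conic isometry. By Lemma \ref{Lemma 2.1}, two structures built from $(h_0, W_1)$ and $(h_0, W_2)$ are conic isometric precisely when there is a diffeomorphism $\phi$ with $\phi^* h_0 = h_0$ and $\phi_* W_1 = W_2$; the first condition forces $\phi \in E(n) = Isom_{h_0}(\E^n)$. Writing such a $\phi$ as $x \mapsto g x + v$ with $g \in O(n)$ and $v \in \R^n$ a translation, the differential $d\phi_x = g$ is the constant linear map, so the push-forward of a constant field is $(\phi_* W) = g C$, again constant, with the translation $v$ playing no role. Thus the moduli problem reduces to the orbit space of the linear action $C \mapsto gC$ of $O(n)$ on the constant unit vectors $(C^i) \in \Sph^{n-1}$, which is the Euclidean analogue of the adjoint-orbit computation carried out on the sphere with Proposition \ref{compact case}.

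Finally I would conclude by transitivity. Since $O(n)$ (indeed $SO(n)$ for $n \ge 2$) acts transitively on the unit sphere $\Sph^{n-1}$, any unit coefficient vector can be rotated to the distinguished normal form $(1, 0, \dots, 0)$; hence all CC-Kropina structures on $\E^n$ lie in a single orbit and $\mathcal{M}_{\mathcal{K}}$ collapses to the one point labelled $(a_1, \dots, a_n) = (1, 0, \dots, 0)$. The only step that requires care is verifying that $\phi_* W$ for $\phi \in E(n)$ reduces to the orthogonal action $C \mapsto gC$ on constant fields, so that the translational degrees of freedom genuinely drop out; once this is checked, the statement is immediate from the elementary transitivity of the orthogonal group on the sphere, and no deeper obstacle arises.
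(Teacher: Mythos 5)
Your proposal is correct and follows essentially the same route as the paper: identify unit Killing fields on $\E^n$ with constant unit vectors via Proposition \ref{Proposition 6.3}, then observe that the isometry group acts transitively on these, collapsing the moduli space to a single point. In fact your version is somewhat more careful than the paper's, since you explicitly verify via Lemma \ref{Lemma 2.1} that conic isometry classes are exactly $E(n)$-orbits and that the translational part of an isometry acts trivially on constant fields, whereas the paper asserts the orbit statement directly (with a typographical slip, writing $SO(n-1)$ where $SO(n)$ is meant).
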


\section{Projectively flat U-Kropina metrics.}\label{section 7}

\subsection{Projectively flatness conditions for U-Kropina metrics.}
Let $(M, F)$ and $(M, \overline{F})$ be two classical Finsler spaces.  The Finsler metric $F$ is said to be projective to $\overline{F}$ if any geodesic of $(M, F)$ coincides with a geodesic of $(M, \overline{F})$ as a set of points and vice versa. Furthermore, if $(M, \overline{F})$ is a locally Minkowski space, the Finsler space $(M, F)$ is said to be projectively flat.

For a conic Finsler metric
we define
\begin{definition}\label{Definition 7.2}
 A Finsler space is projectively flat if and only if the Finsler space is with rectilinear extremals. 
\end{definition} 
 
Therefore, $(M, F)$ is projectively flat if and only if it satisfies the equations
\[F_{x^ry^j}y^r-F_{x^j}=0 \hspace{0.1in}(Hamel's \hspace{0.1in} relation).\]
In this case, $G^i(x, y)=Py^i$, where $P(x, y)$ is given by $P=(1/2F)F_{x^i}y^i$, and $y^i=dx^i/ds \in A_x$.

Necessary and sufficient conditions for a Kropina metric to be projectively flat were obtained by M. Matsumoto in \cite{M1}. 

By putting
\begin{eqnarray*}
	\texttt{R}_{ij}:=\frac{W_{i||j}+W_{j||i}}{2}, \hspace{0.1in}
	  \texttt{S}_{ij}:=\frac{W_{i||j}-W_{j||i}}{2}, \hspace{0.1in}
	&&{\texttt{R}^i}_j:=h^{ir}\texttt{R}_{rj},\hspace{0.1in}
        {\texttt{S}^i}_j:=h^{ir}\texttt{S}_{rj}\\	
	\texttt{R}_{i}:=W^r\texttt{R}_{ri},    \hspace{0.1in}
	 \texttt{S}_{i}:=W^r\texttt{S}_{ri},    \hspace{0.1in}
	&&\texttt{R}^{i}:=h^{ir}\texttt{R}_{r}, \hspace{0.1in}
       \texttt{S}^{i}:=h^{ir}\texttt{S}_{r},
\end{eqnarray*}
the main result in  \cite{M1} reads in terms of $(h,W)$:

\begin{lemma}{\rm (\cite{M1})}\label{lemma 8.1}
Let $(M, F)$ be a Kropina space induced by the navigation data $(h,W)$.
Then $F$ is projectively flat if and only if $W_i$ satisfies   $\texttt{S}_{ij}   =W_i\texttt{S}_j  -W_j\texttt{S}_i$ and the space is covered by coordinate neighborhoods in which there exist functions $\mu_i(x)$ satisfying
\begin{equation*}\label{equation 7.28}
 {{^h \gamma_j}^i}_k ={\delta^i}_j\mu_k  +{\delta^i}_k\mu_j      -\texttt{S}^i h_{jk}-W^i \texttt{R}_{jk}. 
\end{equation*}
 \end{lemma}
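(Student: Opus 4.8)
The plan is to read Lemma~\ref{lemma 8.1} as a \emph{change of variables}, not as a fresh variational computation. Matsumoto's criterion in \cite{M1} is the reduction of the Hamel relation $F_{x^ry^j}y^r-F_{x^j}=0$ (recorded just above) to two algebraic conditions on the Levi-Civita data of the background Riemannian metric $a$: writing $b_{i;j}$ for the $a$-covariant derivative of $b$ and setting $r_{ij}:=\tfrac12(b_{i;j}+b_{j;i})$, $s_{ij}:=\tfrac12(b_{i;j}-b_{j;i})$, he obtains one antisymmetric condition on $s_{ij}$ and one Christoffel-type symmetric condition relating ${{^a \gamma_j}^i}_k$ to $r_{ij}$, $s_{ij}$ and $b_i$. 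My task is to transport these two conditions across the conformal rescaling $h_{ij}=e^{k}a_{ij}$ together with $W_i=\tfrac12 e^{k}b_i$ and $k=\log(4/b^2)$ from \eqref{2.5}--\eqref{2.5'}.

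First I would set up the two dictionaries needed for the substitution. Writing $k_j:=\partial k/\partial x^j$ and $k^i:=a^{ir}k_r$, the conformal transformation law for the connection reads
\[
{{^h \gamma_j}^i}_k={{^a \gamma_j}^i}_k+\tfrac12\bigl(\delta^i_j k_k+\delta^i_k k_j-a_{jk}k^i\bigr),
\]
which already exhibits the pure-gradient terms $\tfrac12 k_k$, $\tfrac12 k_j$ that will be hidden inside the free functions $\mu_i$. A direct computation of the $h$-covariant derivative of $W$ then gives
\[
W_{i||j}=\tfrac12 e^{k}b_{i;j}+\tfrac14 e^{k}\bigl(k_j b_i-k_i b_j+a_{ij}(k\!\cdot\! b)\bigr),
\]
where $(k\!\cdot\! b):=a^{rs}k_r b_s$. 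Symmetrizing and antisymmetrizing kills, respectively, the antisymmetric and the symmetric $k$-terms, leaving the clean proportionalities $\texttt{R}_{ij}=\tfrac12 e^{k}r_{ij}+\tfrac14 h_{ij}(k\!\cdot\! b)$ and $\texttt{S}_{ij}=\tfrac12 e^{k}s_{ij}+\tfrac14 e^{k}(k_j b_i-k_i b_j)$, together with their $W^r=\tfrac12 b^r$ contractions $\texttt{R}_i$ and $\texttt{S}_i$.

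With these dictionaries I would substitute into Matsumoto's two conditions. The antisymmetric condition on $s_{ij}$ should reorganize into the stated $\texttt{S}_{ij}=W_i\texttt{S}_j-W_j\texttt{S}_i$, the antisymmetric $k$-correction $k_j b_i-k_i b_j$ being exactly what is needed to convert the $a$-contraction $b^r s_{ri}$ into the $h$-contraction $\texttt{S}_i$. For the symmetric condition I would feed the conformal law for ${{^h \gamma_j}^i}_k$, together with the formulas for $\texttt{R}_{jk}$ and $\texttt{S}^i$, into Matsumoto's Christoffel identity, collect every term of the pure-gradient shape $\delta^i_j(\cdot)+\delta^i_k(\cdot)$ into a single covector field $\mu_k$, and verify that the residue assembles precisely as $-\texttt{S}^i h_{jk}-W^i\texttt{R}_{jk}$.

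The main obstacle I anticipate is exactly this bookkeeping of the conformal gradient $k_i$: one must confirm that every surviving occurrence of $k_i$ is either of pure-gradient type (hence absorbable into $\mu_i$) or recombines with the $r$-, $s$-terms, rather than producing a spurious independent constraint. The key that makes this work is the unit-length normalization $h^{ij}W_iW_j=1$, equivalently $b^2=4e^{-k}$; differentiating $k=\log(4/b^2)$ expresses $k_i$ as $-\tfrac12 e^{k}(r_i+s_i)$ in terms of the $a$-contractions $r_i:=b^r r_{ri}$, $s_i:=b^r s_{ri}$, so that the apparently extra gradient contributions are not free but are forced to recombine with the connection terms. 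Tracking this identity faithfully is the heart of the argument.
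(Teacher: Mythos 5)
The paper offers no proof of this lemma at all: it is imported directly from Matsumoto \cite{M1} and simply restated through the navigation dictionary \eqref{2.5}--\eqref{2.5'}, and your proposal carries out precisely that implicit change of variables. Your dictionaries are correct as stated (in particular $W_{i||j}=\tfrac12 e^{k}b_{i;j}+\tfrac14 e^{k}\bigl(k_jb_i-k_ib_j+a_{ij}(k\cdot b)\bigr)$ and $k_i=-\tfrac12 e^{k}(r_i+s_i)$, under which the antisymmetric $k$-terms cancel on both sides and the first condition reduces exactly to Matsumoto's $b^2s_{ij}=b_is_j-b_js_i$), so your argument is sound and is essentially the route the paper itself relies on.
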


We can give now 
\begin{theorem}\label{proj flat charact}
Let $(M, F=\alpha^2/\beta)$ be a globally defined UK-Kropina space induced by the navigation data $(h,W)$. 

Then,  $F$ is projectively flat if and only if $W$ is parallel on the Riemannian space $(M, h)$ and $(M, h)$ is projectively flat.
\end{theorem}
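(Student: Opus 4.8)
The plan is to reduce the whole statement to Matsumoto's criterion (Lemma \ref{lemma 8.1}) by exploiting the two defining features of a UK-Kropina metric: that $W$ is Killing and that $W$ has unit length. First I would record the algebraic consequences of these hypotheses for the tensors $\texttt{R}_{ij}$, $\texttt{S}_{ij}$, $\texttt{R}_i$, $\texttt{S}_i$ that enter Lemma \ref{lemma 8.1}. Because $W$ is Killing, $W_{i||j}+W_{j||i}=0$, so the symmetric part vanishes identically, $\texttt{R}_{ij}=0$, whence $\texttt{R}_i=W^r\texttt{R}_{ri}=0$ and $\texttt{R}^i=0$; moreover $\texttt{S}_{ij}=W_{i||j}$. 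The unit-length hypothesis enters through the computation $\texttt{S}_i=W^r\texttt{S}_{ri}=W^rW_{r||i}=\frac{1}{2}(h^{rs}W_rW_s)_{||i}=\frac{1}{2}(|W|^2)_{||i}=0$, using metric compatibility and $|W|\equiv 1$; hence $\texttt{S}^i=0$ as well.

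With these vanishings in hand, I would substitute into the two conditions of Lemma \ref{lemma 8.1}. The first condition $\texttt{S}_{ij}=W_i\texttt{S}_j-W_j\texttt{S}_i$ collapses, since its right-hand side is now zero, to $\texttt{S}_{ij}=0$, i.e. $W_{i||j}=0$, which is exactly the parallelism of $W$ with respect to the Levi-Civita connection of $h$. The second condition, after deleting the terms $\texttt{S}^ih_{jk}$ and $W^i\texttt{R}_{jk}$ (both zero), becomes
\[ {{^h \gamma_j}^i}_k={\delta^i}_j\mu_k+{\delta^i}_k\mu_j \]
for suitable local functions $\mu_i$. This is precisely the classical condition for the $h$-geodesics to be rectilinear in the given coordinates, that is, for the Riemannian space $(M,h)$ to be projectively flat in the sense of Definition \ref{Definition 7.2} (Beltrami).

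This yields both implications simultaneously. For the forward direction, if $F$ is projectively flat then Lemma \ref{lemma 8.1} supplies both reduced conditions, giving at once that $W$ is parallel and that $(M,h)$ is projectively flat. For the converse, assuming $W$ parallel makes $\texttt{S}_{ij}=0$, so the first condition of Lemma \ref{lemma 8.1} holds trivially (both sides vanish); assuming $(M,h)$ projectively flat furnishes functions $\mu_i$ with ${{^h \gamma_j}^i}_k={\delta^i}_j\mu_k+{\delta^i}_k\mu_j$, and since $\texttt{S}^i=\texttt{R}_{jk}=0$ these same $\mu_i$ verify the full second condition, so $F$ is projectively flat.

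The main obstacle, and the step deserving the most care, is the unit-length computation $\texttt{S}_i=0$: this is the one place where the hypothesis $|W|=1$ (rather than merely $W$ Killing) is essential, and it is exactly what forces $W$ to be parallel once projective flatness is imposed. I would also take care to justify that the reduced second condition is genuinely equivalent to the Riemannian projective flatness of $(M,h)$, appealing to the characterization of metrics with rectilinear extremals rather than re-deriving it, so that the conclusion matches the statement of the theorem.
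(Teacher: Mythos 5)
Your proposal is correct and follows essentially the same route as the paper: both reduce the statement to Matsumoto's criterion (Lemma \ref{lemma 8.1}) via the vanishings $\texttt{R}_{ij}=0$ and $\texttt{S}_i=0$ forced by the Killing and unit-length hypotheses, after which the two conditions of the lemma collapse to $W_{i||j}=0$ and ${{^h \gamma_j}^i}_k={\delta^i}_j\mu_k+{\delta^i}_k\mu_j$, giving both implications at once. In fact you supply one detail the paper leaves implicit, namely the explicit computation $\texttt{S}_i=W^rW_{r||i}=\frac{1}{2}(|W|^2)_{||i}=0$, which is a welcome clarification rather than a deviation.
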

\begin{proof}
We consider an $n(\ge2)$-dimensional  globally defined UK-Kropina space $(M, \alpha^2/\beta)$ and let $h$ and $W$ be a Riemannian metric and a unit Killing vector field defined on it by (\ref{2.5})  and (\ref{2.5'}), respectively. Then, we have $\texttt{R}_{ij}=0$ and $\texttt{S}_i=0$.
 
Suppose that $(M, \alpha^2/\beta)$ is projectively flat. From Lemma \ref{lemma 8.1}, we have $\texttt{S}_{ij}=0$ and $ {{^h \gamma_j}^i}_k ={\delta^i}_j\mu_k  +{\delta^i}_k\mu_j$.
From the first formula we have $W_{i||j}=0$, that is, $W$ is parallel. The second one means that the Riemannian space $(M, h)$ is projectively flat. 

Conversely, suppose that $W$ is parallel on $(M, h)$ and the Riemannian space $(M, h)$ is projectively flat. From the first assumption, we have $W_{i||j}=0$. So, we get $\texttt{R}_{ij}=\texttt{S}_{ij}=\texttt{S}_i=0$. Hence, the two conditions in Lemma \ref{lemma 8.1} hold good and therefore the  Kropina space $(M, \alpha^2/\beta)$ is projectively flat.

$\qedd$
\end{proof}

Using Beltrami's theorem we obtain
\begin{corollary}\label{cor 8.4}
Let $(M, \alpha^2/\beta)$ be an $n(\ge 2)$-dimensional Kropina space of constant flag curvature $K(\ge 0)$ induced by the navigation data $(M,h)$.

Then, the Kropina space $(M, \alpha^2/\beta)$   is projectively flat if and only if the vector field $W$ is parallel on $(M, h)$.
\end{corollary}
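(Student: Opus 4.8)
The plan is to combine the projective-flatness characterization of Theorem \ref{proj flat charact} with the constant-curvature hypothesis, invoking Beltrami's theorem to discharge one of the two required conditions automatically. The corollary is essentially a bookkeeping of results already established, so the main work is checking that each invoked theorem applies under the stated hypotheses.

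First I would use the assumption that $(M,\alpha^2/\beta)$ has constant flag curvature $K$. By Theorem \ref{Theorem 3.2}, this forces $W$ to be a unit-length Killing vector field on $(M,h)$ and forces $(M,h)$ to be a Riemannian space of constant sectional curvature $K$. In particular the metric is then a genuine UK-Kropina metric, so the hypotheses of Theorem \ref{proj flat charact} are satisfied; this legitimizes applying that theorem in what follows.

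Next I would apply Beltrami's theorem, which states that a Riemannian manifold is projectively flat if and only if it has constant sectional curvature. Since $(M,h)$ has constant sectional curvature $K$ by the previous step, it is automatically projectively flat. Consequently the second of the two conditions appearing in Theorem \ref{proj flat charact}, namely that $(M,h)$ be projectively flat, holds for free and imposes no further restriction.

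Finally, Theorem \ref{proj flat charact} asserts that $F$ is projectively flat if and only if \emph{both} $W$ is parallel on $(M,h)$ \emph{and} $(M,h)$ is projectively flat. As the latter condition is now automatic, the equivalence collapses to: $F$ is projectively flat if and only if $W$ is parallel on $(M,h)$, which is precisely the claim. I do not anticipate a genuine obstacle here; the one point requiring care is verifying that the constant-flag-curvature hypothesis really does trigger Theorem \ref{Theorem 3.2} (so that $W$ is bona fide unit Killing and $(M,h)$ is bona fide of constant sectional curvature), and this is exactly where the standing assumption $K\ge 0$—under which such structures are known to exist by Theorem \ref{Theorem 4.20}—plays its role.
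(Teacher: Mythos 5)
Your proposal is correct and matches the paper's own (very brief) argument: the paper derives the corollary from Theorem \ref{proj flat charact} precisely ``using Beltrami's theorem,'' with Theorem \ref{Theorem 3.2} supplying the constant sectional curvature of $(M,h)$, so the projective flatness of $(M,h)$ holds automatically and the equivalence collapses to the parallelism of $W$. Your extra remark on where $K\ge 0$ enters (global existence via Theorem \ref{Theorem 4.20}) is a harmless addition, not a divergence.
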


\begin{remark}
One can see that our Examples \ref{cylinder} and \ref{torus} are examples of projectively flat Kropina metrics of constant flag curvature $K=0$ on surfaces.
\end{remark}

More generally, we consider CC-Kropina metrics of arbitrary dimension.


\subsection{ The Euclidean case.} 
Since $h_{ij}=\delta_{ij}$ from $(0)$ of Theorem \ref{Theorem 1} it results that the covariant derivative on $(M, h)$, which is denoted by $(_{||})$,  is simply partial differentiation. Hence, the covariant derivative of $W_i(x)=\delta_{ir}W^r(x)=\delta_{ir}C^r$, where $(C^i)$ is a column vector of unit constant length, reads
\[W_{i||j}=\frac{\partial W_i}{\partial x^j}=0.\]

Therefore, from Corollary \ref{cor 8.4} it follows 

\begin{theorem}\label{Theorem 7.11}
The Kropina space $(\E^n, F)$  of constant flag curvature $K=0$ is projectively flat.
\end{theorem}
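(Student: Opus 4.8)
The plan is to show that the hypotheses of Corollary \ref{cor 8.4} are met in the strongest possible form, namely that $W$ is not merely Killing but actually parallel, so that projective flatness follows at once. The genuine analytic content — the equivalence between projective flatness and the parallelism of $W$ for a Kropina metric of constant flag curvature $K\ge 0$ — has already been established in Corollary \ref{cor 8.4} by means of Beltrami's theorem. Hence the only thing left to verify in the Euclidean case is that the navigation vector field $W$ is parallel.

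First I would invoke case $(0)$ of Theorem \ref{Theorem 1} (equivalently Proposition \ref{Proposition 6.3}): since $(\E^n, F)$ is a Kropina space of constant flag curvature $K=0$, its navigation data must consist of the flat metric $h_{ij}=\delta_{ij}$ together with a unit length Killing vector field of the explicit form $W^i(x)=C^i$, where $(C^i)$ is a constant unit column vector. This is the crucial structural input, pinning $W$ down to a constant vector field.

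Next I would compute the covariant derivative of $W$. Because $h_{ij}=\delta_{ij}$ is flat, every Christoffel symbol vanishes and the Levi-Civita connection reduces to ordinary partial differentiation; lowering indices via $W_i=\delta_{ir}C^r$ produces a constant covector, so that
\[
W_{i||j}=\frac{\partial W_i}{\partial x^j}=0,
\]
which is precisely the statement that $W$ is parallel on $(\E^n,h)$.

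Finally I would feed this into Corollary \ref{cor 8.4}: since $(\E^n,F)$ has constant flag curvature $K=0\ (\ge 0)$ and its navigation vector field $W$ is parallel, the corollary yields immediately that $(\E^n,F)$ is projectively flat. I do not expect any real obstacle here, since all the work is absorbed into Corollary \ref{cor 8.4}, and the Euclidean case is exactly the setting in which the parallelism of $W$ holds automatically. The only point requiring minor care is the bookkeeping of the index-lowering $W_i=\delta_{ir}W^r$ and the explicit appeal to the $K=0$ normal form supplied by Theorem \ref{Theorem 1}.
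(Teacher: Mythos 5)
Your proposal is correct and matches the paper's own proof essentially verbatim: both invoke case $(0)$ of Theorem \ref{Theorem 1} to get $W^i=C^i$ constant with $h_{ij}=\delta_{ij}$, observe that flatness reduces the covariant derivative to partial differentiation so $W_{i||j}=0$, and conclude via Corollary \ref{cor 8.4}. No gaps or differences worth noting.
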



\subsection{The spherical case.}
From Subsection \ref{sec.6.2.2}, we have
\begin{eqnarray*}
	W_{i||j}    &=&\frac{\partial W_i}{\partial x^j}+\frac{1}{1+\textbf{x}\cdot \textbf{x}}  (x_i W_j   +x_jW_i ).
\end{eqnarray*}
and
\begin{eqnarray*}
	W_i=\frac{Q_{ir}x^r+C_i}{K(1+\textbf{x}\cdot \textbf{x})},
\end{eqnarray*}
where
\begin{eqnarray*}
	Q_{jr}{Q^j}_s+C_rC_s=K\delta_{rs}, \hspace{0.2in}Q_{jr}C^j=0,  \hspace{0.2in}\textbf{C} \cdot \textbf{C}=K.
\end{eqnarray*}
From the above equations, we get
\begin{equation*}
\begin{split}
W_{i||j} &=-\frac{2x_j(Q_{ir}x^r+C_i)}{K(1+\textbf{x}\cdot \textbf{x})^2}+\frac{Q_{ij}}{K(1+\textbf{x}\cdot \textbf{x})}
                             +\frac{1}{1+\textbf{x}\cdot \textbf{x}}  (x_i \frac{Q_{jr}x^r+C_j}{K(1+\textbf{x}\cdot \textbf{x})}  +x_j\frac{Q_{ir}x^r+C_i}{K(1+\textbf{x}\cdot \textbf{x})} ) \\
           &=\frac{1}{K(1+\textbf{x}\cdot \textbf{x})^2}\bigg((1+\textbf{x}\cdot \textbf{x})Q_{ij}+x_i(Q_{jr}x^r+C_j)-x_j(Q_{ir}x^r+C_i)\bigg).
\end{split}
\end{equation*}
If we suppose that the $W$ is parallel, then we get $(Q_{ij})=O$ and $(C_i)=(0)$. This contradict the condition $\textbf{C} \cdot \textbf{C}=K$. Therefore we obtain

\begin{theorem}\label{Theorem 7.12}
The Kropina space $(\Sph^{2m-1}, F)$ of constant flag curvature $K>0$ can not be projectively flat.
\end{theorem}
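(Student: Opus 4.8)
The plan is to reduce projective flatness to a statement about the parallelism of the navigation vector field $W$, and then to show that parallelism is incompatible with the curvature normalization $\textbf{C}\cdot\textbf{C}=K>0$. Since $(\Sph^{2m-1},F)$ has constant flag curvature $K>0$, Corollary \ref{cor 8.4} applies directly and asserts that $F$ is projectively flat \emph{if and only if} $W$ is parallel with respect to the Levi-Civita connection of $h$. Thus it suffices to prove that no unit Killing vector field $W$ of the form classified in Theorem \ref{Theorem 1} can be parallel on the sphere.

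First I would invoke the explicit expression for the covariant derivative of $W$ in the projective coordinates established in Subsection \ref{sec.6.2.2}, namely
\[
W_{i||j}=\frac{1}{K(1+\textbf{x}\cdot\textbf{x})^2}\Bigl((1+\textbf{x}\cdot\textbf{x})Q_{ij}+x_i(Q_{jr}x^r+C_j)-x_j(Q_{ir}x^r+C_i)\Bigr).
\]
The hypothesis that $W$ is parallel means $W_{i||j}=0$, which forces the bracketed numerator to vanish identically as a polynomial in $\textbf{x}\in\E^{2m-1}$. Evaluating this identity at $\textbf{x}=0$ kills every term except $(1+\textbf{x}\cdot\textbf{x})Q_{ij}$, yielding at once $Q_{ij}=0$ for all $i,j$.

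Substituting $Q_{ij}=0$ back, the numerator collapses to $x_iC_j-x_jC_i$, which must still vanish for every $\textbf{x}$. Since the dimension $2m-1\ge 3$ provides at least two independent coordinate directions, choosing $\textbf{x}$ along a single coordinate axis forces $C_j=0$ for each $j$, i.e. $\textbf{C}=0$. This contradicts the third relation in \eqref{equation 1.9}, namely $\textbf{C}\cdot\textbf{C}=K>0$. Hence $W$ cannot be parallel, and by Corollary \ref{cor 8.4} the space $(\Sph^{2m-1},F)$ is not projectively flat.

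The argument is essentially a short computation resting on the already-derived formula for $W_{i||j}$, so I do not expect a genuine analytic obstacle. The only point meriting care is logical rather than computational: one must use the full equivalence in Corollary \ref{cor 8.4} (projective flatness $\Leftrightarrow$ parallelism of $W$ for constant-curvature Kropina metrics), so that the demonstrated \emph{failure} of parallelism genuinely rules out projective flatness, rather than merely being consistent with it.
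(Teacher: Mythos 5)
Your proposal is correct and follows essentially the same route as the paper: the paper likewise invokes the parallelism criterion of Corollary \ref{cor 8.4}, uses the same formula for $W_{i||j}$ in projective coordinates from Subsection \ref{sec.6.2.2}, and derives $Q_{ij}=0$, $C_i=0$, contradicting $\textbf{C}\cdot\textbf{C}=K>0$. Your only addition is spelling out the evaluation at $\textbf{x}=0$ and along coordinate axes, a detail the paper leaves implicit (and note that for ruling out projective flatness only the implication ``projectively flat $\Rightarrow$ $W$ parallel'' is actually needed, not the full equivalence).
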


\begin{remark}
We can compare now the Beltrami's Theorem for Riemannian manifolds with our findings. CC-Kropina metrics which are not flat can not be projectively flat, i.e. Beltrami's Theorem do not extend to the Finslerian setting. 

On the other hand, any flat CC-Kropina metrics must be projectively flat, that is, Beltrami's theorem applies to the case of Kropina structures, but only in the flat case.

\end{remark}

   Taking all these into account, Theorem  \ref{proj flat charact} implies

\begin{corollary}
Let $(M, F=\alpha^2/\beta)$ be a globally defined UK-Kropina space induced by the navigation data $(h,W)$.

Then,  $(M, F)$ is projectively flat if and only if the Riemannian space $(M, h)$ is flat and 
$W$ is parallel with respect to $h$.
\end{corollary}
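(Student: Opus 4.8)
The plan is to deduce this directly from Theorem \ref{proj flat charact} together with Beltrami's theorem for Riemannian manifolds. Theorem \ref{proj flat charact} already states that a UK-Kropina metric $F$ is projectively flat if and only if $W$ is parallel on $(M,h)$ and $(M,h)$ is itself projectively flat; moreover, since $F$ is of UK-Kropina type, $W$ is a unit length Killing field and hence automatically parallel by Lemma \ref{Lemma 4.4}. Thus the only substantive thing left to establish is that, in the presence of the parallel unit field $W$, the projective flatness of $(M,h)$ is equivalent to its flatness.

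For the forward implication I would argue as follows. Assuming $(M,F)$ projectively flat, Theorem \ref{proj flat charact} gives that $(M,h)$ is projectively flat and $W$ is parallel. By Beltrami's theorem a (locally) projectively flat Riemannian space has constant sectional curvature $K$, so it suffices to show $K=0$. Since $\nabla W=0$ we have $R(X,Y)W=0$ for all $X,Y$, whereas constant curvature $K$ forces $R(X,Y)W=K\bigl(h(Y,W)X-h(X,W)Y\bigr)$. Choosing $Y=W$ and any nonzero $X$ orthogonal to $W$ (which exists because $n\ge 2$ and $|W|=1$), the right-hand side reduces to $K\,X$; hence $K\,X=0$ and therefore $K=0$, i.e. $(M,h)$ is flat. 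This is precisely the equality case of the curvature estimate \eqref{curv_lemma} already exploited in Theorem \ref{Theorem 4.18}.

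For the converse, suppose $(M,h)$ is flat and $W$ parallel. A flat space has constant sectional curvature $0$ and is therefore projectively flat (by Beltrami's theorem, or directly since in normal coordinates its geodesics are straight lines). With $W$ parallel, the two hypotheses of Theorem \ref{proj flat charact} are satisfied, and we conclude that $(M,F)$ is projectively flat.

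The crux of the argument is the short curvature computation turning a parallel unit field on a constant-curvature background into the vanishing of $K$; the only points requiring a little care are that Beltrami's theorem is invoked locally (projective flatness being a local condition on the conic metric) and that the hypothesis $n\ge 2$ is genuinely used, namely to produce a nonzero vector orthogonal to $W$.
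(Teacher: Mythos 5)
Your proof is correct, and for the crucial step it takes a genuinely different route from the paper. Both arguments start from Theorem \ref{proj flat charact} and Beltrami's theorem, so everything reduces to showing that a projectively flat $(M,h)$ carrying a parallel unit vector field must be flat. The paper gets this from its classification machinery: after Beltrami, negative constant curvature is excluded by Theorem \ref{Theorem 4.18}, the positive case reduces (via Theorem \ref{Theorem 4.20}) to the odd spheres, and there the explicit projective-coordinate computation of $W_{i||j}$ preceding Theorem \ref{Theorem 7.12} shows that parallelism would force $Q=0$ and $C=0$, contradicting $\mathbf{C}\cdot\mathbf{C}=K$; the paper then assembles the corollary from Theorem \ref{proj flat charact}, Corollary \ref{cor 8.4} and Theorems \ref{Theorem 7.11}, \ref{Theorem 7.12}. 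Your invariant computation --- $\nabla W=0$ gives $R(X,Y)W=0$, while constant curvature gives $R(X,W)W=KX$ for $0\neq X\perp W$, whence $K=0$ --- replaces that case analysis in two lines, treats all signs of $K$ uniformly (no separate appeal to Theorem \ref{Theorem 4.18} for $K<0$), and is, as you note, exactly the equality case of \eqref{curv_lemma}. The converse direction agrees with the paper's. What your route buys is brevity and coordinate-freeness; what the paper's buys is that its exclusion of the spherical case falls out of computations it needs anyway for Theorems \ref{Theorem 7.11} and \ref{Theorem 7.12}.

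One sentence of your write-up is false and should be deleted, though it is not load-bearing: the claim that, $W$ being a unit Killing field, it is ``automatically parallel by Lemma \ref{Lemma 4.4}.'' Taken literally, the paper's statement of Lemma \ref{Lemma 4.4} does say this, but that statement is a mis-transcription of the Berestovskii--Nikonorov result: the correct equivalences for a Killing field of constant length involve $\nabla_W W=0$ (integral curves are geodesics), not full parallelism $\nabla W=0$. The Hopf field on $\Sph^{2m-1}$ is a unit Killing field that is not parallel --- precisely what the computation before Theorem \ref{Theorem 7.12} exhibits --- and if every unit Killing field were parallel, your own forward argument would force every UK-Kropina background to be flat, making the spherical CC-Kropina structures of Theorem \ref{Theorem 4.20} impossible. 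Fortunately your logical chain never uses the remark: in the forward direction you obtain the parallelism of $W$ from Theorem \ref{proj flat charact}, as you should, and in the converse it is a hypothesis. With that sentence removed, the proof is complete and correct.
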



\bigskip

\medskip

\begin{center}
Ryozo Yoshikawa

\bigskip
 Hachiman Technical High School, \\
	 	 5 Nishinosho-cho Hachiman   \\
	 	 523-0816 Japan       
	 
\medskip
{\tt ryozo@e-omi.ne.jp}

\bigskip

Sorin V. SABAU\\

\bigskip

Department of Mathematics\\
Tokai University\\
Sapporo, 005\,--\,8601 Japan

\medskip
{\tt sorin@tspirit.tokai-u.jp}
\end{center}


\begin{thebibliography}{MMMM}


\bibitem[AIM] {AIM} 
P. L. Antonelli, R. S. Ingarden and M. Matsumoto, 
{\it The theory of sprays and Finsler spaces with applications in physics and biology}, Kluwer Academic Publisher, 1993. 

\bibitem[BCS]{BCS}
     D.~Bao, S. S.~Chern, Z.~Shen,
     An Introduction to Riemann Finsler Geometry, Springer, GTM \textbf{200}, 
2000.

\bibitem[Ber]{Ber} 
M. Berger,  
 {\it Trois remarques sur les vari\'et\'es riemanniennes $\grave{a}$ courbure positive}, C. R. Acad. Sci. Paris S\'er. A-B 263(1966), 76-78.
 
\bibitem[BN1]{BN1}  
V. N. Berestovskii and Yu. G. Nikonorov, 
 {\it Killing vector field of constant length on Riemannian manifolds},  Sib. Math. J. {\bf 49 (3)} (2008), 395Ð407.

\bibitem[BN2] {BN2} 
V. N. Bernestovskii and Yu. G. Nikonorov, 
 {\it Clifford-Wolf homogeneous Riemannian manifold}, J. Diff. Geom.   82(2009) 467-500.


\bibitem[BRS] {BRS}
 D. Bao, C. Robles and Z. Shen,
   {\it Zermelo navigation on Riemannian manifolds}, J. Diff. Geom. 66(2004),   377-435.


\bibitem[C]{C}
C. Carath\'eodory, {\it Calculus of variations and partial differential equations of the first order}, (Translated by Robert B. Dean), AMS Chelsea Publishing, 2006 [Originally published 1935, Berlin].

\bibitem[Ca] {Ca} 
M. P. Do Carmo,
 {\it  Riemannian Geometry}, Birkh\"{a}user Boston, 1993. 

\bibitem[JS]{JS} M. A. Javaloyes and M. S\'anchez, {\it On the definition and examples of Finsler metrics}, arXiv. 1111.5066v1 [math. DG] (2011).
\bibitem[K] {K} Kropina V. K,  {\it  On projective two-dimensional Finsler spaces with special metric}, Trudy Sem. Vector. Tensor.    Anal. 11(1961), 277.

\bibitem[KL]{KL}
B. Kleiner and J. Lott, {\it Notes on Perelman's papers}, Geometry and Topology 12 (5) (2008),  2587Ð2855.




\bibitem[M1]{M1}  M. Matsumoto, {\it Projectively flat Finsler spaces with $(\alpha, \beta)-$metric}, Rep. Math. Phys., 30(1991), 15-20.

\bibitem[M2]{M2}  M. Matsumoto, {\it Finsler spaces of constant curvature with Kropina metric}, Tensor N. S., {\bf 50} (1991), 194-201.

\bibitem[R]{R}  Colleen  Robles,  {\it  Geodesics in Randers spaces of constant curvature},  Trans. Amer. Math. Soc. {\bf 359 (4)} (2007), 1633-1651.

\bibitem[Sh] {Sh1} Z. Shen,  {\it  Finsler metric with $K=0$ and $S=0$}, Canadian J. Math. {\bf 55} (2003), 112-132.

\bibitem[Shi]{Shi} Choko Shibata, {\it  On Finsler spaces with Kropina metric}, Reports on Math. Phys. 
 {\bf 13} (1978), 117-128.

\bibitem[V]{V}  Jason DeVito, {\it Curvature of Invariant Metrics on Compact Lie Groups}, Web site: www.math.upenn.edu/~devito/curv.pdf, 2003.

 \bibitem[YO1]{YO1} R. Yoshikawa and K. Okubo, {\it  Kropina spaces of constant curvature}, Tensor N.S., {\bf 68} (2007), 190-203.

\bibitem[YO2]{YO2} R. Yoshikawa and K. Okubo, {\it  Constant curvature conditions for Kropina spaces}, Balkan Journal of Geometry and Its Applications, Vol.17, No.1 (2012), 115-124.


\end{thebibliography}
\end{document}